 \numberwithin{equation}{section}
\newtheorem{prop}{Proposition}[section]
\newtheorem{lem}[prop]{Lemma}
\newtheorem{defi}{Definition}[section]
\newtheorem{thm}[prop]{Theorem}
\newtheorem{exam}{Example}[section]
\newtheorem{rem}{Remark}[section]
\newtheorem{qn}{Question}[section]
\begin{document}
\baselineskip=17pt
\title{Integer tile and Spectrality of Cantor-Moran measures with equidifferent digit sets}


\author{Sha Wu and Yingqing Xiao$^*$}
\address{Sha Wu, School of Mathematics, Hunan University, Changsha, Hunan, 410082, P.R. China}
\email{shaw0821@163.com}
\address{Yingqing Xiao, School of Mathematics and Hunan Province Key Lab of Intelligent Information Processing and Applied Mathematics, Hunan University, Changsha, 410082, P.R. China}
\email{ouxyq@hnu.edu.cn}

\date{\today}
\keywords {Cantor-Moran measures; Infinite convolution; Equidifferent digit sets; Spectral measure.}
\subjclass[2010]{Primary 28A25, 28A80; Secondary 42C05, 46C05.}
\thanks{ The research is supported in part by National Natural Science Foundation of China (Grant Nos. 12471073)
\\
$^*$Corresponding author.}
\begin{abstract}
Let  $\left\{b_{k}\right\}_{k=1}^{\infty}$ be a sequence of  integers with $|b_{k}|\geq2$ and $\left\{D_{k}\right\}_{k=1}^{\infty} $ be a sequence of equidifferent digit sets with $D_{k}=\left\{0,1, \cdots, N-1\right\}t_{k},$
where $N\geq2$ is a prime number and $\{t_{k}\}_{k=1}^{\infty}$ is bounded. In this paper, we  study the existence of the Cantor-Moran measure $\mu_{\{b_k\},\{D_k\}}$  and show that  $\mathbf{D}_k:=D_k\oplus b_{k} D_{k-1}\oplus b_{k}b_{k-1} D_{k-2}\oplus\cdots\oplus b_{k}b_{k-1}\cdots b_2D_{1}$ is an integer tile  for all $k\in\mathbb{N}^+$  if and only if $\mathbf{s}_i\neq\mathbf{s}_j$ for all $i\neq j\in\mathbb{N}^{+}$, where $\mathbf{s}_i$ is defined as the numbers of factor $N$ in $\frac{b_1b_2\cdots b_i}{Nt_i}$. Moreover, we prove that $\mathbf{D}_k$  being an integer tile for all $k\in\mathbb{N}^+$ is a necessary condition for the Cantor-Moran measure to be a spectral measure, and we  provide  an example to demonstrate  that it cannot become a sufficient condition. Furthermore, under some additional assumptions, we  establish that  the Cantor-Moran measure  to be a spectral measure   is equivalent to $\mathbf{D}_k$  being an integer tile for all $k\in\mathbb{N}^+$.
\end{abstract}
\maketitle
\section{\bf Introduction\label{sect.1}}
\subsection {\bf Cantor-Moran measures}
For a finite subset $ E \subset\mathbb{R}^{n}$, we define $\delta_E=\frac{1}{\#E}\sum_{e\in E}\delta_e$, where $\#E$  denotes  the cardinality of $E$ and $\delta_e$ is the Dirac point mass measure at $e$.
Let  $\left\{E_{k}\right\}_{k=1}^{\infty}$ be a sequence of finite subsets on  $\mathbb{R}^{n}$ and write
\begin{equation*}\label{eq(2.811-1)}
\mu_{k}=\delta_{E_1}\ast\delta_{E_2}
\ast\cdots\ast\delta_{E_k}
\end{equation*}
for each $k\geq1$, where $ \ast$ is the convolution sign. We say that $\mu_{k}$ converges weakly to $\mu $ if
$$\lim\limits_{k\rightarrow\infty}\int fd\mu_{k}=\int fd\mu$$
 for all $f\in C_{b}(\mathbb{R}^n)$, where  $C_{b}(\mathbb{R}^n)$ denotes the set of all bounded continuous functions on  $\mathbb{R}^n$. If  $\mu_{k}$  converges weakly to a Borel probability measure, then the weak limit
is called the infinite convolution of $\delta_{k}$ and denoted  by
 \begin{equation*}
\mu=\delta_{E_1}\ast\delta_{E_2}\ast\delta_{E_3}
\ast\cdots.
\end{equation*}
 A natural subsequent question is the following.
\begin{qn}
Under what conditions does  $\mu_{k}$  converge weakly to $\mu$ ?
\end{qn}
Using some results on Fourier transforms, Jessen and Wintner \cite{JW1935} develop a general theory of infinite convolutions and in particular their convergence theory. Convergence theory of infinite convolutions is completed at  \cite[Theorem 34]{JW1935}, where it is shown that the convergence problem of infinite convolutions is identical with the convergence problem of infinite series the terms of which are independent random variables as considered by Kolmogoroff \cite{K1928}. Based on this convergence  theory of infinite convolutions, Li {\it et al.} \cite{LMW-2022} gave a sufficient and necessary condition for the
existence of infinite convolutions when   $\left\{E_{k}\right\}_{k=1}^{\infty}\subset\mathbb{R}^{n}_{+}$, where $\mathbb{R}^{n}_{+}=[0, +\infty)^{n}$. Moreover, for the general case that $\left\{E_{k}\right\}_{k=1}^{\infty}\subset\mathbb{R}^{n}$, they also provided a sufficient condition. In this paper, we will introduce a special class of infinite convolution and study its convergence.

Let  $\left\{b_{k}\right\}_{k=1}^{\infty}$ be a sequence of  integers with  $| b_{k}|\geq2$ and $\left\{D_{k}\right\}_{k=1}^{\infty}$ be a sequence of  digit sets with $D_{k}\subset \mathbb{Z}$. Define
\begin{equation*}\label{eq(2.811)}
\mu_{k}=\delta_{b_{1}^{-1}D_1}\ast\delta_{b_{1}^{-1}b_{2}^{-1}D_2}
\ast\delta_{b_{1}^{-1}b_{2}^{-1} b_{3}^{-1}D_3}\ast\cdots\ast\delta_{b_{1}^{-1}b_{2}^{-1}\cdots b_{k}^{-1}D_k}
\end{equation*}
for $k\geq1$.
If  $\mu_{k}$ converges weakly to a Borel probability measure, then the weak limit is called Cantor-Moran  measure and denoted by $\mu_{\{b_k\},\{D_k\}}$. Moreover, the Cantor-Moran measure $\mu_{\{b_k\},\{D_k\}}$  is supported on  the set
\begin{equation*}
 K(b_{k}, D_{k}) =\left\{\sum_{k=1}^{\infty} \frac{ d_{k} }{b_{1} b_{2} \cdots b_{k}} : d_{k}\in D_{k},~k\geq1\right\},
 \end{equation*}
where the set $K(b_{k}, D_{k}) $ is usually called a Cantor-Moran set. In particular, in the case of  $b=b_{k}$ and $D=D_{k}$, we say that   $\mu_{b, D}$ is a self-similar measure and $K(b, D)$ is a self-similar  set.
Since then, the research related to Cantor-Moran measure has become an active research field, see \cite{AH14,DHL2013,FY97,HH17,LD2020,SS00,Tang2018,WDL2018}. Some researchers have also noticed the existence problem of Cantor-Moran measure and have given some related results. Recently, An {\it et al.}\cite{ALZ24} showed that the  Cantor-Moran measure $\mu_{\{b_k\},\{D'_k\}}$ exists if and only if $\sum_{k=1}^{\infty}\frac{N_{k}}{b_{1} b_{2} \cdots b_{k}}<\infty$, where  $\mu_{\{b_k\},\{D'_k\}}$ is generated by  an integer sequence $\{b_{k}\}_{k=1}^{\infty}$ with $b_k\geq 2$ and a sequence  of consecutive digit sets $\{D'_{k}:= \{0,1,\cdots,N_{k}-1\}\}_{k=1}^{\infty}$  with $N_{k}\geq2$.
The first purpose of this paper is to study the existence of Cantor-Moran measure for equidifferent digit sets,  which is a further study of the results of An {\it et al.}\cite{ALZ24}. We can state our first result as following.
\begin{thm}\label{th(1.1)}
Given a sequence of integer   $\{b_k\}_{k=1}^{\infty}$ with $|b_{k}|\geq 2$ and a sequence of integer digit sets  $\{D_{k}\}_{k=1}^{\infty}$, where $D_{k}=\left\{0,1, \cdots, N_{k}-1\right\}t_{k}$ with $N_{k}\geq 2$ and $|t_{k}|\geq 1$, if $\sum_{k=1}^{\infty}|\frac{N_{k}t_{k}}{b_{1} b_{2} \cdots b_{k}}|<\infty$, then $$
\mu_{k}=\delta_{b_{1}^{-1}D_1}\ast\delta_{b_{1}^{-1}b_{2}^{-1}D_2}
\ast\delta_{b_{1}^{-1}b_{2}^{-1} b_{3}^{-1}D_3}\ast\cdots\ast\delta_{b_{1}^{-1}b_{2}^{-1}\cdots b_{k}^{-1}D_k}
$$ converges weakly to a Borel probability measure. Moreover, if $b_{k}\geq2$ and $t_{k}\geq1$,  then the converse is also true.
\end{thm}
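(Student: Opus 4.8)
The plan is to realize $\mu_k$ as the distribution of a partial-sum random variable and to read off convergence from the almost-sure behaviour of the associated random series. Write $B_k=b_1b_2\cdots b_k$ and let $(\xi_k)_{k\ge 1}$ be independent, with $\xi_k$ uniform on $\{0,1,\dots,N_k-1\}$, so that $Y_k:=\xi_k t_k/B_k$ has law $\delta_{B_k^{-1}D_k}$ and $X_k:=\sum_{j=1}^k Y_j$ has law $\mu_k$. For the sufficiency I would argue directly: since $|Y_j|\le (N_j-1)|t_j|/|B_j|\le |N_j t_j/B_j|$ for \emph{every} realisation, the hypothesis $\sum_j|N_j t_j/B_j|<\infty$ forces $\sum_j|Y_j|<\infty$ surely, so $X_k$ converges surely to a finite random variable $X=\sum_{j\ge 1}Y_j$. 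Sure convergence implies convergence in distribution, whence $\mu_k$ converges weakly to the law of $X$, a Borel probability measure. This step is routine and uses no positivity.

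For the converse I would assume $b_k\ge 2$ and $t_k\ge 1$, so each $Y_j\ge 0$ and the partial sums are non-decreasing: $X_k\uparrow X\in[0,\infty]$ surely. Writing $m_j:=N_j t_j/B_j$, the family $\mu_k$ converges weakly to a probability measure exactly when no mass escapes to infinity, i.e.\ when $\mathbb{P}(X<\infty)=1$; since $\{X=\infty\}$ is a tail event, Kolmogorov's zero-one law makes this a genuine dichotomy. The goal thus becomes $\mathbb{P}(X<\infty)=1\iff\sum_j m_j<\infty$. I would invoke the standard criterion for sums of independent non-negative variables, $X<\infty$ a.s.\ $\iff\sum_j\mathbb{E}[\min(Y_j,1)]<\infty$ (the ``if'' is Tonelli; the ``only if'' follows from a Chebyshev estimate using $\operatorname{Var}(\min(Y_j,1))\le\mathbb{E}[\min(Y_j,1)]$ together with monotonicity, or equivalently from Kolmogorov's three-series theorem). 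Alternatively one may quote the $\mathbb{R}_+$ existence criterion of Li \emph{et al.}~\cite{LMW-2022} applied to $E_k=B_k^{-1}D_k\subset\mathbb{R}_+$.

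The crux is the two-sided estimate $\mathbb{E}[\min(Y_j,1)]\asymp\min(m_j,1)$ with absolute implied constants. When $m_j\le 1$ every atom $n t_j/B_j$ of $Y_j$ lies in $[0,1]$, so $\mathbb{E}[\min(Y_j,1)]=\mathbb{E}[Y_j]=\tfrac{t_j(N_j-1)}{2B_j}\asymp m_j$, using $N_j-1\asymp N_j$ for $N_j\ge 2$. When $m_j\ge 1$ one checks that a positive proportion of the atoms exceed $\tfrac12$, so $\mathbb{E}[\min(Y_j,1)]\ge\tfrac12\,\mathbb{P}(Y_j\ge\tfrac12)$ is bounded below by an absolute constant, while trivially $\mathbb{E}[\min(Y_j,1)]\le 1$; hence $\mathbb{E}[\min(Y_j,1)]\asymp 1=\min(m_j,1)$. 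Keeping track of the off-by-one atom counts near the threshold is the main technical point, and is where I expect the only real friction. Granting the estimate, the argument closes with the elementary remark that $\sum_j\min(m_j,1)<\infty\iff\sum_j m_j<\infty$: the forward implication is immediate, and conversely $\sum_j\min(m_j,1)<\infty$ forces $m_j<1$ for all large $j$, so $\min(m_j,1)=m_j$ eventually and $\sum_j m_j<\infty$. Chaining the equivalences yields $\mathbb{P}(X<\infty)=1\iff\sum_j m_j<\infty$, which is precisely the asserted converse.
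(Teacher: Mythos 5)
Your argument is correct, and it reaches the theorem by a genuinely different route from the paper. The paper works directly with the Jessen--Wintner three-series criterion for infinite convolutions (their Theorem 34, with truncation at radius $r=1$): for the sufficiency it verifies convergence of all three truncated series by explicit floor-function computations of $\omega_k(\mathbb{R}\setminus B(1))$, $c(\omega_{k,1})$ and $M(\omega_{k,1})$, and for the converse it deduces from those computations that the exceptional set $\{k: N_k-1>b_1\cdots b_k/t_k\}$ is finite and then sums the remaining terms. You instead realise $\mu_k$ as the law of a partial sum $X_k=\sum_{j\le k}Y_j$ of independent variables and (a) dispatch the sufficiency by the \emph{sure} absolute convergence $\sum_j|Y_j|\le\sum_j|N_jt_j/B_j|<\infty$, which bypasses the three-series machinery entirely and is noticeably cleaner than the paper's estimates; (b) handle the converse via the single-series criterion $\sum_j\mathbb{E}[\min(Y_j,1)]<\infty$ for non-negative independent summands together with the two-sided comparison $\mathbb{E}[\min(Y_j,1)]\asymp\min(m_j,1)$, whose case analysis ($m_j\le1$ versus $m_j\ge1$, with the proportion-of-atoms bound $\lfloor N_j/2\rfloor/N_j\ge 1/4$) I have checked and which replaces the paper's floor-function manipulations. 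Since Jessen--Wintner is itself the three-series theorem in convolution language, the two proofs rest on the same probabilistic foundation, but your organisation isolates where positivity is actually used (only in the converse, exactly as the statement demands) and trades explicit computation for soft comparisons; the paper's version has the minor advantage of staying entirely within the measure-theoretic formulation it quotes.
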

\subsection {\bf  Spectrality and integer tile}
For a  Borel probability measure $\mu$  on $\mathbb{R}^{n}$ with compact support $\Omega$, we  say that $\Lambda\subset \mathbb{R}^{n}$ is a spectrum of $\mu$ if
\begin{equation}\label{eq(1.000)}
\{e^{2\pi i<\lambda,x>}:\lambda\in \Lambda\}  \ \text{ forms an orthogonal basis for}  \ L^{2}(\mu).
\end{equation}
In this case, we  call $\mu$  a \emph{spectral measure}, and we also say  that $(\mu ,\Lambda)$ forms a \emph{spectral pair}. In particular, if $\mu$ is the normalized Lebesgue measure supported on  a  Borel set $\Omega$ such that \eqref{eq(1.000)} holds for some $\Lambda\subset \mathbb{R}^{n}$, then $\Omega$ is called  a \emph{spectral set}. It should be pointed out that the spectrum is by no means unique. For example, any translate of a spectrum is again a spectrum, but more radically different choices are also available.

A Borel set $\Omega \subset \mathbb{R}^{n}$ with positive measure is said to tile $\mathbb{R}^{n}$ by translations if
there exists a discrete set $L\subset \mathbb{R}^d $ such that
 $$\bigcup_{l \in L}(\Omega+l)=\mathbb{R}^n \quad \text { and } \quad m((\Omega+l_1) \cap (\Omega+l_2))=0 \  \text { for all }l_1 \neq l_2\in L,$$ where $m(\cdot)$ denotes the Lebesgue measure, and $L$ is called the tiling complement of $\Omega$.
For the unit cube $\bar{\Omega}=[0, 1]^{n}$, it is well known that $\bar{\Omega}$ is a spectral set and $\bar{\Omega}$ tile $\mathbb{R}^{n}$ by translations, and it is not difficult to  verify that the set $\mathbb{Z}^{n}$ is  a spectrum for  $\bar{\Omega}$ and also is a tiling complement of $\bar{\Omega}$.  A more specific conclusion is that $\Lambda$ is a spectrum of $\bar{\Omega}$ if and only if $\Lambda$ is a tiling complement of $\bar{\Omega}$\cite{IP98, LRW00}. The main interest in studying spectral sets comes from its mysterious connection to tiling, originally a conjecture proposed by  Fuglede \cite{Fug74}, and today known as the Fuglede Conjecture.

 \

\noindent\textbf{ The Fuglede Conjecture.} $\Omega\subset \mathbb{R}^n $ is a  spectral set  if and only if  it tiles $\mathbb{R}^n$  by translation.

\

This conjecture had baffled mathematicians studying spectral sets for many years. Until 2004, Tao \cite{Ta04} showed that there are spectral sets of dimension $n\geq5$ that are not tiles.  Afterwards, in dimensions $n\geq3$, counterexamples to both directions of the conjecture were found by Kolountzakis and Matolcsi \cite{KM061,KM062}. These counterexamples are composed of finitely many unit cubes in special arithmetic arrangements and are highly disconnected, but Greenfeld and Kolountzakis \cite{GK2023} recently  showed that  the conjecture is false in both directions for connected sets of sufficiently high dimensions. Until now, the conjecture is still open  in  dimensions $n=1$ and $2$ for  both directions, but fortunately, Lev and Matolcsi \cite{LM19} discovered that the conjecture holds in any dimension for a convex body.
For an integer $p\geq1$, the ring of integers modulo $p$ is denoted by $\mathbb{Z}_p := \mathbb{Z}/p\mathbb{Z}$.  We  also know that the conjecture
holds on $\mathbb{Z}_s$ \cite{L02, FFS16}, $\mathbb{Z}_s\times  \mathbb{Z}_s $ \cite{IMP17}  and  $\mathbb{Z}_{s^{n}t}$ with $n \geq 1$ \cite{MK17}, where $s, t$ are different primes. For more discussion on  the conjecture for cyclic groups, the reader can refer to \cite{KMSV22,M22, Ma20} etc.

We call a finite set $D\subset\mathbb{Z}$  an \emph{integer tile} if there exists $L\subset\mathbb{Z}$ such
that $D \oplus L= \mathbb{Z}_p$, where $\oplus$ denotes the direct sum. The spectrality of self-similar/Cantor-Moran measure is intricately linked to the integer tile  property of the digit set. In 2002, {\L}aba and  Wang \cite{LW02} proposed a far-reaching conjecture that   the self-similar measure $\mu_{b,D}$  is a spectral measure, then $ \alpha D $ is an integer tile for some $\alpha\in \mathbb{R}$.  For the four digit sets $D=\{0,a, b ,c\}\subset\mathbb{R}$, An {\it et al.} \cite{AHL22} indicated that the self-similar measure $\mu_{b,D}$ is a spectral measure, then $b\in\mathbb{Z}$ and $D\oplus b D \oplus \cdots \oplus  b^{k-1}D$ is  an integer tile  for all $k\in\mathbb{N}^+$.
For the consecutive digit sets $D'_{k}:= \{0,1,\cdots, N_{k}-1\}$,  An {\it et al.}\cite{ALZ24} showed Cantor-Moran measure $\mu_{\{b_k\},\{D'_k\}}$  is a spectral measure if and only if $D'_k\oplus b_{k} D'_{k-1}\oplus b_{k}b_{k-1} D'_{k-2}\oplus \cdots \oplus  b_{k}b_{k-1}\cdots b_2D'_{1}$ is an integer tile  for all $k\in\mathbb{N}^+$, and they raised the following question.
\begin{qn}\label{qn(1.2)}
 If Cantor-Moran measure $\mu_{\{b_k\},\{D_k\}}$  is a spectral measure, is the digit set $D_k+ b_{k} D_{k-1}+ b_{k}b_{k-1} D_{k-2}+\cdots + b_{k}b_{k-1}\cdots b_2D_{1}$ an integer tile  for all $k\in\mathbb{N}^+$ ?
\end{qn}
 In fact, the
converse of Question \ref{qn(1.2)} is not valid. We can use the following example to illustrate this point.
\begin{exam}\label{ex(1.0)}{\rm
Let $\tilde{D}_{1}=\{0,1,2\}$, $\tilde{D}_{k}=\{0,1,2\}4$ and $b_{1}=  b_{ k }=3$ for all $k\geq 2$. It is easy to  verify that   $\tilde{D}_k\oplus b_{k} \tilde{D}_{k-1}\oplus b_{k}b_{k-1} \tilde{D}_{k-2}\oplus\cdots\oplus b_{k}b_{k-1}\cdots b_2\tilde{D}_{1}$ is an integer tile  for all $k\in\mathbb{N}^+$, but we shows that $\mu_{\{b_k\},\{\tilde{D}_k\}}$ is not a spectral measure in \cite[Theorem 1.6]{WX2024}.}
\end{exam}

Inspired by Question \ref{qn(1.2)},  this paper focuses on investigating the spectrality and integer tile properties of the Cantor measure for a class of equidifferent digit sets. Let the Cantor-Moran measure
\begin{equation}\label{eq(1.2)}
\mu_{\{b_k\},\{D_k\}}=\delta_{b_{1}^{-1}D_1}\ast\delta_{b_{1}^{-1}b_{2}^{-1}D_2}
\ast\delta_{b_{1}^{-1}b_{2}^{-1} b_{3}^{-1}D_3}\ast\cdots
\end{equation}
be generated by an integer sequence $\{b_k\}_{k=1}^{\infty}$ with $|b_k|\geq 2$  and an integer  sequence of digit sets $\{D_k\}_{k=1}^{\infty}$
with $D_{k}=\left\{0,1, \cdots, N-1\right\}t_{k},$
where $N\geq2$ is a prime number and $\{t_{k}\}_{k=1}^{\infty}$ is bounded with $|t_{k}|\geq1$. In fact, Theorem \ref{th(1.1)} shows the Cantor-Moran measure $\mu_{\{b_k\},\{D_k\}}$ exists.
In order to more succinctly describe, throughout this paper we define
$$\tau_{N}(A)=\max\{k\in\mathbb{N}: N^{k}\mid A\}$$
for  $A\in\mathbb{Z}$,
and we write
\begin{equation}\label{eq(1.2--1)}\mathbf{s}_k:=\tau_{N}(b_1b_2\cdots b_k)-\tau_{N}(Nt_k).
\end{equation}
for all $k\geq1.$
Now, we give an equivalent condition for  $D_k$ to be an integer tile.

\begin{thm}\label{th(1.2-1)}
  Let the Cantor-Moran measure $\mu_{\{b_k\},\{D_k\}}$ be defined by  \eqref{eq(1.2)}. Then $D_k\oplus b_{k} D_{k-1}\oplus b_{k}b_{k-1} D_{k-2}\oplus\cdots\oplus b_{k}b_{k-1}\cdots b_2D_{1}$ is an integer tile  for each $k\in\mathbb{N}^+$  if and only if $\mathbf{s}_i\neq\mathbf{s}_j$ for all $i\neq j\in\mathbb{N}^{+}$, where $\mathbf{s}_i$ and $\mathbf{s}_j$ are defined by  \eqref{eq(1.2--1)}.
\end{thm}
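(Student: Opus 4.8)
The plan is to pass to the mask polynomial of $\mathbf{D}_k$, read off its cyclotomic factors, and then invoke the Coven--Meyerowitz criterion. Write $a_{k,j}:=b_kb_{k-1}\cdots b_{j+1}t_j$ (empty product for $j=k$), so that $\mathbf{D}_k=\bigoplus_{j=1}^{k}a_{k,j}\{0,1,\dots,N-1\}$ and, since $N$ is prime and $1+y+\cdots+y^{N-1}=\Phi_N(y)$, the generating function of $\mathbf{D}_k$ factors as $\prod_{j=1}^{k}\Phi_N(x^{a_{k,j}})$ (after clearing a harmless monomial when some $a_{k,j}<0$, since multiplication by a monomial is a translation and does not affect tiling). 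The first step is the arithmetic identity $\tau_{N}(a_{k,j})=\tau_{N}(b_1\cdots b_k)-1-\mathbf{s}_j$, which follows directly from $\tau_{N}(a_{k,j})=\sum_{l>j}\tau_{N}(b_l)+\tau_{N}(t_j)$ together with the definition $\mathbf{s}_j=\tau_{N}(b_1\cdots b_j)-\tau_{N}(Nt_j)$. Consequently, for each fixed $k$ the valuations $e_j:=\tau_{N}(a_{k,j})$, $1\le j\le k$, are pairwise distinct if and only if $\mathbf{s}_1,\dots,\mathbf{s}_k$ are pairwise distinct; letting $k\to\infty$ shows that ``$\mathbf{s}_i\neq\mathbf{s}_j$ for all $i\neq j$'' is exactly ``$e_1,\dots,e_k$ pairwise distinct for every $k$''. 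Thus it suffices to prove that $\mathbf{D}_k$ is an integer tile iff the $e_j$ are distinct.

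Next I would compute the cyclotomic factorisation. Writing $a_{k,j}=N^{e_j}u_j$ with $\gcd(u_j,N)=1$, the elementary identity $\Phi_N(x^{a_{k,j}})=\dfrac{x^{Na_{k,j}}-1}{x^{a_{k,j}}-1}=\prod_{m\mid u_j}\Phi_{N^{e_j+1}m}(x)$ shows that the cyclotomic factors contributed by the $j$-th block are exactly the $\Phi_{N^{e_j+1}m}$ with $m\mid u_j$. Among these, the only one whose index is a prime power is $\Phi_{N^{e_j+1}}$, because any $N^{e_j+1}m$ with $m>1$ has two distinct prime factors (as $\gcd(m,N)=1$). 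Hence, in the Coven--Meyerowitz notation, the set of prime powers $q^{a}$ with $\Phi_{q^{a}}\mid \mathbf{D}_k$ is precisely $S=\{N^{e_1+1},\dots,N^{e_k+1}\}$, a set of powers of the single prime $N$.

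For necessity I argue by contraposition. If $\mathbf{s}_i=\mathbf{s}_j$ for some $i\neq j$, choose $k=\max(i,j)$, so that $e_i=e_j$ and $|S|\le k-1$; then $\prod_{q^{a}\in S}q=N^{|S|}\le N^{k-1}<N^{k}=\mathbf{D}_k(1)$, so condition (T1) fails and, (T1) being necessary for any tiling, $\mathbf{D}_k$ is not an integer tile. (If the coincidence of valuations already destroys the directness of the sum, then $\mathbf{D}_k$ is not even a direct sum of cardinality $N^k$, so again the stated property fails.) Conversely, if the $e_j$ are pairwise distinct I would first check that the sum is genuinely direct: reading $\sum_j N^{e_j}u_j(\epsilon_j-\epsilon_j')=0$ modulo increasing powers of $N$ and using that each $u_j$ is a unit mod $N$ forces $\epsilon_j=\epsilon_j'$, whence $\#\mathbf{D}_k=N^k=\mathbf{D}_k(1)$. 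Now $\prod_{q^{a}\in S}q=N^{k}=\mathbf{D}_k(1)$, so (T1) holds; and (T2) holds vacuously, since every element of $S$ is a power of the same prime $N$ and there are thus no two prime powers of distinct primes to test. As (T1) together with (T2) is sufficient for tiling, $\mathbf{D}_k$ is an integer tile.

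The main obstacle is really the bookkeeping in the second paragraph: proving that the only prime-power cyclotomic indices dividing $\prod_j\Phi_N(x^{a_{k,j}})$ are the $N^{e_j+1}$, and that nothing essential hides in the non-prime-power factors $\Phi_{N^{e_j+1}m}$. Everything else is then a matter of correctly invoking the two halves of the Coven--Meyerowitz theorem (that (T1) is necessary for every tile, and that (T1)+(T2) is sufficient) and of disposing of the minor technicalities created by negative $b_k,t_k$ and by the degenerate, non-direct-sum cases; it is precisely the vacuity of (T2) for the single-prime cardinality $N^k$ that makes the argument go through so cleanly.
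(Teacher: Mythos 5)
Your proof is correct, but it takes a genuinely different route from the paper's. The paper never touches cyclotomic polynomials: after rewriting $\mathbf{D}_k=\bigoplus_{i}N^{\alpha_i}l_{\alpha_i}\{0,1,\dots,N-1\}$ with exactly your exponents $\alpha_i=\tau_N(b_1\cdots b_k)-1-\mathbf{s}_i$ and units $l_{\alpha_i}\in\mathbb{Z}\setminus N\mathbb{Z}$, it proves sufficiency by exhibiting an explicit complement $L_k$ that fills in the missing powers of $N$ between consecutive $\alpha_i$'s, and checks via an $N$-adic digit argument (its Claim 1, which is the same computation as your directness check) that $\mathbf{D}_k\oplus L_k$ is a complete residue system, i.e. $\mathbf{D}_k\oplus L_k=\mathbb{Z}_{N^{\alpha_k+1}}$; for necessity it invokes Tijdeman's theorem (if $D\oplus L=\mathbb{Z}$ and $\gcd(l,\#D)=1$ then $lD\oplus L=\mathbb{Z}$) to replace the two blocks with equal valuation by two identical blocks inside a direct-sum decomposition of $\mathbb{Z}$, a contradiction. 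Your Coven--Meyerowitz route buys a more conceptual necessity direction --- a pure counting obstruction from (T1), namely $N^{|S|}\le N^{k-1}<N^k=\mathbf{D}_k(1)$ --- in place of the Tijdeman trick, and sufficiency is immediate once one notes that $\#\mathbf{D}_k=N^k$ is a prime power so (T2) is vacuous; the price is reliance on the (nontrivial) Coven--Meyerowitz machinery where the paper is essentially self-contained, and the paper's construction has the added value of producing the tiling complement explicitly. Two small points you should make explicit in a full write-up: the paper's definition of integer tile is $D\oplus L=\mathbb{Z}_p$ for some $p$, which coincides with tiling $\mathbb{Z}$ (the Coven--Meyerowitz setting) because every tiling of $\mathbb{Z}$ by a finite set is periodic; and your identification of the prime-power spectrum $S=\{N^{e_j+1}\}$ from $\Phi_N(x^{a})=\prod_{m\mid u}\Phi_{N^{e+1}m}(x)$ with $a=N^e u$, $\gcd(u,N)=1$, is the step carrying the real content and deserves the careful justification you sketch.
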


In addition, we provided a positive answer to Question \ref{qn(1.2)} for the Cantor-Moran measure $\mu_{\{b_k\},\{D_k\}}$ defined by  \eqref{eq(1.2)}.
\begin{thm}\label{th(1.2)}
Let the Cantor-Moran measure $\mu_{\{b_k\},\{D_k\}}$ be defined by  \eqref{eq(1.2)}. If $\mu_{\{b_k\},\{D_k\}}$ is a spectral measure, then $\mathbf{s}_i\neq\mathbf{s}_j$ for all $i\neq j\in\mathbb{N}^{+}$ and $D_n\oplus b_{n} D_{n-1}\oplus b_{n}b_{n-1} D_{n-2}\oplus\cdots\oplus b_{n}b_{n-1}\cdots b_2D_{1}$ is an integer tile  for all $n\in\mathbb{N}^+$, where $\mathbf{s}_i$ and $\mathbf{s}_j$ are defined by  \eqref{eq(1.2--1)}..
\end{thm}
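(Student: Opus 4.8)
The plan is to combine Theorem \ref{th(1.2-1)} with a finite-level reduction and an $N$-adic counting argument. Since Theorem \ref{th(1.2-1)} already shows that the two asserted conclusions --- namely $\mathbf{s}_i\neq\mathbf{s}_j$ for all $i\neq j$, and $\mathbf{D}_n$ being an integer tile for every $n$ --- are equivalent, it suffices to prove the single implication: if $\mu_{\{b_k\},\{D_k\}}$ is spectral then $\mathbf{s}_i\neq\mathbf{s}_j$ whenever $i\neq j$. Writing $B_k:=b_1b_2\cdots b_k$, I would first record the factorization $\widehat{\mu}(\xi)=\prod_{k\geq1}\widehat{\delta_{D_k}}(\xi/B_k)$ and compute that $\widehat{\delta_{D_k}}(\xi/B_k)=0$ exactly on $\frac{B_k}{Nt_k}(\mathbb{Z}\setminus N\mathbb{Z})$; since $\frac{B_k}{Nt_k}=N^{\mathbf{s}_k}\rho_k$ with $\rho_k$ an $N$-adic unit, the level-$k$ zeros are precisely the rationals of $N$-adic valuation $\mathbf{s}_k$. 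Thus a collision $\mathbf{s}_i=\mathbf{s}_j$ means two distinct levels produce zeros at the same $N$-adic scale, and the goal becomes to show that this is incompatible with spectrality.

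The key reduction is to pass from the infinite measure to its truncations. Using $\delta_{b_1^{-1}D_1}\ast\cdots\ast\delta_{B_n^{-1}D_n}=\delta_{B_n^{-1}\mathbf{D}_n}=:\nu_n$, one has $\mu=\nu_n\ast\mu_{>n}$, where $\mu_{>n}$ is the tail convolution (a probability measure of the same type, which converges by Theorem \ref{th(1.1)}). I would prove that spectrality of $\mu$ forces each atomic measure $\nu_n$ to be spectral, equivalently that $\mathbf{D}_n$ is a spectral set in $\mathbb{Z}$: the mechanism is to use the $B_n$-periodicity of $\widehat{\nu_n}$ together with $\widehat{\mu_{>n}}(0)=1$ and the non-vanishing of $\widehat{\mu_{>n}}$ near the origin to extract from a spectrum $\Lambda$ of $\mu$ a family of $\#\operatorname{supp}\nu_n$ mutually $\nu_n$-orthogonal exponentials. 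I expect this finite-level reduction to be the main obstacle, since a convolution factor of a spectral measure need not be spectral in general; it has to be powered by the equi-positivity/admissibility machinery developed for spectral Cantor--Moran measures, in the spirit of An {\it et al.} \cite{ALZ24} and \cite{WX2024}.

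Granting that $\nu_n$ is spectral, the conclusion follows from two clean facts. First, a spectral atomic measure must have equal weights, so $\nu_n$ being spectral forces $\mathbf{D}_n=\bigoplus_{l=1}^{n} c_l\{0,\dots,N-1\}$, with $c_l=b_nb_{n-1}\cdots b_{l+1}t_l$, to be a genuine direct sum of cardinality $N^n$. Second, taking a spectrum $\Gamma$ of $\mathbf{D}_n$ with $0\in\Gamma$, every nonzero element of $\Gamma-\Gamma$ lies in $\bigcup_{l\leq n}\frac{1}{Nc_l}(\mathbb{Z}\setminus N\mathbb{Z})$ and hence has $N$-adic valuation in the finite set $V=\{-(1+\tau_N(c_l)):1\leq l\leq n\}$. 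After rescaling $\Gamma$ by a suitable power of $N$ so that all pairwise difference-valuations become nonnegative integers, reduction modulo a sufficiently high power of $N$ is injective on the rescaled set, and a filtration argument shows that the image can grow by a factor of at most $N$ only across each valuation that actually occurs; hence $\#\Gamma\leq N^{|V|}$. Combining, $N^n=\#\Gamma\leq N^{|V|}$ forces $|V|=n$, i.e.\ the $\tau_N(c_l)$ are pairwise distinct. Finally the identity $\tau_N(c_l)=\tau_N(B_n)-1-\mathbf{s}_l$ translates distinctness of the $\tau_N(c_l)$ into distinctness of $\mathbf{s}_1,\dots,\mathbf{s}_n$; letting $n\to\infty$ yields $\mathbf{s}_i\neq\mathbf{s}_j$ for all $i\neq j$, and Theorem \ref{th(1.2-1)} then gives that $\mathbf{D}_n$ is an integer tile for every $n$, completing the proof.
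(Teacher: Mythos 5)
Your overall architecture --- reduce via Theorem \ref{th(1.2-1)} to proving $\mathbf{s}_i\neq\mathbf{s}_j$, then run an $N$-adic valuation count on a spectrum of a finite convolution factor --- is sound at its two ends: the reduction to the valuation statement is exactly what the paper does, and your final counting argument (the ultrametric filtration giving $\#\Gamma\le N^{|V|}$, combined with the uniform-weight property of finitely supported spectral measures) is correct. The genuine gap is the middle step, which you yourself flag as ``the main obstacle'': the claim that spectrality of $\mu$ forces each truncation $\nu_n=\delta_{B_n^{-1}\mathbf{D}_n}$ to be spectral. You give no proof, only the hope that $B_n$-periodicity of $\widehat{\nu_n}$ and non-vanishing of $\widehat{\mu_{>n}}$ near $0$ let you extract $\#\operatorname{supp}\nu_n$ mutually $\nu_n$-orthogonal exponentials from a spectrum $\Lambda$ of $\mu$. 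This is precisely where the difficulty lives: $\Lambda$ only guarantees $(\Lambda-\Lambda)\setminus\{0\}\subset\mathcal{Z}(\widehat{\nu_n})\cup\mathcal{Z}(\widehat{\mu_{>n}})$, and nothing forces a subset of size $\#\operatorname{supp}\nu_n$ to have all its differences in $\mathcal{Z}(\widehat{\nu_n})$. The factorization result the paper actually invokes (Proposition \ref{prop(4.10-1)} from \cite{Wu2024}) requires the $(*)$ condition, and for your truncation splitting $(*)$ is not available: the zero sets $\bigcup_{k\le n}N^{\mathbf{s}_k}\mathbf{b}_k(\mathbb{Z}\setminus N\mathbb{Z})/t'_k$ and $\bigcup_{k>n}N^{\mathbf{s}_k}\mathbf{b}_k(\mathbb{Z}\setminus N\mathbb{Z})/t'_k$ have interleaved $N$-adic valuations whenever some $\mathbf{s}_j$ with $j>n$ is smaller than some $\mathbf{s}_i$ with $i\le n$ (the paper's Case II shows this does occur), so a difference of two elements of $\mathcal{Z}(\widehat{\mu_{>n}})$ can land in $\mathcal{Z}(\widehat{\nu_n})$.

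The paper sidesteps this by choosing a different convolution factor: it groups together exactly the levels $k$ with $\mathbf{s}_k$ equal to the minimal value $\mathbf{s}$, writes $\mu=\omega_1\ast\omega_2$ accordingly, and checks that $(*)$ holds for this splitting because every element of $\mathcal{Z}(\widehat{\omega_2})$ has $N$-adic valuation strictly larger than $\mathbf{s}$, so no difference of such elements can fall into $\mathcal{Z}(\widehat{\omega_1})$. Then $\omega_1$ is spectral, its zero set sits inside the single set $N^{\mathbf{s}}(\mathbb{Z}\setminus N\mathbb{Z})/\tilde t$, so $L^2(\omega_1)$ admits at most $N$ orthogonal exponentials, forcing the minimal class to be a singleton (Lemma \ref{lem(3.1)}, using boundedness of $\{t_k\}$ and Lemma \ref{lem01}, rules out an infinite minimal class); iterating on $\omega_2$ peels off one level per valuation and yields $\mathbf{s}_i\neq\mathbf{s}_j$ without ever touching the truncations. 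To salvage your route you would need an independent proof of the truncation reduction (in the spirit of \cite{DL23} for $N=2$, which is a substantial argument in its own right), or else you should replace the truncation by the paper's valuation-based splitting --- at which point your counting lemma is no longer needed, since the minimal-valuation factor already caps the orthogonal family at $N$ elements.
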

\begin{rem}{\rm The condition that $\{t_{k}\}_{k=1}^{\infty}$ is bounded is not needed in the proof of Theorem \ref{th(1.2-1)}. Moreover, Theorem \ref{th(1.2)} extends the result of  Deng and Li \cite{DL23} for the case $N=2$ into a more general form, but we adopt a different approach from they to prove it.}
\end{rem}

In the observation of Example \ref{ex(1.0)}, it is easy to get that the
converse of Theorem \ref{th(1.2)} is incorrect. Hence, the natural question is: {\em under what conditions does the converse of Theorem \ref{th(1.2)} hold?} In this paper, we refer to the results of Cao {\it et al.} \cite{DL23,CDLW24} and give an answer for this question.
\begin{thm}\label{th(1.4)}
Let the Cantor-Moran measure $\mu_{\{b_k\},\{D_k\}}$ be defined by  \eqref{eq(1.2)}. Suppose that there  exists an integer $m_0\geq1$  such that $|b_k |>(N-1) | t_k |$ for all $k\geq m_{0}$, then the following statements are equivalent.
\begin{enumerate}[(i)]
\item $\mu_{\{b_k\},\{D_k\}}$ is a spectral measure;
 \item $\mathbf{s}_i\neq\mathbf{s}_j$ for all $i\neq j\in\mathbb{N}^{+};$
 \item  $D_k\oplus b_{k} D_{k-1}\oplus b_{k}b_{k-1} D_{k-2}\oplus\cdots\oplus b_{k}b_{k-1}\cdots b_2D_{1}$ is an integer tile  for all $k\in\mathbb{N}^+$,
\end{enumerate}
where $\mathbf{s}_i$ and $\mathbf{s}_j$ are defined by  \eqref{eq(1.2--1)}.\end{thm}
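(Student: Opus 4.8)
The plan is to deduce the full three-way equivalence from the two preceding theorems, so that essentially one new implication remains. Indeed, Theorem~\ref{th(1.2-1)} already furnishes (ii)$\Leftrightarrow$(iii) with no separation hypothesis whatsoever, while Theorem~\ref{th(1.2)} gives (i)$\Rightarrow$(ii) and hence (i)$\Rightarrow$(iii). Consequently the only statement left to prove, and the only place where the assumption $|b_k|>(N-1)|t_k|$ for $k\ge m_0$ is used, is the converse (ii)$\Rightarrow$(i): granting that the $\mathbf{s}_i$ are pairwise distinct, I must exhibit a spectrum for $\mu_{\{b_k\},\{D_k\}}$.

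To build a candidate spectrum I would first record that the $n$-th partial convolution is the uniform measure $\mu_n=\delta_{(b_1\cdots b_n)^{-1}\mathbf{D}_n}$, so that the tile $\mathbf{D}_n$ of (iii) is exactly its digit set. Distinctness of the $\mathbf{s}_i$ means (as in the proof of Theorem~\ref{th(1.2-1)}) that the summands $\{0,\dots,N-1\}\,b_n\cdots b_{j+1}t_j$ of $\mathbf{D}_n$ occupy pairwise distinct powers of the prime $N$; exploiting this I choose for each $k$ a set $C_k$ making $(b_k,D_k,C_k)$ a Hadamard triple and set $\Lambda=\left\{\sum_{k=1}^n b_1 b_2\cdots b_{k-1}c_k : c_k\in C_k,\ n\ge1\right\}$. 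The distinct-$N$-valuation structure guarantees that $\Lambda$ is a well-defined discrete set and that $\widehat{\mu}(\lambda-\lambda')=0$ for all distinct $\lambda,\lambda'\in\Lambda$, i.e. that $E(\Lambda)=\{e^{2\pi i\lambda x}\}_{\lambda\in\Lambda}$ is orthonormal in $L^2(\mu)$. This orthonormality is the routine half: it follows formally from the Hadamard relations and does not invoke the separation condition.

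The crux is completeness of $E(\Lambda)$, which by the standard criterion (see \cite{DL23,CDLW24}) is equivalent to $Q(\xi):=\sum_{\lambda\in\Lambda}|\widehat{\mu}(\xi+\lambda)|^2\equiv1$; Bessel's inequality already gives $Q\le1$. This is where the hypothesis $|b_k|>(N-1)|t_k|$ enters, and I expect it to be the principal obstacle. The point is that this separation controls the supports of the tail Cantor--Moran measures $\mu_{>n}$ (those built from $b_{n+1},b_{n+2},\dots$ and $D_{n+1},D_{n+2},\dots$): each $\mathrm{supp}\,\mu_{>n}$ lies in an interval of radius at most $\sum_{j>n}\frac{(N-1)|t_j|}{|b_{n+1}\cdots b_j|}$, and $|b_j|>(N-1)|t_j|$ with $|b_j|\ge2$ bounds this radius by a constant independent of $n$. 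Uniformly bounded supports make $\{\widehat{\mu_{>n}}\}$ uniformly equicontinuous, and since $\widehat{\mu_{>n}}(0)=1$ this yields $\varepsilon>0$ and a fixed neighborhood $Z$ of the origin with $|\widehat{\mu_{>n}}(\xi)|\ge\varepsilon$ for all $\xi\in Z$ and all $n$ --- that is, the family $\{\mu_{>n}\}$ is \emph{equi-positive}. Given equi-positivity, the standard argument applies: each finite set $\Lambda_n=\{\sum_{k=1}^n b_1\cdots b_{k-1}c_k\}$ is a spectrum of $\mu_n$, so $\sum_{\lambda\in\Lambda_n}|\widehat{\mu_n}(\xi+\lambda)|^2\equiv1$; factoring $\widehat{\mu}$ into its level-$n$ part and remainder and letting $n\to\infty$, equi-positivity prevents any loss of mass and forces $Q\equiv1$. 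Carrying out this limiting argument rigorously is the decisive technical step.

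The restriction $k\ge m_0$ in the hypothesis causes no difficulty: only the tail measures $\mu_{>n}$ with $n\ge m_0-1$ require the uniform radius bound, while the finitely many remaining tails $\mu_{>n}$ with $0\le n<m_0-1$ each have bounded support automatically, so the whole family $\{\mu_{>n}\}_{n\ge0}$ stays equi-positive on a common (possibly smaller) neighborhood $Z$. Combining the orthonormality of $E(\Lambda)$ from the second step with $Q\equiv1$ from the third then shows that $\mu_{\{b_k\},\{D_k\}}$ is a spectral measure, which establishes (ii)$\Rightarrow$(i) and hence the full equivalence.
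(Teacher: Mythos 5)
Your framing is right in outline: the theorem does reduce to (ii)$\Rightarrow$(i) via Theorems \ref{th(1.2-1)} and \ref{th(1.2)}, and you correctly identify that the role of $|b_k|>(N-1)|t_k|$ is to force the tail measures $\nu_{>k}$ to be supported in $[0,1]$, which yields equi-positivity of the family $\{\nu_{>k}\}$ (this is exactly Proposition \ref{prop(4.2)}). But the construction of the spectrum is where the real work lies, and your candidate $\Lambda=\{\sum_k b_1\cdots b_{k-1}c_k\}$ with level-wise Hadamard triples $(b_k,D_k,C_k)$ does not survive scrutiny. First, integer Hadamard complements $C_k$ for the single level $(b_k,D_k)$ need not exist: $\mathcal{Z}(\hat{\delta}_{b_k^{-1}D_k})=\frac{b_k}{Nt_k}(\mathbb{Z}\setminus N\mathbb{Z})$ can be disjoint from $\mathbb{Z}$ (e.g.\ $N=2$, $b_k=3$, $t_k=1$); the paper's remark after Theorem \ref{th(1.4)} explicitly flags that the integer Hadamard triple framework is being avoided. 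Second, even working with the cumulative zero sets $N^{\mathbf{s}_k}\mathbf{b}_k(\mathbb{Z}\setminus N\mathbb{Z})/t'_k$, orthogonality of sums across levels is \emph{not} formal when $\{\mathbf{s}_k\}$ is non-monotone: a difference whose leading term lies in $N^{\mathbf{s}_{i}}(\cdots)(\mathbb{Z}\setminus N\mathbb{Z})$ can be pushed into $N\mathbb{Z}$ by higher-level terms carrying smaller data. The paper's fix is to place the level-$j$ digits at $N^{\mathbf{s}_j}\mathbf{b}_{\mathfrak{n}_j}c_j\{0,\dots,N-1\}$ with $\mathfrak{n}_j=\max\{j'\ge j:\mathbf{s}_j\ge\mathbf{s}_{j'}\}$, so that Lemma \ref{lem(4.1)}(ii) guarantees the sum stays in the zero set; that $\alpha=\sup_k(\mathfrak{n}_k-k)$ is finite is itself a nontrivial consequence of the hypotheses (Lemma \ref{lem(4.1-1)}). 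Your $b_1\cdots b_{k-1}C_k$ has none of this bookkeeping.

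The completeness step has a parallel gap. Theorem \ref{prop(4.2.2)} requires $|\hat{\nu}_{>k_n}(\lambda/(b_1\cdots b_{k_n}))|\ge\varepsilon_0$ for \emph{every} $\lambda\in\Lambda_n$, and the digits newly added at stage $n$ are not small after rescaling by $b_1\cdots b_{k_n}$, so equi-positivity near the origin does not suffice; the spectrum must be built adaptively, appending to each new digit an integer correction $b_1\cdots b_{m}z_\lambda$ whose existence is exactly what equi-positivity provides (this is why \eqref{eq(4.3-2)} contains the terms $b_1b_2\cdots b_{m_{k'}}z_\lambda$, absent from your static $\Lambda$). Moreover, because $\mathfrak{n}_j$ can exceed $k_n$ by up to $\alpha$, there is an ``overhang'' of $\alpha$ intermediate factors $\hat{\delta}_{D_{k_n+i}}$ between $\mu_{k_n}$ and the tail at which equi-positivity is applied; controlling these from below forces the case split (Case I, where the overhang can be avoided along a subsequence, versus Case II) and, in Case II, the delicate Proposition \ref{prop(4.4.2)} with its tailored choice of units $c_j$ (alternating signs on $\Omega_1$, the factors $b'_{\mathfrak{n}_j+1}\cdots b'_{k_2+\alpha}$ on $\Omega_2$). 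This is the technical core of Section \ref{sect.4} and is not recoverable from ``the standard argument'' you invoke.
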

\begin{rem} {\rm In the usual results, the spectrality of the Cantor-Moran measure are studied under the  integer Hadamard triple condition, while the above results avoid this condition to study the spectrality directly. This also means that we will have to face more challenges in constructing spectra.}
\end{rem}

\subsection {\bf Organization.}
In Section \ref{sect.2}, we  mainly prove Theorem \ref{th(1.1)}. We divide the proof of Theorem \ref{th(1.1)} into two parts (see Propositions \ref{prop(2.2-1-1)} and   \ref{prop(2.2-1-1-1)}).
In this process,  the convergence theorem of Jessen and Wintner (see Theorem \ref{th(2.1)}) is used to transform the proof of Propositions \ref{prop(2.2-1-1)} and   \ref{prop(2.2-1-1-1)}.

In Section \ref{sect.3}, we introduce some basic definitions, fix notation that will be used in this paper and discuss basic results about spectrality of measures. We  give an equivalent conditions for the integral tlie (see Theorem \ref{th(1.2-1)}), where we use a conclusion  of  Tijdeman about the direct sum  decomposition of two subsets (see Theorem \ref{th(4.1-1)}). Moreover, we prove Theorem \ref{th(1.2)} by simply going on to show that  $\mathbf{s}_i\neq\mathbf{s}_j$ for all $i\neq j\in\mathbb{N}^{+}$.

In Section \ref{sect.4}, we  focus on proving  $``(ii)\Longrightarrow (i)"$ of Theorem \ref{th(1.4)} and decompose this proof  into  the following two cases.

 \textbf{Case I:}
 There exists an infinite subsequence $\{k_n\}_{n=1}^{\infty}$ of $\mathbb{N}^+$
 such that $\min\{\mathbf{s}_j : j> k_n\}> \max\{\mathbf{s}_j : j\leq k_n\}$ for all $n\geq1$.

 \textbf{Case II:} There exists $k_0\in\mathbb{N}^+$ such that $\min\{\mathbf{s}_j : j> k\}< \max\{\mathbf{s}_j : j\leq k\}$ for all $k\geq k_0$.

The method we prove it is to construct an appropriate   $\Lambda=\bigcup_{n=1}^{\infty}\Lambda_{n}$ satisfying the conditions of Theorem \ref{prop(4.2.2)} in each case. At the end of this section,  some examples are given to show that  it is reasonable to divide the discussion into Case I and Case II.

\section{\bf Weak convergence of Cantor-Moran  measures\label{sect.2}}
Before discussing the weak convergence property of Cantor-Moran measures, we first give the convergence theorem of Jessen and Wintner \cite[Theorem 34]{JW1935} for infinite convolution in one dimension, which can be expressed as the following Theorem \ref{th(2.1)}. To illustrate Theorem  \ref{th(2.1)} more concisely, we first give some definitions.

Let $\omega_k$ be the Borel probability measures on $\mathbb{R}$. We define
$$c(\omega_k)=\int_{\mathbb{R}}xd\omega_k(x)  \ \ \ \text{and}\ \ \ M(\omega_k)=\int_{\mathbb{R}}(x-c(\omega_k))^{2}d\omega_k(x).$$
It is easy to check that $$M(\omega_k)=\int_{\mathbb{R}}x^2d\omega_k(x)-c(\omega_k)^{2}.$$
We define a new Borel probability measure $\omega_{k, r}$  by
\begin{equation}\label{eq(2.1)} \omega_{k, r} (E)=\omega_k(E\cap B(r))+\omega_k(\mathbb{R}\setminus B(r)) \delta_0(E)
\end{equation}
for every Borel subset $E \subset\mathbb{R}$, where $ \delta_0$ denotes the Dirac measure at $0$ and $B(r)$ denotes the closed ball with center at $0$ and radius $r$.

\begin{thm}\label{th(2.1)}\cite[Theorem 34]{JW1935}
With the above notations, let $\{\omega_k\}_{k=1}^{\infty}$ be a sequence of Borel probability measures on  $\mathbb{R}$. Fix a constant $r>0$, and let $\omega_{k,r}$ be defined by \eqref{eq(2.1)} for the measure $\omega_{k}, k\geq 1$. Then
the sequence of convolutions $\{\omega_1\ast \omega_2\ast\cdots\ast\omega_k\}_{k=1}^{\infty}$ converges weakly to a Borel probability measure if and only if the following three series all converge:
 \begin{equation}\label{eq(2.2)}  \sum_{k=1}^{\infty}\omega_{k}(\mathbb{R} \setminus B(r)), \  \ \ \ \  \sum_{k=1}^{\infty}c(\omega_{k,r})  \ \ \ \text{and} \ \ \ \sum_{k=1}^{\infty}M(\omega_{k,r}).
\end{equation}
\end{thm}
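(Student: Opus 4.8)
The plan is to recast the statement in the language of sums of independent random variables and then invoke Kolmogorov's three-series theorem together with the classical equivalence of the modes of convergence for such sums. First I would use the Kolmogorov extension theorem to construct a probability space carrying independent random variables $X_1, X_2, \ldots$ with $X_k$ distributed according to $\omega_k$. Then the finite convolution $\omega_1 \ast \omega_2 \ast \cdots \ast \omega_k$ is precisely the law of the partial sum $S_k = \sum_{j=1}^{k} X_j$, so weak convergence of the sequence $\{\omega_1 \ast \cdots \ast \omega_k\}_{k=1}^{\infty}$ is exactly convergence in distribution of $\{S_k\}_{k=1}^{\infty}$.

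Next I would identify the three series of \eqref{eq(2.2)} with the three Kolmogorov series. Setting the truncation $Y_k = X_k \mathbf{1}_{\{|X_k|\le r\}}$, a direct computation from \eqref{eq(2.1)} shows that the law of $Y_k$ is exactly $\omega_{k,r}$: the mass of $\omega_k$ lying outside $B(r)$ is collapsed onto the point $0$. Consequently $\omega_k(\mathbb{R}\setminus B(r)) = P(|X_k|>r)$, $c(\omega_{k,r}) = E[Y_k]$, and $M(\omega_{k,r}) = \mathrm{Var}(Y_k)$, so the three expressions in \eqref{eq(2.2)} are the probability of the large-deviation event, the truncated mean, and the truncated variance, respectively. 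Kolmogorov's three-series theorem (in both directions) then asserts that $\sum_k X_k$ converges almost surely if and only if all three series converge; note the statement is insensitive to the particular fixed value of $r$, in agreement with \eqref{eq(2.2)}.

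It then remains to bridge weak convergence of $\{S_k\}$ and almost sure convergence of the series $\sum_k X_k$. Almost sure convergence trivially yields convergence in distribution. For the converse I would appeal to the L\'evy / It\^o--Nisio equivalence theorem for independent summands: for partial sums of independent random variables, convergence in distribution forces convergence in probability, which in turn forces almost sure convergence along the full sequence. Chaining these equivalences gives the desired conclusion, namely that the convolutions $\{\omega_1 \ast \cdots \ast \omega_k\}$ converge weakly if and only if $\sum_k X_k$ converges almost surely if and only if the three series of \eqref{eq(2.2)} all converge.

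I expect the main obstacle to be precisely this last bridge, the equivalence between weak convergence of the partial sums and their almost sure convergence, since for arbitrary sequences convergence in distribution is strictly weaker than almost sure convergence; it is the independence of the summands that rescues the argument, and making it rigorous requires the L\'evy symmetrization and maximal-inequality machinery underlying the It\^o--Nisio theorem. In the original Fourier-analytic route of Jessen and Wintner one would instead analyze the infinite product of characteristic functions $\prod_j \widehat{\omega_j}(\xi)$, using the L\'evy continuity theorem and controlling the limit near $\xi = 0$, which leads to the same three conditions. By contrast, the verification that the truncated law equals $\omega_{k,r}$ and the matching of the moment quantities $c(\omega_{k,r})$ and $M(\omega_{k,r})$ to $E[Y_k]$ and $\mathrm{Var}(Y_k)$ are immediate from the definitions and constitute the routine part of the argument.
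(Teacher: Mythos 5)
Your proposal is correct. Note, however, that the paper does not actually prove this statement: it is quoted verbatim as Theorem 34 of Jessen--Wintner and used as a black box, so there is no internal proof to compare against. Your route --- realizing $\omega_1\ast\cdots\ast\omega_k$ as the law of the partial sum $S_k=\sum_{j\le k}X_j$ of independent variables $X_j\sim\omega_j$, checking that $\omega_{k,r}$ is exactly the law of the truncation $X_k\mathbf{1}_{\{|X_k|\le r\}}$ so that the three series become Kolmogorov's three series, and then closing the loop with the L\'evy/It\^o--Nisio equivalence of convergence in distribution, in probability, and almost surely for such partial sums --- is precisely the identification that Jessen and Wintner themselves record and that the paper's introduction alludes to when it says the convergence problem for infinite convolutions ``is identical with'' the convergence problem for series of independent random variables in the sense of Kolmogorov. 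The original Jessen--Wintner argument is Fourier-analytic (infinite products of characteristic functions plus the L\'evy continuity theorem), which you correctly flag as the alternative; your probabilistic reduction is the cleaner modern proof, and you have also correctly isolated the one genuinely nontrivial step, namely that weak convergence of the partial sums upgrades to almost sure convergence only because the summands are independent. I see no gap.
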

To facilitate our proof, we first give a simple but useful lemmal.
\begin{lem}\label{lm(2.1-1)}
Let $N\geq2$ be an integer, if $N-1>|M|$, then
 $$0<\min\left\{ 1-\frac{ 1+\left\lfloor|M|\right\rfloor}{N },  \ \  \frac{ 1+\left\lfloor|M|\right\rfloor}{N} \right\} < \left|\frac{N  }{M}\right|,$$
 where $\lfloor a\rfloor$ denotes the largest integer which is smaller or equal to $a$.
\end{lem}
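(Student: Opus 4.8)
The plan is to reduce everything to the single integer $q := \lfloor |M|\rfloor$ and then to exploit the elementary bound $\min\{a,1-a\}\le \tfrac12$, valid for every $a\in[0,1]$.

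First I would extract the arithmetic content of the hypothesis. Since $|M| < N-1$ and $q=\lfloor|M|\rfloor$ is the largest integer not exceeding $|M|$, the integrality of both $q$ and $N-1$ forces $q \le N-2$, equivalently $1 \le q+1 \le N-1$. Writing $a := \frac{1+q}{N}$, this reads $\frac{1}{N} \le a \le \frac{N-1}{N}$, so in particular $a\in(0,1)$ and $1-a = \frac{N-1-q}{N}$ with numerator $N-1-q \ge 1$. This single observation does all the real work.

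For the strict positivity on the left I would just read off both candidates for the minimum: $a \ge \frac{1}{N} > 0$, and $1-a \ge \frac{1}{N} > 0$ (the latter using $q\le N-2$). Hence $\min\{1-a,\,a\}\ge \frac{1}{N} >0$, which is the desired lower bound (and in fact a slightly sharper one than stated). For the upper bound I would use the universal estimate $\min\{a,1-a\}\le \frac{1}{2}$ together with the fact that $N-1>|M|$ gives $0<|M|<N$, whence $\bigl|\frac{N}{M}\bigr| = \frac{N}{|M|} > 1$; here one may assume $M\neq 0$, since otherwise $\bigl|\frac{N}{M}\bigr|=+\infty$ and the inequality is vacuous. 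Chaining these, $\min\{1-a,\,a\}\le \frac{1}{2} < 1 < \frac{N}{|M|}$, which is exactly the claimed strict inequality.

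There is no genuine obstacle in this lemma; the only point requiring any care is the interaction between the strict inequality $N-1>|M|$ and the floor function, which is precisely what upgrades $q \le N-1$ to $q\le N-2$ and thereby guarantees that both arguments of the minimum are strictly positive rather than merely nonnegative. The quantity $\frac{N}{|M|}>1$ is comfortably larger than the universal ceiling $\frac{1}{2}$ on $\min\{a,1-a\}$, so the upper inequality is never close to tight and needs no sharper estimate.
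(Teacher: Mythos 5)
Your proof is correct and follows essentially the same route as the paper: both arguments reduce to showing that $a=\frac{1+\lfloor|M|\rfloor}{N}$ lies strictly between $0$ and $1$ while $\left|\frac{N}{M}\right|>1$ (the paper gets $a<1$ from $\lfloor|M|\rfloor\le|M|<N-1$ rather than from the integrality bound $\lfloor|M|\rfloor\le N-2$, but this is a cosmetic difference). Your extra observations (the sharper lower bound $\frac{1}{N}$, the universal bound $\min\{a,1-a\}\le\frac12$, and the remark on $M=0$) are harmless refinements of the same argument.
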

\begin{proof}
Since $N-1> |M|$, we have $\frac{1
	+\left\lfloor \left|M  \right|\right\rfloor}{N}\leq\frac{1
	+\left|M  \right|}{N}<1<\left|\frac{N}{M}\right|$,
which means that $0< 1-\frac{1
	+\left\lfloor \left|M  \right|\right\rfloor}{N} \leq  \left|\frac{N}{M}\right| $ and $0<\frac{1+\left\lfloor|M|\right\rfloor}{N}< \left|\frac{N  }{M}\right|.$
\end{proof}
With the above full preparation, Theorem \ref{th(1.1)} will be divided into the following  Propositions \ref{prop(2.2-1-1)} and \ref{prop(2.2-1-1-1)} to prove.
\begin{prop}\label{prop(2.2-1-1)}
Given a sequence of integer  $\{b_k\}_{k=1}^{\infty}$ with $|b_{k}|\geq 2$ and a sequence of integer digit sets  $\{D_{k}\}_{k=1}^{\infty}$, where $D_{k}=\left\{0,1, \cdots, N_{k}-1\right\}t_{k}$ with $N_{k}\geq 2$ and $|t_{k}|\geq 1$. If $\sum_{k=1}^{\infty} \left|\frac{N_{k}t_{k}}{b_{1} b_{2} \cdots b_{k}}\right|<\infty$, then  $$
\mu_{k}=\delta_{b_{1}^{-1}D_1}\ast\delta_{b_{1}^{-1}b_{2}^{-1}D_2}
\ast\delta_{b_{1}^{-1}b_{2}^{-1} b_{3}^{-1}D_3}\ast\cdots\ast\delta_{b_{1}^{-1}b_{2}^{-1}\cdots b_{k}^{-1}D_k}
$$ converges weakly to a Borel probability measure.
\end{prop}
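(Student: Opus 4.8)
The plan is to apply the Jessen--Wintner convergence criterion (Theorem \ref{th(2.1)}) to the sequence of measures $\omega_k=\delta_{b_1^{-1}b_2^{-1}\cdots b_k^{-1}D_k}$, since $\mu_k=\omega_1\ast\omega_2\ast\cdots\ast\omega_k$. Thus it suffices to verify that the three series in \eqref{eq(2.2)} all converge. First I would record the basic scaling facts: if we set $c_k=b_1b_2\cdots b_k$ and write $D_k=\{0,1,\dots,N_k-1\}t_k$, then $\omega_k$ is the uniform probability measure on the $N_k$ points $\{j\,t_k/c_k : 0\le j\le N_k-1\}$. A direct computation gives the mean $c(\omega_k)=\frac{(N_k-1)t_k}{2c_k}$ and the variance $M(\omega_k)=\frac{(N_k^2-1)t_k^2}{12 c_k^2}$, using the standard formulas for the mean and variance of a discrete uniform distribution.

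The key observation is that the hypothesis $\sum_{k}\left|\frac{N_k t_k}{c_k}\right|<\infty$ forces $\left|\frac{N_k t_k}{c_k}\right|\to 0$, so for all sufficiently large $k$ the scaled digit set $\omega_k$ lives inside a fixed ball $B(r)$; consequently $\omega_k(\mathbb{R}\setminus B(r))=0$ for large $k$, which kills all but finitely many terms of the first series and shows $\omega_{k,r}=\omega_k$ for large $k$, so the truncation in \eqref{eq(2.1)} has no effect on the tail. Therefore the first series in \eqref{eq(2.2)} is a finite sum, and convergence of the remaining two series reduces to convergence of $\sum_k c(\omega_k)$ and $\sum_k M(\omega_k)$. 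For the means, $|c(\omega_k)|=\frac{(N_k-1)|t_k|}{2|c_k|}\le \frac12\left|\frac{N_k t_k}{c_k}\right|$, so $\sum_k|c(\omega_k)|\le\frac12\sum_k\left|\frac{N_k t_k}{c_k}\right|<\infty$ by hypothesis, giving absolute (hence ordinary) convergence of the second series. For the variances, $M(\omega_k)=\frac{(N_k^2-1)t_k^2}{12c_k^2}\le\frac{N_k^2 t_k^2}{12c_k^2}=\frac{1}{12}\left|\frac{N_k t_k}{c_k}\right|^2$, and since the hypothesis gives $\left|\frac{N_k t_k}{c_k}\right|\to 0$, these terms are eventually bounded by $\left|\frac{N_k t_k}{c_k}\right|$ itself, so $\sum_k M(\omega_k)$ converges by comparison with the convergent series $\sum_k\left|\frac{N_k t_k}{c_k}\right|$.

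The mild obstacle is bookkeeping rather than anything deep: one must be careful that the truncation passes to $\omega_{k,r}$ rather than $\omega_k$ in the second and third series of \eqref{eq(2.2)}, but precisely because $\omega_{k,r}=\omega_k$ for all large $k$ (by the first series argument above), the estimates on $c(\omega_{k,r})$ and $M(\omega_{k,r})$ coincide with those on $c(\omega_k)$ and $M(\omega_k)$ for the tail, and the finitely many initial terms are harmless. Here is where Lemma \ref{lm(2.1-1)} is likely intended to enter, to control the comparison between $\frac{(N_k^2-1)t_k^2}{c_k^2}$-type quantities and $\left|\frac{N_k t_k}{c_k}\right|$ uniformly, ensuring the squared tail is dominated by the first power. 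Assembling the three convergent series and invoking Theorem \ref{th(2.1)} then yields weak convergence of $\{\mu_k\}$ to a Borel probability measure, completing the proof of Proposition \ref{prop(2.2-1-1)}.
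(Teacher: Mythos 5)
Your proof is correct, and it follows the same overall strategy as the paper (apply the Jessen--Wintner criterion, Theorem \ref{th(2.1)}, to $\omega_k=\delta_{(b_1\cdots b_k)^{-1}D_k}$ and verify the three series in \eqref{eq(2.2)}), but the way you verify the series is genuinely simpler. The paper fixes $r=1$ and computes the truncated quantities $c(\omega_{k,1})$, $M(\omega_{k,1})$ for \emph{every} $k$, splitting the index set according to whether $N_k-1$ exceeds $|b_1\cdots b_k t_k^{-1}|$ and invoking Lemma \ref{lm(2.1-1)} to control the resulting floor-function expressions; you instead observe that the summability hypothesis forces $\left|\frac{N_k t_k}{b_1\cdots b_k}\right|\to 0$, so the support of $\omega_k$ lies in the fixed ball for all large $k$, the truncation is vacuous on the tail, and the exact mean and variance of the discrete uniform distribution give $|c(\omega_k)|\le\frac12\left|\frac{N_kt_k}{c_k}\right|$ and $M(\omega_k)\le\frac1{12}\left|\frac{N_kt_k}{c_k}\right|^2$, after which comparison with the hypothesis finishes the argument. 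This bypasses Lemma \ref{lm(2.1-1)} and the case analysis entirely (your guess that the lemma is needed to compare the squared quantities with the first power is not how the paper uses it, but you do not actually need it). What the paper's heavier computation buys is reuse: the explicit two-case formulas for the truncated series are exactly what the proof of the converse direction (Proposition \ref{prop(2.2-1-1-1)}) quotes and manipulates, whereas your shortcut is available only in the forward direction, where summability is assumed rather than deduced.
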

\begin{proof}
Write $\omega_{k}=\delta_{(b_1b_2\cdots b_k)^{-1}D_k}$ for all $k\geq1$. Let $r=1$, and $\omega_{k,r}$ be defined by \eqref{eq(2.1)} for all $k\geq1$.
By Theorem \ref{th(2.1)}, to prove \begin{align*}\mu_{k}&=\delta_{b_{1}^{-1}D_1}\ast\delta_{b_{1}^{-1}b_{2}^{-1}D_2}
\ast\delta_{b_{1}^{-1}b_{2}^{-1} b_{3}^{-1}D_3}\ast\cdots\ast\delta_{b_{1}^{-1}b_{2}^{-1}\cdots b_{k}^{-1}D_k}
\\&=\omega_{1}\ast\omega_{2}\ast\cdots\ast\omega_{k}
\end{align*}
converges weakly to a Borel probability measure, we just need to prove that  the three series of equation \eqref{eq(2.2)} all converge. In the following, we estimate each of these three series respectively.
According to some  simple calculations, we have

 $\begin{aligned}(1). \sum_{k=1}^{\infty}\omega_{k}(\mathbb{R} \setminus B(1)) = \sum_{k=1}^{\infty}\delta_{(b_1b_2\cdots b_k)^{-1}D_k}(\mathbb{R} \setminus B(1))
 &=\sum_{\{k : N_k -1>| b_1b_2\cdots b_k t_{k}^{-1}|\}} \left(1-\frac{1}{N_k}\left(1+\left\lfloor \left|\frac{b_1b_2\cdots b_k}{t_{k}}\right) \right|\right\rfloor\right).
\end{aligned}$
It follows from Lemma \ref{lm(2.1-1)} that
$$\sum_{k=1}^{\infty}\left|\omega_{k}(\mathbb{R} \setminus B(1))\right| <\sum_{\{k : N_k -1>| b_1b_2\cdots b_k t_{k}^{-1}|\}} \left|\frac{N_{k}t_{k}}{b_{1} b_{2} \cdots b_{k}}\right|.$$

$\begin{aligned} (2). \sum_{k=1}^{\infty}\left|c(\omega_{k,1})\right|&
= \sum_{k=1}^{\infty}\left| \int_{  B(1) }x d\delta_{(b_1b_2\cdots b_k)^{-1}D_k}(x)\right|
\\&= \sum_{\{k: N_k -1\leq| b_1b_2\cdots b_k t_{k}^{-1}|\}} \left|\sum_{d=0}^{N_{k}-1} \frac{dt_{k}}{N_kb_1b_2\cdots b_k }\right|+\sum_{\{k: N_k -1>| b_1b_2\cdots b_k t_{k}^{-1}|\}} \left|\sum_{d=0}^{\lfloor | b_1b_2\cdots b_k t_{k}^{-1}|\rfloor} \frac{dt_{k}}{N_kb_1b_2\cdots b_k }\right|
\\&=\sum_{\{k: N_k -1\leq| b_1b_2\cdots b_k t_{k}^{-1}|\}}  \frac{|t_{k}  |(N_{k}-1)}{2|b_1b_2\cdots b_k| }+\sum_{\{k: N_k -1>| b_1b_2\cdots b_k t_{k}^{-1}|\}}  \frac{ \left|t_{k}\right|\left(1+\left\lfloor \left|\frac{b_1b_2\cdots b_k}{t_{k}}\right|\right\rfloor\right)\left\lfloor \left|\frac{b_1b_2\cdots b_k}{t_{k}}\right|\right\rfloor}{2N_k|b_1b_2\cdots b_k|} .
\end{aligned}$
By a simple calculation and Lemma \ref{lm(2.1-1)}, we have
$$ \sum_{k=1}^{\infty}\left|c(\omega_{k,1})\right|<\sum_{k=1}^{\infty} \left|\frac{N_{k}t_{k}}{b_{1} b_{2} \cdots b_{k}}\right|.
$$

 $\begin{aligned}(3).
\sum_{k=1}^{\infty}\left|M(\omega_{k,1}\right|=\sum_{k=1}^{\infty} \int_{\mathbb{R}}(x-c(\omega_{k,1}))^{2}d\omega_{k,1}(x)
 =\sum_{k=1}^{\infty} \left(\int_{\mathbb{R}}x^2 d\omega_{k,1}(x) - c(\omega_{k,1})^2\right)  \leq\sum_{k=1}^{\infty}\int_{\mathbb{R}}\left|x\right| d\omega_{k,1}(x).
\end{aligned}$
Similar to  (2), it can be concluded that  $$\sum_{k=1}^{\infty}|M(\omega_{k,1})|<\sum_{k=1}^{\infty} \left|\frac{N_{k}t_{k}}{b_{1} b_{2} \cdots b_{k}}\right|.$$

 Combining (1) - (3) with $\sum_{k=1}^{\infty}|\frac{N_{k}t_{k}}{b_{1} b_{2} \cdots b_{k}}|<\infty$,
we have $\sum_{k=1}^{\infty}\omega_{k}(\mathbb{R} \setminus B(1))$,
$\sum_{k=1}^{\infty}c(\omega_{k,1})$ and $\sum_{k=1}^{\infty}M(\omega_{k,1})$  all converge.
\end{proof}
\begin{prop}\label{prop(2.2-1-1-1)}
Given a sequence of integers  $\{b_k\}_{k=1}^{\infty}$ with $b_{k}\geq 2$ and a sequence of integer digit sets  $\{D_{k}\}_{k=1}^{\infty}$, where $D_{k}=\left\{0,1, \cdots, N_{k}-1\right\}t_{k}$ with $N_{k}\geq 2$ and $t_{k}\geq 1$. If $$
\mu_{k}=\delta_{b_{1}^{-1}D_1}\ast\delta_{b_{1}^{-1}b_{2}^{-1}D_2}
\ast\delta_{b_{1}^{-1}b_{2}^{-1} b_{3}^{-1}D_3}\ast\cdots\ast\delta_{b_{1}^{-1}b_{2}^{-1}\cdots b_{k}^{-1}D_k}
$$ converges weakly to a Borel probability measure, then  $\sum_{k=1}^{\infty} \frac{N_{k}t_{k}}{b_{1} b_{2} \cdots b_{k}} <\infty$.
\end{prop}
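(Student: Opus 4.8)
The plan is to run the converse direction of the Jessen–Wintner criterion (Theorem \ref{th(2.1)}) and to exploit that in this setting everything is nonnegative. Writing $P_k=b_1b_2\cdots b_k$, $\omega_k=\delta_{P_k^{-1}D_k}$ and taking $r=1$, the hypothesis that $\mu_k=\omega_1\ast\cdots\ast\omega_k$ converges weakly forces, by Theorem \ref{th(2.1)}, each of the three series in \eqref{eq(2.2)} to converge. I will only need the first two, namely $\sum_k\omega_k(\mathbb{R}\setminus B(1))$ and $\sum_k c(\omega_{k,1})$. Since $b_k\ge2$ and $t_k\ge1$, the atoms of $\omega_k$ are the nonnegative points $\frac{dt_k}{P_k}$ with $0\le d\le N_k-1$, so both $\omega_k(\mathbb{R}\setminus B(1))$ and $c(\omega_{k,1})$ are nonnegative and must tend to $0$. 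Setting $a_k=\frac{N_kt_k}{P_k}$, the target is $\sum_k a_k<\infty$.

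I would split $\mathbb{N}^+$ according to whether the support of $\omega_k$ lies inside $B(1)$: call $k$ of \emph{type A} if $(N_k-1)t_k\le P_k$ and of \emph{type B} otherwise. For type A indices there is no truncation, so $\omega_{k,1}=\omega_k$ and a direct summation gives $c(\omega_{k,1})=\frac{(N_k-1)t_k}{2P_k}$; hence $a_k=\frac{2N_k}{N_k-1}\,c(\omega_{k,1})\le 4\,c(\omega_{k,1})$ because $N_k\ge2$. Consequently $\sum_{\text{type A}}a_k\le 4\sum_k c(\omega_{k,1})<\infty$, and it only remains to handle type B.

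The crux is to show that type B occurs for only finitely many $k$; this is the step I expect to be the main obstacle, since for a type B index the escaped mass may be small and one must instead extract a definite lower bound on $c(\omega_{k,1})$. Let $m_k=\lfloor P_k/t_k\rfloor$, so that $d=0,\dots,m_k$ are exactly the retained atoms and $e_k:=\omega_k(\mathbb{R}\setminus B(1))=1-\frac{m_k+1}{N_k}$. Relocating the escaped mass to $0$ contributes nothing to the first moment, whence $c(\omega_{k,1})=\frac{t_k\,m_k(m_k+1)}{2N_kP_k}$. Since $m_k+1>P_k/t_k$ by definition of the floor, I obtain $\frac{t_k(m_k+1)}{P_k}>1$ and therefore the key lower bound
\begin{equation*}
c(\omega_{k,1})>\frac{m_k}{2N_k}=\frac{1-e_k}{2}-\frac{1}{2N_k}.
\end{equation*}
If type B held for infinitely many $k$, then along that subsequence $e_k\to0$ from convergence of the first series, so for all large such $k$ one has $e_k<\tfrac14$, and then the bound together with $N_k\ge2$ gives $c(\omega_{k,1})>\tfrac38-\tfrac14=\tfrac18$, contradicting $c(\omega_{k,1})\to0$. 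Hence type B is finite. Combining the finitely many type B terms with the type A estimate yields $\sum_k a_k=\sum_k\frac{N_kt_k}{P_k}<\infty$, which is the claim.
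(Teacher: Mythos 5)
Your proposal is correct and follows essentially the same route as the paper: both split the indices according to whether the support of $\omega_k$ lies in $B(1)$, bound the ``no truncation'' part by the series $\sum_k c(\omega_{k,1})$ using $N_k\le 2(N_k-1)$, and show the truncated indices are finitely many by combining the first Jessen--Wintner series with the lower bound $\frac{t_k(m_k+1)}{P_k}>1$ on the truncated mean. The only (cosmetic) difference is that you conclude finiteness of the truncated set from a termwise contradiction ($c(\omega_{k,1})>\tfrac18$ cannot tend to $0$), whereas the paper sums the same inequality to get a divergent series of terms $\ge\tfrac12$.
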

\begin{proof}
Suppose $\mu_{k}$ converges weakly to a Borel probability measure, by Theorem \ref{th(2.1)}, we have
$$\sum_{k=1}^{\infty}\omega_{k}(\mathbb{R} \setminus B(1)) <\infty,  \ \ \ \ \sum_{k=1}^{\infty}c(\omega_{k,1})<\infty, \ \ \ \ \sum_{k=1}^{\infty}M(\omega_{k,1})<\infty.$$
According to the proof of Proposition \ref{prop(2.2-1-1)}, we have
\begin{align} \label{1.10005}
\sum_{\{k: N_k -1>  b_1b_2\cdots b_k t_{k}^{-1} \}} \left(1-\frac{1}{N_k} \left(1+\left\lfloor  \frac{b_1b_2\cdots b_k}{t_{k}}\right) \right\rfloor \right)<\infty
\end{align}
and
$$
\sum_{\{k: N_k -1\leq  b_1b_2\cdots b_k t_{k}^{-1} \}}  \frac{t_{k}(N_{k}-1)}{b_1b_2\cdots b_k }+\sum_{\{k: N_k -1>  b_1b_2\cdots b_k t_{k}^{-1} \}} \frac{ t_{k}\left(1+\left\lfloor \frac{b_1b_2\cdots b_k}{t_{k}}  \right\rfloor\right)\left\lfloor  \frac{b_1b_2\cdots b_k}{t_{k}}   \right\rfloor}{N_kb_1b_2\cdots b_k }<\infty.
$$
This means that
\begin{align} \label{1.10007}
\sum_{\{k: N_k -1\leq b_1b_2\cdots b_k t_{k}^{-1} \}}  \frac{t_{k}(N_{k}-1)}{b_1b_2\cdots b_k }<\infty
\ \ \  \text{and} \ \ \
\sum_{\{k: N_k -1>  b_1b_2\cdots b_k t_{k}^{-1} \}} \frac{ t_{k}\left(1+\left\lfloor \frac{b_1b_2\cdots b_k}{t_{k}}  \right\rfloor\right)\left\lfloor \frac{b_1b_2\cdots b_k}{t_{k}}  \right\rfloor}{N_kb_1b_2\cdots b_k }<\infty.
\end{align}
It follows from the fact  $\frac{b_1b_2\cdots b_k}{t_{k}}< 1+\left\lfloor  \frac{b_1b_2\cdots b_k}{t_{k}}  \right\rfloor $ and \eqref{1.10007}  that
\begin{align} \label{1.10008}\sum_{\{k: N_k -1> b_1b_2\cdots b_k t_{k}^{-1}\}} \frac{1}{N_k}\left\lfloor\frac{ b_1b_2\cdots b_k}{t_{k}}  \right\rfloor<\sum_{\{k: N_k -1> b_1b_2\cdots b_k t_{k}^{-1}\}} \frac{ t_{k}\left(1+\left\lfloor \frac{b_1b_2\cdots b_k}{t_{k}} \right\rfloor\right)\left\lfloor  \frac{b_1b_2\cdots b_k}{t_{k}}  \right\rfloor}{N_kb_1b_2\cdots b_k }<\infty,
\end{align}
and we conclude from \eqref{1.10005}, \eqref{1.10008}  that
$$\begin{aligned}\sum_{\{k: N_k -1>  b_1b_2\cdots b_k t_{k}^{-1} \}}  \frac{1}{2}&\leq \sum_{\{k: N_k -1>  b_1b_2\cdots b_k t_{k}^{-1} \}} \left(1-\frac{1}{N_k}\right)\\&=\sum_{\{k: N_k -1>  b_1b_2\cdots b_k t_{k}^{-1} \}} \left(1-\frac{1}{N_k} \left(1+\left\lfloor  \frac{b_1b_2\cdots b_k}{t_{k}} \right\rfloor \right)\right) + \sum_{\{k: N_k -1>  b_1b_2\cdots b_k t_{k}^{-1} \}} \frac{1}{N_k} \left\lfloor  \frac{b_1b_2\cdots b_k}{t_{k}} \right\rfloor \\&<\infty.
\end{aligned}$$
This implies that $\#\{k: N_k -1>  b_1b_2\cdots b_k t_{k}^{-1} \}<\infty$. For this reason, we have $\sum_{\{k: N_k -1>  b_1b_2\cdots b_k t_{k}^{-1} \}} \frac{N_{k}t_{k}}{b_{1} b_{2} \cdots b_{k}}<\infty$. Combining this with \eqref{1.10007},
 it is easy to deduce
 $$\begin{aligned}
 \sum_{k=1}^{\infty} \frac{N_{k}t_{k}}{b_{1} b_{2} \cdots b_{k}} &=\sum_{\{k: N_k -1\leq b_1b_2\cdots b_k t_{k}^{-1} \}} \frac{N_{k}t_{k}}{b_{1} b_{2} \cdots b_{k}}+\sum_{\{k: N_k -1>  b_1b_2\cdots b_k t_{k}^{-1} \}} \frac{N_{k}t_{k}}{b_{1} b_{2} \cdots b_{k}}
 \\&\leq \sum_{\{k: N_k -1\leq b_1b_2\cdots b_k t_{k}^{-1}\}}  \frac{2(N_{k}-1)t_{k}}{b_1b_2\cdots b_k } +\sum_{\{k: N_k -1>  b_1b_2\cdots b_k t_{k}^{-1} \}} \frac{N_{k}t_{k}}{b_{1} b_{2} \cdots b_{k}}
 \\&<\infty,
\end{aligned}$$
and the proof is complete.
\end{proof}
\begin{proof}[\bf Proof of Theorem \ref{th(1.1)}]
The proof can be derived from Propositions \ref{prop(2.2-1-1)} and  \ref{prop(2.2-1-1-1)}.
 \end{proof}

\section{\bf Spectrality of Cantor-Moran measures  \label{sect.3}}

\subsection {\bf Preliminary\label{sect.3.1}}
Let $\mu$ be a Borel probability measure on $ \mathbb{R}$. The Fourier transform of $\mu$ is defined by
$$
\hat{\mu} (x)=\int e^{2\pi i x \xi }d\mu(\xi)
 \ \ \text{for} \ \ x \in\mathbb{R}.
$$
Denote  $\mathcal{Z}(\hat{\mu} ):= \{\xi \in \mathbb{R}: \hat{\mu}(\xi)=0\} $ to be the zero set of $\hat{\mu}$. For a countable discrete set $\Lambda\subset \mathbb{R}$, it is easy to see
that $E(\Lambda)=\{e^{2\pi i\lambda x}:\lambda\in \Lambda\}$  is an orthogonal family of $L^{2}(\mu)$ if and only if
$$
0=\langle e^{2\pi i \lambda_1 x},e^{2\pi i\lambda_2 x}\rangle_{L^2(\mu)}=\int e^{2\pi i (\lambda_1-\lambda_2) x}d\mu=\hat{\mu}(\lambda_1-\lambda_2)
$$
for any $\lambda_1\neq\lambda_2\in\Lambda$. Therefore, the orthogonality of $E(\Lambda)$ is equivalent
to
\begin{equation}\label {eq(2.6)}
 (\Lambda-\Lambda)\setminus \{0\} \subset\mathcal{Z}(\hat{\mu}).
\end{equation}
In this case, we call $\Lambda$  an {\it orthogonal set} (respectively, {\it spectrum}) of $\mu$ if $ E(\Lambda)$ forms an orthogonal system (respectively, orthogonal basis) for $L^2(\mu)$. Define
$$ Q_{\Lambda}(x)=\sum_{\lambda \in \Lambda}|\hat{\mu}(x+\lambda)|^{2} \ \ \text{for} \ \ x \in\mathbb{R}.$$
In \cite[Lemma 4.2]{JP98}, Jorgensen and Pedersen  given a criterion that allows us to determine whether a countable set  $\Lambda$  is an  orthogonal set or a spectrum of $\mu$.
\begin{prop}[\cite{JP98}]\label{prop(2.3)}
Let $\mu$ be a Borel probability measure with compact support and   $\Lambda \subset \mathbb{R}$ be countable set. Then
\begin{enumerate}[\rm(i).]
 \item $\Lambda$ is an  orthogonal set of $\mu$ if and only if $Q_{\Lambda}(x) \leq 1$ for $x \in \mathbb{R}$.
 \item $\Lambda$ is a spectrum of $\mu$ if and only if $Q_{\Lambda}(x) \equiv 1$ for $x\in \mathbb{R}$.
\end{enumerate}
Moreover,  if $\Lambda$ is an  orthogonal set, then $Q_{\Lambda}(x)$ is an entire function.
\end{prop}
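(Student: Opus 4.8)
The plan is to realize $Q_{\Lambda}$ as the squared length of an orthogonal projection of a unit vector and then read off both equivalences from Bessel's inequality, Parseval's identity, and a single point evaluation. Throughout I write $e_t(\xi)=e^{2\pi i t\xi}\in L^{2}(\mu)$; since $\mu$ is a probability measure $\|e_t\|_{L^{2}(\mu)}=1$, and a direct computation gives $\langle e_x,e_{-\lambda}\rangle_{L^{2}(\mu)}=\hat{\mu}(x+\lambda)$, so that $Q_{\Lambda}(x)=\sum_{\lambda\in\Lambda}|\langle e_x,e_{-\lambda}\rangle|^{2}$. Because $|\hat{\mu}(-y)|=|\hat{\mu}(y)|$ for the positive measure $\mu$, the family $\{e_{-\lambda}\}_{\lambda\in\Lambda}$ is orthonormal exactly when $E(\Lambda)$ is, and I will work with it.

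For part (i), if $E(\Lambda)$ is orthogonal then $\{e_{-\lambda}\}$ is orthonormal, and Bessel's inequality applied to $e_x$ gives $Q_{\Lambda}(x)=\sum_{\lambda}|\langle e_x,e_{-\lambda}\rangle|^{2}\le\|e_x\|^{2}=1$ for every $x$. Conversely, suppose $Q_{\Lambda}(x)\le1$ for all $x$. Fixing $\lambda_0\in\Lambda$ and evaluating at $x=-\lambda_0$, the term $\lambda=\lambda_0$ contributes $|\hat{\mu}(0)|^{2}=1$, so $1\ge Q_{\Lambda}(-\lambda_0)=1+\sum_{\lambda\ne\lambda_0}|\hat{\mu}(\lambda-\lambda_0)|^{2}$ forces $\hat{\mu}(\lambda-\lambda_0)=0$ for all $\lambda\ne\lambda_0$. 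Since $\lambda_0$ is arbitrary, $(\Lambda-\Lambda)\setminus\{0\}\subset\mathcal{Z}(\hat{\mu})$, which by \eqref{eq(2.6)} is exactly orthogonality of $E(\Lambda)$.

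For part (ii), once orthogonality holds let $P$ denote the orthogonal projection of $L^{2}(\mu)$ onto $\overline{\operatorname{span}}\{e_{-\lambda}:\lambda\in\Lambda\}$, so that $Q_{\Lambda}(x)=\|Pe_x\|^{2}$. If $\Lambda$ is a spectrum then $P=\mathrm{Id}$ and Parseval yields $Q_{\Lambda}\equiv1$. Conversely, $Q_{\Lambda}\equiv1$ already forces orthogonality by (i), and then $\|Pe_x\|^{2}=1=\|e_x\|^{2}$ says $e_x\in\operatorname{range}(P)$ for every real $x$; since $\mu$ has compact support, the linear span of $\{e_x:x\in\mathbb{R}\}$ is dense in $L^{2}(\mu)$ by Stone--Weierstrass, so the closed subspace $\operatorname{range}(P)$ equals $L^{2}(\mu)$, i.e. $E(\Lambda)$ is an orthonormal basis and $\Lambda$ is a spectrum.

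It remains to show $Q_{\Lambda}$ is entire when $\Lambda$ is orthogonal, and I expect this to be the one genuinely delicate point, because the bound $Q_{\Lambda}\le1$ is available only on the real axis. Each $\hat{\mu}$ extends to an entire function by Paley--Wiener, so every term $g_{\lambda}(z)=\hat{\mu}(z+\lambda)\,\overline{\hat{\mu}(\bar z+\lambda)}$ is entire with $g_{\lambda}(x)=|\hat{\mu}(x+\lambda)|^{2}$ on $\mathbb{R}$. Writing $z=x+iy$ and recognizing $\hat{\mu}(x+iy+\lambda)$ as the Fourier coefficient of $e^{-2\pi y\xi}e_x\in L^{2}(\mu)$ against the orthonormal system $\{e_{-\lambda}\}$, Bessel gives $\sum_{\lambda}|\hat{\mu}(x+iy+\lambda)|^{2}\le\int e^{-4\pi y\xi}\,d\mu<\infty$, bounded uniformly on compact sets; Cauchy--Schwarz in $\lambda$ then bounds $\sum_{\lambda}|g_{\lambda}(z)|$ uniformly on each compact set. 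Hence the partial sums are uniformly bounded holomorphic functions converging pointwise to $Q_{\Lambda}$, and the Vitali--Porter normal-families theorem upgrades this to locally uniform convergence, so $Q_{\Lambda}$ is entire. The cleanest route here is precisely this Bessel estimate for the tilted measure $e^{-2\pi y\xi}\,d\mu$ followed by a normal-families argument, rather than attempting absolute convergence term by term.
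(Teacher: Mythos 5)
The paper does not prove this proposition; it is quoted verbatim from \cite{JP98} (Lemma 4.2 there), so there is no internal proof to compare against. Your argument is the standard Jorgensen--Pedersen one (Bessel for the inequality, point evaluation at $x=-\lambda_0$ for the converse, Parseval plus Stone--Weierstrass density of exponentials for the spectrum characterization, and a Vitali normal-families argument for analyticity) and it is correct; the only step you use silently is that $\{e_{-\lambda}\}$ is complete whenever $\{e_{\lambda}\}$ is, which follows from the conjugate-linear isometry $f\mapsto\bar f$ of $L^{2}(\mu)$, and is worth one line.
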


The following lemma gives an efficient method for discriminating that a countable set $\Lambda$ is not a spectrum of measure $\mu$.
\begin{lem}[\cite{DHL2014}]\label{lem01}Let $\mu=\mu_{1}\ast  \mu_{2}$ be the convolution of two probability measures $\mu_{i}(i=1,2)$, and they are not Dirac measures. Suppose that $\Lambda$ is a  orthogonal set of $\mu_{1}$, then $\Lambda$ is also a orthogonal set of $\mu$, but cannot be a spectrum of $\mu$.
\end{lem}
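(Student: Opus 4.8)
To prove Lemma \ref{lem01} I would treat its two assertions separately, using the factorization of the Fourier transform together with the criteria recalled above. For the orthogonality of $\Lambda$ with respect to $\mu$, the plan is to exploit that $\hat\mu(\xi)=\hat{\mu_1}(\xi)\,\hat{\mu_2}(\xi)$, so that $\mathcal{Z}(\hat{\mu_1})\subset\mathcal{Z}(\hat\mu)$. Since $\Lambda$ is orthogonal for $\mu_1$, the characterization \eqref{eq(2.6)} gives $(\Lambda-\Lambda)\setminus\{0\}\subset\mathcal{Z}(\hat{\mu_1})\subset\mathcal{Z}(\hat\mu)$, and the same characterization then shows at once that $\Lambda$ is orthogonal for $\mu$.

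For the second and main assertion I argue by contradiction, assuming $\Lambda$ is a spectrum of $\mu$. By Proposition \ref{prop(2.3)} this means $Q^{\mu}_\Lambda(x)\equiv 1$. Using that $|\hat{\mu_2}|\le 1$ (valid for any probability measure) and that $\Lambda$ is orthogonal for $\mu_1$ (so $Q^{\mu_1}_\Lambda\le 1$), I would write, for every $x\in\mathbb R$,
\[
1=\sum_{\lambda\in\Lambda}|\hat{\mu_1}(x+\lambda)|^2|\hat{\mu_2}(x+\lambda)|^2\le\sum_{\lambda\in\Lambda}|\hat{\mu_1}(x+\lambda)|^2=Q^{\mu_1}_\Lambda(x)\le 1.
\]
Hence every inequality is an equality; in particular $\sum_{\lambda\in\Lambda}|\hat{\mu_1}(x+\lambda)|^2\bigl(1-|\hat{\mu_2}(x+\lambda)|^2\bigr)=0$ for all $x$. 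As each summand is nonnegative and the total sum is finite, every summand vanishes. Fixing any $\lambda_0\in\Lambda$ (nonempty, being a spectrum) and substituting $y=x+\lambda_0$ yields $|\hat{\mu_1}(y)|^2\,(1-|\hat{\mu_2}(y)|^2)=0$ for all $y\in\mathbb R$.

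The crux is to convert this dichotomy into a contradiction with the hypothesis that $\mu_2$ is not a Dirac measure. Since $\hat{\mu_1}$ is continuous with $\hat{\mu_1}(0)=1$, there is an interval $(-\varepsilon,\varepsilon)$ on which $\hat{\mu_1}$ does not vanish, forcing $|\hat{\mu_2}(y)|=1$ for all $y\in(-\varepsilon,\varepsilon)$. I would then invoke the equality case of the triangle inequality applied to $\hat{\mu_2}(y)=\int e^{2\pi i y\xi}\,d\mu_2(\xi)$: the identity $|\hat{\mu_2}(y)|=1=\mu_2(\mathbb R)$ forces $e^{2\pi i y\xi}$ to equal, $\mu_2$-almost everywhere, a unimodular constant $c(y)$. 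If $\mathrm{supp}(\mu_2)$ contained two distinct points $\xi_1\ne\xi_2$, then — because every neighbourhood of a support point has positive mass and $\xi\mapsto e^{2\pi i y\xi}$ is continuous — one gets $e^{2\pi i y\xi_1}=c(y)=e^{2\pi i y\xi_2}$, whence $y(\xi_1-\xi_2)\in\mathbb Z$ for every $y\in(-\varepsilon,\varepsilon)$; but $y(\xi_1-\xi_2)$ ranges over an interval of positive length about $0$, which cannot be contained in $\mathbb Z$. Thus $\mathrm{supp}(\mu_2)$ is a single point and $\mu_2$ is a Dirac measure, contradicting the hypothesis. This last support argument is the main obstacle, since it is the only place where the non-degeneracy of $\mu_2$ enters, and it requires care to upgrade the almost-everywhere phase relation to exact equality at the two support points.
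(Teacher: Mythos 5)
The paper does not prove this lemma itself --- it is quoted from \cite{DHL2014} --- so there is no in-paper argument to compare against; your proof is correct and is essentially the standard argument behind the cited result (factor $\hat\mu=\hat{\mu_1}\hat{\mu_2}$, use $Q_\Lambda^{\mu}\equiv 1$ together with $Q_\Lambda^{\mu_1}\le 1$ to force $|\hat{\mu_2}|=1$ wherever $\hat{\mu_1}\neq 0$, then conclude $\mu_2$ is Dirac). All steps check out, including the upgrade from the $\mu_2$-a.e.\ phase identity to equality on $\mathrm{supp}(\mu_2)$ via closedness of the full-measure set.
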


\subsection {\bf Proof of  Theorems  \ref{th(1.2-1)}\label{sect.3.2}}
Given an integer sequence  $\{b_k\}_{k=1}^{\infty}$ with $|b_k|\geq 2$ and a sequence of digit sets $\{D_{k}=\left\{0,1, \cdots, N-1\right\}t_{k}\}_{k=1}^{\infty}$ with $|t_k|\geq1$, where  $N\geq2$ is a prime and $\{t_{k}\}_{k=1}^{\infty}$ is bounded, it follows from Theorem \ref{th(1.1)} that
 $\mu_{k}:=\delta_{b_{ 1}^{-1}D_{ 1}}\ast\delta_{b_{ 1}^{-1}b_{ 2}^{-1}D_{ 2}}
\ast\cdots\ast \delta_{b_{ 1}^{-1}b_{ 2}^{-1}\cdots b_{ k}^{-1} D_{k }}$
converges weakly to the Cantor-Moran measure
\begin{equation}\label{eq(2.08)}\begin{aligned}
 \mu_{\{b_k\},\{D_k\}}:=\delta_{b_{1}^{-1}D_1}\ast\delta_{b_{1}^{-1}b_{2}^{-1}D_2}
\ast\delta_{b_{1}^{-1}b_{2}^{-1} b_{3}^{-1}D_3}\ast\cdots\end{aligned}.
\end{equation}
Recall that
\begin{equation}\label{eq(3.6.1)}
\mathbf{s}_k=\tau_{N}(b_1b_2\cdots b_k)-\tau_{N}(Nt_k)
\end{equation}  for all $k\geq1$,
where  $\tau_{N}\left(A\right)=\max\left\{k\in\mathbb{N}: N^{k}\mid A\right\}$
for an integer $A$.  For convenience, we use $\tau$ to represent $\tau_{N}$ in the following proof.

Before proving Theorem \ref{th(1.2-1)}, we first give the following theorem, which is given by Tijdeman\cite{T} and plays an important role in the proof of Theorem \ref{th(1.2-1)}.
\begin{thm}\label{th(4.1-1)}\cite{T}
Let $D$ be  finite, and let $D\oplus L=\mathbb{Z}$, $0\in D\cap L$. Suppose that $\gcd(l, \#D)=1$, then $lD\oplus L=\mathbb{Z}$.
\end{thm}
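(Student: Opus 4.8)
The plan is to translate the assertion into the language of mask polynomials on a finite cyclic group and to reduce it to a closure property of the cyclotomic factors of the mask of $D$.

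\emph{Reduction to a cyclic group.} Since $D$ is finite and $D\oplus L=\mathbb Z$, it is classical that every tiling of $\mathbb Z$ by a finite set is periodic, so there is $T\in\mathbb N^{+}$ with $L+T=L$. Put $L_0=L\cap[0,T)$, so that $D\oplus L_0=\mathbb Z_T$ and $\#D\cdot\#L_0=T$. Writing the mask polynomials $D(x)=\sum_{a\in D}x^{a}$ and $L_0(x)=\sum_{\ell\in L_0}x^{\ell}$, the tiling $D\oplus L_0=\mathbb Z_T$ is equivalent, via the discrete Fourier transform on $\mathbb Z_T$, to $\#D\cdot\#L_0=T$ together with $D(\zeta)L_0(\zeta)=0$ for every $T$-th root of unity $\zeta\neq1$. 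Because $L$ is $T$-periodic, $lD\oplus L=\mathbb Z$ is in turn equivalent to $(lD\bmod T)\oplus L_0=\mathbb Z_T$, and since $\sum_{a\in D}\zeta^{la}=D(\zeta^{l})$, the target identity reads $D(\zeta^{l})L_0(\zeta)=0$ for all $T$-th roots $\zeta\neq1$. Comparing with the hypothesis, it suffices to prove the implication
\[ D(\zeta)=0\ \Longrightarrow\ D(\zeta^{l})=0 \qquad(\zeta^{T}=1). \]
Collision-freeness of $lD$ modulo $T$ (hence that the resulting sum is genuinely direct) then follows automatically, by noting that $0\in L_0$ and that the above vanishing makes the multiset convolution of $lD$ with $L_0$ have constant Fourier transform.

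\emph{Reduction to a prime and the easy case.} Factoring $l$ into primes and iterating, and using $\gcd(l,\#D)=1$, it is enough to prove the implication for a single prime $p$ with $p\nmid\#D$. Fix $\zeta$ with $D(\zeta)=0$ and let $d$ be its order, a divisor of $T$ with $d>1$; since $D$ has integer coefficients, the $d$-th cyclotomic polynomial $\Phi_d$ divides $D$ in $\mathbb Z[x]$. If $p\nmid d$, then $\zeta^{p}$ is again a primitive $d$-th root of unity, so $\Phi_d(\zeta^{p})=0$ and $D(\zeta^{p})=0$. Everything therefore reduces to the arithmetic heart, the following \emph{Key Lemma}: if $D$ is a $0/1$ mask of a tile, $p\nmid\#D$, $\Phi_d\mid D$ and $p\mid d$, then $\Phi_{d/p}\mid D$.

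\emph{Approach to the Key Lemma.} First, because $\Phi_{p^{a}}(1)=p$, if $\Phi_{p^{a}}\mid D$ for some $a\geq1$ then $p\mid D(1)=\#D$, contrary to hypothesis; hence $\Phi_{p^{a}}\nmid D$ for all $a\geq1$. Writing $d=p^{s}m$ with $p\nmid m$ and $s\geq1$, this forces $m>1$, so $d/p=p^{s-1}m>1$ and $\zeta^{p}$ is a genuine nontrivial root. I would next exploit the cyclotomic identities $\Phi_{p^{s}m}(x)=\Phi_{p^{s-1}m}(x^{p})$ when $s\geq2$ and $\Phi_{pm}(x)\Phi_m(x)=\Phi_m(x^{p})$ when $s=1$; reducing modulo $p$ and using the Frobenius congruence $f(x^{p})\equiv f(x)^{p}\ (\mathrm{mod}\ p)$, each yields $\overline{\Phi_d}=\overline{\Phi_{d/p}}^{\,c}$ for some $c\geq1$ in $\mathbb F_p[x]$, so that $\Phi_d\mid D$ gives $\overline{\Phi_{d/p}}\mid\overline D$. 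Finally the tiling supplies a dichotomy: either $\Phi_{d/p}\mid D$ (the goal) or $\Phi_{d/p}\mid L_0$; assuming the latter, I would compare the multiplicity of an irreducible factor of $\overline{\Phi_m}$ on the two sides of the reduced identity $\overline D\,\overline{L_0}\equiv(x^{T}-1)/(x-1)\pmod{x^{T}-1}$, using the high multiplicity already forced into $\overline D$ and $\overline{L_0}$, to reach a contradiction and thereby force $\Phi_{d/p}\mid D$ over $\mathbb Z$.

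\emph{Main obstacle.} The crux is precisely this last step: upgrading the mod-$p$ divisibility $\overline{\Phi_{d/p}}\mid\overline D$ to the characteristic-zero divisibility $\Phi_{d/p}\mid D$. This is where the tiling hypothesis is indispensable, for a general $0/1$ polynomial with $p\nmid\#D$ the implication $\Phi_d\mid D\Rightarrow\Phi_{d/p}\mid D$ is false; indeed $D(x)=1+x+x^{2}+x^{4}+x^{10}$ satisfies $\Phi_6\mid D$, $\Phi_3\nmid D$ and $2\nmid\#D=5$, yet $D$ is not a tile. The difficulty is controlling the non-cyclotomic irreducible factors of $D$ modulo $p$, which may a priori also carry factors of $\overline{\Phi_m}$ and so spoil the naive multiplicity count; disposing of this bookkeeping by means of the tiling factorization is the technical core of Tijdeman's theorem, and once it is in hand the two reductions above deliver $D(\zeta^{l})=0$ for all $\zeta$, and hence $lD\oplus L=\mathbb Z$.
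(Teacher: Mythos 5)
The paper does not prove this statement at all: it is quoted verbatim from Tijdeman \cite{T} and used as a black box, so your attempt can only be measured against the classical argument. Your opening reductions are sound (periodicity of tilings of $\mathbb{Z}$ by a finite set, passage to $D\oplus L_0=\mathbb{Z}_T$, the Fourier criterion $D(\zeta)L_0(\zeta)=0$ for $\zeta^T=1$, $\zeta\neq1$, and the reduction to one prime $p\nmid\#D$), but the proposal then stops exactly where a proof is required: your ``Key Lemma'' ($\Phi_d\mid D$, $p\mid d$, $p\nmid\#D$ $\Rightarrow$ $\Phi_{d/p}\mid D$ for tiles) is only sketched, and its decisive step --- upgrading $\overline{\Phi_{d/p}}\mid\overline{D}$ in $\mathbb{F}_p[x]$ to $\Phi_{d/p}\mid D$ in $\mathbb{Z}[x]$ by a multiplicity count in the reduced tiling identity --- is the very point you concede you cannot carry out. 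Moreover, your reduction to the pointwise implication $D(\zeta)=0\Rightarrow D(\zeta^{l})=0$ asks for strictly more than the theorem needs: the tiling $lD\oplus L_0=\mathbb{Z}_T$ only requires $D(\zeta^{l})L_0(\zeta)=0$ for each $\zeta\neq1$, so the alternative $L_0(\zeta)=0$ is available at every root, and it is not clear that your per-root closure property even holds for general tiles --- it belongs to the Coven--Meyerowitz circle of cyclotomic-structure questions, which is substantially harder than Tijdeman's theorem itself. So this is a genuine gap, not a routine omission.

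What makes the gap avoidable is that you already held the right tool --- the Frobenius congruence --- but aimed it at $\Phi_d$ instead of at the tiling identity. Reduce $D(x)L_0(x)\equiv 1+x+\cdots+x^{T-1}\pmod{x^T-1}$ modulo $p$ and use $D(x^{p})\equiv D(x)^{p}\pmod{p}$ to get $D(x^{p})L_0(x)\equiv D(x)^{p-1}\cdot\frac{x^T-1}{x-1}\pmod{p,\;x^T-1}$; since $f(x)\cdot\frac{x^T-1}{x-1}\equiv f(1)\cdot\frac{x^T-1}{x-1}\pmod{x^T-1}$ for any $f$, and $D(1)^{p-1}=(\#D)^{p-1}\equiv1\pmod p$ by Fermat, this gives $D(x^{p})L_0(x)\equiv 1+x+\cdots+x^{T-1}\pmod{p,\;x^T-1}$. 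Hence every covering multiplicity of $pD+L_0$ modulo $T$ is a nonnegative integer congruent to $1\bmod p$, so each is at least $1$; as they sum to $\#D\cdot\#L_0=T$ over $T$ residues, each equals exactly $1$, i.e.\ $pD\oplus L_0=\mathbb{Z}_T$ and thus $pD\oplus L=\mathbb{Z}$. Iterating over the prime factors of $|l|$ (note $\#(pD)=\#D$ remains coprime to the next prime) and disposing of a possible sign via $D(\zeta^{-1})=\overline{D(\zeta)}$ finishes the proof in a few lines, with no cyclotomic bookkeeping at all. This counting argument is essentially Tijdeman's own; your route, as written, remains incomplete at its core.
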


\begin{proof}[\bf Proof of  Theorem \ref{th(1.2-1)}]
We first prove the sufficiency. For any  $k\in\mathbb{N}^+$, let $\alpha_i= \tau (b_1b_2\cdots b_k)-1-\mathbf{s}_i$ for $1\leq i\leq k$.  Since $\mathbf{s}_i\neq\mathbf{s}_j$ for all $i\neq j\in\mathbb{N}^{+}$, we have $\alpha_i\neq \alpha_j\geq0$ for all $i\neq j\in\{1,2, \cdots, k\}$ and
$$\begin{aligned}\bar{D}_k&=D_k+b_{k} D_{k-1}+b_{k}b_{k-1} D_{k-2}+b_{k}b_{k-1}\cdots b_2D_{1}
\\&=N^{\alpha_k}\left\{0,1, \cdots, N-1\right\}l_{\alpha_k}+N^{\alpha_{k-1}}\left\{0,1, \cdots, N-1\right\}l_{\alpha_{k-1}}+\cdots+N^{\alpha_{1}}\left\{0,1, \cdots, N-1\right\}l_{\alpha_1}
\end{aligned}$$
 for some $l_{\alpha_i}\in \mathbb{Z}\setminus N\mathbb{Z}$.
For convenience, we rearrange $\alpha_k, \alpha_{k-1}, \cdots, \alpha_{1}$ so that $ \alpha_k>\alpha_{k-1}>\cdots>\alpha_1.$ Let $L_{k}=\oplus_{i=0}^{k-1}L_{i, i+1}$,
where
$$L_{0, 1}= \begin{cases}\left\{0\right\}, & if \ \ \alpha_{1}= 0;\\ \oplus_{j=0}^{\alpha_{1}-1}N^{j}\left\{0,1, \cdots,N-1\right\}, & if \ \ \alpha_{1}>0;\end{cases}$$
and
$$
L_{i, i+1}= \begin{cases}\left\{0\right\}, & if \ \ \alpha_{i+1}= \alpha_{i}+1;\\ \oplus_{j=\alpha_{i}+1}^{\alpha_{i+1}-1}N^{j}\left\{0,1, \cdots,N-1\right\}, & if \ \ \alpha_{i +1}>\alpha_{i}+1;\end{cases}
$$
for $1\leq i\leq k-1$.

{\rm {\bf Claim 1.} The representation of the elements in  $\bar{D}_k + L_{k}$ is unique, and $ x_2-x_1 \notin N^{\alpha_k +1}\mathbb{Z}$ for any  $x_1\neq x_2\in \bar{D}_k + L_{k}$.}
\begin{proof}[\bf Proof of  Claim 1] We show that the expression is unique, and  $ x_2 -x_1 \notin N^{\alpha_k +1}\mathbb{Z}$ can be proved similarly.
Suppose that there are two distinct sequences $ \{z_j\}_{z=0}^{\alpha_k}$ and $ \{z_j'\}_{j=0}^{\alpha_k}$ with $z_j,  z_j' \in \left\{0,1, \cdots, N-1\right\}$ such that  $$\sum_{j=0}^{\alpha_k}N^{j}z_jl_j=\sum_{j=0}^{\alpha_k}N^{j}z_j'l_j\in\bar{D}_k + L_{k},$$
where $l_j=1$ for $j\in\{0,1,\cdots,\alpha_k\}\setminus \{\alpha_1,\alpha_2,\cdots,\alpha_k\}$.
Then $\sum_{j=0}^{\alpha_k}N^{j}(z_j'-z_j)l_j=0$. Let $0\leq t\leq \alpha_k$ be the smallest integer such that $z_t\neq z_t'$, we have
\begin{equation}\label{eq4.2----1} (z_t'-z_t)l_t=\sum_{j=0}^{\alpha_k-t}N^{j+1}(z_j-z_j')l_j\in N\mathbb{Z}.\end{equation}
Since  $\gcd(l_t, N)=1$, it follows from \eqref{eq4.2----1} that $z_t'-z_t\in N\mathbb{Z}$, which contradicts the fact $z_{t}-z'_{t}\in\pm\{1, 2, \cdots,N-1\}.$ The claim follows, i.e., $ \bar{D}_k + L_{k}=\bar{D}_k \oplus L_{k}$.
\end{proof}
Since $\#(\bar{D}_k\oplus L_{k})=N^{\alpha_k +1}$, it follows from Claim 1 that $\bar{D}_k \oplus L_{k}$  is  a complete residue system of $N^{\alpha_k +1}$. Hence, $\bar{D}_k \oplus L_{k}=\mathbb{Z}_{N^{\alpha_k +1}}$, i.e.,  $D_k\oplus b_{k} D_{k-1}\oplus b_{k}b_{k-1} D_{k-2}\oplus \cdots \oplus b_{k}b_{k-1}\cdots b_2D_{1}$ is an integer tile.

Next, we prove the  necessity by contradiction. Suppose that there exist $j_0>i_0$ such that $\mathbf{s}_{i_0}=\mathbf{s}_{j_0}$ and $D_k\oplus b_{k} D_{k-1}\oplus b_{k}b_{k-1} D_{k-2}\oplus \cdots\oplus b_{k}b_{k-1}\cdots b_2D_{1}$ is an integer tile  for all $k\in\mathbb{N}^+$. Then there exists $L\subset\mathbb{Z}$ such that
\begin{equation}\label{eq(3.5--22)}\begin{aligned}D_{j_0}\oplus b_{j_0} D_{j_{0}-1}\oplus \cdots\oplus b_{j_{0}}b_{j_{0}-1}\cdots b_{i_{0}+1}D_{i_0}\oplus  \cdots \oplus b_{j_{0}}b_{j_{0}-1}\cdots b_{2}D_{1}\oplus L=\mathbb{Z}.
\end{aligned}\end{equation}
Let $\alpha= \tau (b_1b_2\cdots b_{j_0})-1-\mathbf{s}_{i_0}$, then there exist $l_{i_0}, l_{j_0}\in \mathbb{Z}\setminus N\mathbb{Z}$ such that $ b_{j_{0}}b_{j_{0}-1}\cdots b_{i_{0}+1}D_{i_0}=N^{\alpha}\left\{0,1, \cdots, N-1\right\}l_{i_0}$ and $ D_{j_0}=N^{\alpha}\left\{0,1, \cdots, N-1\right\}l_{j_0}$. Writing
$$\bar{L}:= b_{j_0} D_{j_{0}-1}\oplus \cdots\oplus b_{j_{0}}b_{j_{0}-1}\cdots b_{i_{0}+2}D_{i_0 +1}\oplus b_{j_{0}}b_{j_{0}-1}\cdots b_{i_{0}}D_{i_0 -1}\oplus \cdots\oplus b_{j_{0}}b_{j_{0}-1}\cdots b_{2}D_{1}\oplus L.$$
According to  \eqref{eq(3.5--22)}, we have
$$ N^{\alpha}\left\{0,1, \cdots, N-1\right\}l_{j_0}\oplus N^{\alpha}\left\{0,1, \cdots, N-1\right\}l_{i_0}\oplus\bar{L}=\mathbb{Z}.$$
Since $\gcd(l_{i_0}, N)=1$ and $\gcd(l_{j_0}, N)=1$, it follows from Theorem
\ref{th(4.1-1)} that
$$ N^{\alpha}\left\{0,1, \cdots, N-1\right\}l_{j_0}l_{i_0}\oplus N^{\alpha}\left\{0,1, \cdots, N-1\right\}l_{i_0}l_{j_0}\oplus\bar{L}=\mathbb{Z},$$
which contradicts the definition of a direct sum.

Therefore, this  theorem is proved.
\end{proof}

In order to facilitate our subsequent proof, we give a very useful lemma here.
\begin{lem}\label{rem(1-1)}
 For any prime $N\geq2$ and integer $k\geq2$, if $a_i\neq a_j$ for all $1\leq i\neq j\leq k$ and $c_l\in\mathbb{Z}\setminus N\mathbb{Z}$ for all $1\leq l\leq k$, we have
$W:= N^{a_1}c_{ 1 } \{0, 1, \cdots, N-1\} + N^{a_2}c_{ 1 } \{0, 1, \cdots, N-1\} +\cdots+ N^{a_k}c_{ k } \{0, 1, \cdots, N-1\}$ is a direct sum, i.e., $W=\oplus_{i=1}^{k} N^{a_i}c_{ i } \{0, 1, \cdots, N-1\}  $.
\end{lem}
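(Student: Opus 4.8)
The plan is to prove the claim directly from the definition of a direct sum, namely that every element of $W$ admits a unique expansion $\sum_{i=1}^k s_i$ with $s_i\in N^{a_i}c_i\{0,1,\cdots,N-1\}$. Writing $S_i:=N^{a_i}c_i\{0,1,\cdots,N-1\}$, it therefore suffices to show that the map $(z_1,\dots,z_k)\mapsto\sum_{i=1}^k N^{a_i}c_i z_i$ is injective on $\{0,1,\cdots,N-1\}^k$. Injectivity is precisely what guarantees $W=\bigoplus_{i=1}^k S_i$, since it rules out two distinct tuples producing the same sum.

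I would argue by contradiction. Suppose there are two distinct tuples $(z_1,\dots,z_k)$ and $(z_1',\dots,z_k')$ in $\{0,1,\cdots,N-1\}^k$ with $\sum_{i=1}^k N^{a_i}c_i z_i=\sum_{i=1}^k N^{a_i}c_i z_i'$. Setting $w_i:=z_i-z_i'$, each $w_i$ lies in $\{-(N-1),\cdots,N-1\}$, not all $w_i$ vanish, and we obtain the nontrivial relation $\sum_{i=1}^k N^{a_i}c_i w_i=0$. Because the $a_i$ are pairwise distinct, I may reorder the indices so that $a_1<a_2<\cdots<a_k$, and then let $m$ be the smallest index with $w_m\neq 0$, so that $a_m$ is the least exponent carrying a nonzero coefficient.

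Factoring $N^{a_m}$ out of the relation leaves $c_m w_m+\sum_{i>m}N^{a_i-a_m}c_i w_i=0$. Since $a_i-a_m\geq 1$ for every $i>m$, every term of the sum is divisible by $N$, so reducing modulo $N$ gives $c_m w_m\equiv 0\pmod N$. This is the one place where primality of $N$ is used: from $c_m\in\mathbb{Z}\setminus N\mathbb{Z}$ and $N$ prime we get $\gcd(c_m,N)=1$, hence $w_m\equiv 0\pmod N$. But $w_m\in\{-(N-1),\cdots,N-1\}$ together with $w_m\neq 0$ forces $1\leq|w_m|\leq N-1$, contradicting $N\mid w_m$. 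Thus no nontrivial relation exists, the representation is unique, and $W=\bigoplus_{i=1}^k N^{a_i}c_i\{0,1,\cdots,N-1\}$.

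There is essentially no serious obstacle here: the argument mirrors Claim 1 in the proof of Theorem \ref{th(1.2-1)}, and the only care needed is the bookkeeping of the minimal exponent $a_m$ among the nonzero differences, together with the single invocation of the primality of $N$ to pass from $c_m\notin N\mathbb{Z}$ to $\gcd(c_m,N)=1$.
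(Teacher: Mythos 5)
Your argument is correct and is essentially the paper's own proof: the paper simply refers to Claim~1 in the proof of Theorem~\ref{th(1.2-1)}, which uses exactly your device of taking the minimal exponent carrying a nonzero difference, reducing modulo $N$, and invoking $\gcd(c_m,N)=1$ to force the contradiction $N\mid w_m$ with $1\leq|w_m|\leq N-1$. No discrepancies to report.
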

\begin{proof}
This proof is the same as Claim 1, so we omit its proof.
\end{proof}
\subsection {\bf Proof of  Theorems  \ref{th(1.2)}\label{sect.3.3}}
Since the sequence $\left\{t_{k}\right\}_{k=1}^{\infty}$ is bounded, according to \cite[Lemma 2.4 and Proposition 2.6]{WX2024}, we can always assume that $b_{k}\geq2$, $t_{k}\geq1$  and $Nt_{k}|b_1 $ for all $k\geq1$ in the following study of the spectrality of $\mu_{\{b_k\},\{D_k\}}$. For this reason, in all that follows, we assume that $ \mathbf{s}_k\geq0$.

Define $b'_{k}=\frac{ b_k}{N^{\tau(b_k)}}$, $t'_{k}=\frac{ t_k}{N^{\tau(t_k)}}$ and $\mathbf{b}_k:=b'_1b'_2\cdots b'_k$ for all $k\geq1$.
Then we have $ \frac{b_1b_2\cdots b_k}{Nt_k}=\frac{N^{\mathbf{s}_k}\mathbf{b}_k}{t'_k} $ and
\begin{equation}\label{eq(3.5)}
\mathcal{Z}(\hat{\mu}_{\{b_k\},\{D_k\}})
=\bigcup_{k=1}^{\infty}\mathcal{Z}(\hat{\delta}_{b_{1}^{-1}b_{2}^{-1} \cdots b_{k}^{-1}D_{k}})
=\bigcup_{k=1}^{\infty}\frac{N^{\mathbf{s}_k}\mathbf{b}_k(\mathbb{Z}\setminus N\mathbb{Z})}{t'_k}.
\end{equation}
Let \begin{equation}\label{eq(3.7)}\mathbf{s}=\min\{\mathbf{s}_k: k\in\mathbb{N}^{+}\}\ \ \ \ \text{and}\ \ \ \  \mathbf{A} =\{k : \mathbf{s}_k=\mathbf{s}\ \text{for}\ k\in\mathbb{N}^{+}\}.\end{equation}
By \eqref{eq(3.5)}, we have \begin{equation}\label{eq(3.8)}
\mathcal{Z}(\hat{\mu}_{\{b_k\},\{D_k\}})=\left(\bigcup_{k\in\mathbf{A}} \frac{N^{\mathbf{s}}\mathbf{b}_k(\mathbb{Z}\setminus N\mathbb{Z})}{t'_k}\right)\bigcup\left(\bigcup_{k\in\mathbb{N}^{+}\setminus\mathbf{A} } \frac{N^{\mathbf{s}_k}\mathbf{b}_k(\mathbb{Z}\setminus N\mathbb{Z})}{t'_k}\right)
\end{equation}
and $\mathbf{s}_k>\mathbf{s} $ for all $k\in\mathbb{N}^{+}\setminus\mathbf{A}$.

\begin{lem}\label{lem(3.1)}
Let $\mu_{\{b_k\},\{D_k\}}$ and $\mathbf{A} $ be defined by \eqref{eq(2.08)} and \eqref{eq(3.7)}, respectively. If $\mu_{\{b_k\},\{D_k\}}$ is a spectral measure, then  $\mathbf{A}$ is a finite set.
\end{lem}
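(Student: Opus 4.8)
The plan is to argue by contradiction: assume $\mu_{\{b_k\},\{D_k\}}$ is spectral with spectrum $\Lambda$ (so $0\in\Lambda$ after a translation) and that $\mathbf{A}$ is infinite, and derive a contradiction. First I would record the arithmetic consequence of $\mathbf{A}$ being infinite. Since $\tau(b_1b_2\cdots b_k)$ is nondecreasing in $k$ while $\tau(Nt_k)=1+\tau(t_k)$ stays bounded (the sequence $\{t_k\}$ is bounded), the identity $\mathbf{s}_k=\tau(b_1\cdots b_k)-\tau(Nt_k)$ forces $\mathbf{s}_k\to\infty$ whenever $\tau(b_1\cdots b_k)\to\infty$; hence $\mathbf{A}$ infinite implies $\tau(b_1\cdots b_k)$ is bounded, so $N\nmid b_k$ for all large $k$ and $|\mathbf{b}_k|\to\infty$. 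The geometric picture is then that all the factors $\delta_{(b_1\cdots b_k)^{-1}D_k}$ with $k\in\mathbf{A}$ sit at the single, coarsest $N$-adic level $\mathbf{s}$.

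The technical core I would isolate is a counting lemma phrased through the $N$-adic valuation $v_N$ on $\mathbb{Q}$ (so $v_N(a/b)=\tau(a)-\tau(b)$ when $N\nmid b$): if $\Gamma\subset\mathbb{R}$ is any set for which every nonzero difference $\gamma-\gamma'$ is rational with $v_N(\gamma-\gamma')=\mathbf{s}$ exactly, then $\#\Gamma\le N$. Indeed, fixing $\gamma_0\in\Gamma$ and sending $\gamma\mapsto \overline{N^{-\mathbf{s}}(\gamma-\gamma_0)}\in\mathbb{Z}/N\mathbb{Z}$ is well defined on the localization $\mathbb{Z}_{(N)}$ and injective, since for distinct $\gamma,\gamma'$ the number $N^{-\mathbf{s}}(\gamma-\gamma')$ is an $N$-adic unit and so has nonzero residue. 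Applying this to the factor $\nu_1$ obtained by convolving the $\delta_{(b_1\cdots b_k)^{-1}D_k}$ over $k\in\mathbf{A}$, whose zero set $\mathcal{Z}(\hat\nu_1)=\bigcup_{k\in\mathbf{A}}\frac{N^{\mathbf{s}}\mathbf{b}_k}{t'_k}(\mathbb{Z}\setminus N\mathbb{Z})$ lies entirely at level $\mathbf{s}$, shows that every orthogonal set of $\nu_1$ has at most $N$ points. Since $\mathbf{A}$ is infinite, $\nu_1$ is a genuine infinite convolution of non-Dirac measures, so $L^2(\nu_1)$ is infinite dimensional and $\nu_1$ cannot be spectral.

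To transfer this to $\mu_{\{b_k\},\{D_k\}}$, I would factor $\mu_{\{b_k\},\{D_k\}}=\nu_1\ast\nu_2$, where $\nu_2$ collects the remaining factors (those with $\mathbf{s}_k>\mathbf{s}$), and introduce on $\Lambda$ the relation $\lambda\sim\lambda'$ iff $\lambda=\lambda'$ or $v_N(\lambda-\lambda')>\mathbf{s}$. By \eqref{eq(3.8)} every nonzero difference of spectrum points has $v_N\ge\mathbf{s}$, and the ultrametric inequality makes $\sim$ an equivalence relation; moreover two points lie in different classes precisely when their difference has $v_N=\mathbf{s}$, that is, lies in $\mathcal{Z}(\hat\nu_1)$. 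Choosing one representative per class yields a transversal $R$ all of whose differences are at level exactly $\mathbf{s}$, so $R$ is an orthogonal set of $\nu_1$, and the counting lemma bounds the number of equivalence classes by $N$.

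The main obstacle is the final step: turning the bound ``at most $N$ classes'' into a contradiction, i.e.\ showing that spectrality of $\mu_{\{b_k\},\{D_k\}}$ forces infinitely many level-$\mathbf{s}$ classes, equivalently that the transversal $R$ must play the role of a spectrum for the infinite convolution $\nu_1$ and hence be infinite, against $\#R\le N$. The cleanest instance is $\mathbf{A}=\mathbb{N}^+$, where every difference of $\Lambda$ is already at level $\mathbf{s}$, so $\#\Lambda\le N$ contradicts the fact that a spectrum of the infinite dimensional $L^2(\mu_{\{b_k\},\{D_k\}})$ must be infinite; I would dispose of this case first. The general case requires separating the $N$-adic scale of $\nu_1$ (level $\mathbf{s}$) from that of $\nu_2$ (levels $>\mathbf{s}$) inside the identity $Q_\Lambda\equiv 1$ of Proposition \ref{prop(2.3)}, so that summing $|\hat\mu|^2=|\hat\nu_1|^2|\hat\nu_2|^2$ over each class collapses, in the $\nu_1$-variable, to $Q^{\nu_1}_R\equiv 1$. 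The delicate point is that this separation is hampered by the coupling of the two factors through the $N$-coprime parts $\mathbf{b}_k$ and $t'_k$; here Lemma \ref{lem01} is the tool that guarantees the spectrum cannot collapse onto a single factor, which rules out the degenerate alternatives and forces the number of classes to be infinite, contradicting the bound $N$ and completing the proof.
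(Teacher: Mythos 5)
Your counting lemma and the reduction to ``at most $N$ level-$\mathbf{s}$ classes'' are correct, and your disposal of the special case $\mathbf{A}=\mathbb{N}^{+}$ works. But the general case is where the proof has to happen, and there you have a genuine gap: you never actually derive a contradiction from the statement that $\Lambda$ splits into at most $N$ classes with the transversal $R$ an orthogonal set of $\nu_1$. The assertion that spectrality of $\mu_{\{b_k\},\{D_k\}}$ forces $R$ to be a \emph{spectrum} of $\nu_1$ (hence infinite) is exactly the hard point, and the tools you name do not supply it. Proposition \ref{prop(4.10-1)} requires the factor with the coarser zero set to have finite support, which is precisely what fails when $\mathbf{A}$ is infinite; indeed the paper proves the present lemma \emph{before} invoking that proposition, so appealing to such a splitting here is circular. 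And Lemma \ref{lem01} only says that an orthogonal set of one convolution factor cannot be a spectrum of the whole convolution; it gives no lower bound on the number of classes and cannot ``force the number of classes to be infinite.'' A union of at most $N$ sets, none of which is individually a spectrum, can still a priori be a spectrum, so the argument as sketched does not close.

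What you are missing is the much shorter route the paper takes, which is where the boundedness of $\{t_k\}$ (essentially unused in your argument) enters. If $\mathbf{A}$ is infinite, pigeonhole gives $i_1<i_2$ in $\mathbf{A}$ with $t_{i_1}=t_{i_2}$. Then
$\mathcal{Z}\bigl(\hat{\delta}_{b_1^{-1}\cdots b_{i_2}^{-1}D_{i_2}}\bigr)=\frac{N^{\mathbf{s}}\mathbf{b}_{i_2}}{t'_{i_2}}(\mathbb{Z}\setminus N\mathbb{Z})$ is contained in $\frac{N^{\mathbf{s}}\mathbf{b}_{i_1}}{t'_{i_1}}(\mathbb{Z}\setminus N\mathbb{Z})=\mathcal{Z}\bigl(\hat{\delta}_{b_1^{-1}\cdots b_{i_1}^{-1}D_{i_1}}\bigr)$, because $\mathbf{b}_{i_1}\mid\mathbf{b}_{i_2}$ with quotient coprime to $N$. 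Hence deleting the $i_2$-th factor from the infinite convolution does not change $\mathcal{Z}(\hat{\mu}_{\{b_k\},\{D_k\}})$, so any spectrum of $\mu_{\{b_k\},\{D_k\}}$ is an orthogonal set of the deleted convolution $\nu$; since $\mu_{\{b_k\},\{D_k\}}=\nu\ast\delta_{b_1^{-1}\cdots b_{i_2}^{-1}D_{i_2}}$ with both factors non-Dirac, Lemma \ref{lem01} shows it cannot be a spectrum of $\mu_{\{b_k\},\{D_k\}}$ --- the desired contradiction. To salvage your framework you would need to replace the vague ``separation of scales'' step by this redundancy observation or an equally concrete quantitative argument.
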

\begin{proof}
Suppose, on the contrary, $\mathbf{A}$ is an infinite set. Since the sequence $\left\{t_{k}\right\}_{k=1}^{\infty}$ is bounded, there exist $i_1\neq i_2$ such that $t_{i_1}=t_{ i_2}$ and $\mathbf{s}_{i_1}=\mathbf{s}_{ i_2}=\mathbf{s}$, where $\mathbf{s}$ is defined by \eqref{eq(3.7)}. In view of \eqref{eq(3.5)}, we have $$\mathcal{Z}(\hat{\delta}_{b^{-1} b_{1}^{-1}b_{2}^{-1} \cdots b_{i_1}^{-1}D_{i_1}})= \frac{N^{\mathbf{s}}\mathbf{b}_{i_1}(\mathbb{Z}\setminus N\mathbb{Z})}{t'_{i_1}} \ \ \ \ \text{and} \ \ \ \
\mathcal{Z}(\hat{\delta}_{b^{-1} b_{1}^{-1}b_{2}^{-1} \cdots b_{i_2}^{-1}D_{i_2}})= \frac{N^{\mathbf{s}}\mathbf{b}_{i_2}(\mathbb{Z}\setminus N\mathbb{Z})}{t'_{i_2}}$$
and $t'_{i_1}=t'_{i_2}$ since $t_{i_1}=t_{ i_2}$ . Without loss of generality, we can assume that $i_1<i_2$. Combining this with the definition of  $\mathbf{b}_{i_1}$ and $\mathbf{b}_{i_2}$, we have $\mathcal{Z}(\hat{\delta}_{b^{-1} b_{1}^{-1}b_{2}^{-1} \cdots b_{i_2}^{-1}D_{i_2}})\subset\mathcal{Z}(\hat{\delta}_{b^{-1} b_{1}^{-1}b_{2}^{-1} \cdots b_{i_1}^{-1}D_{i_1}})$. Let $\nu =\ast_{k\in\mathbb{N}^{+}\setminus\{i_2\}}\delta_{b_{1}^{-1}b_{2}^{-1} \cdots b_{k}^{-1}D_k}$, then $ \mu_{\{b_k\},\{D_k\}}= \delta_{b^{-1} b_{1}^{-1}b_{2}^{-1} \cdots b_{i_2}^{-1}D_{i_2}} \ast\nu.$ Hence, $ \mathcal{Z}(\hat{\mu}_{\{b_k\},\{D_k\}})=\mathcal{Z}(\hat{\nu} )$. According to Lemma \ref{lem01}, $\mu_{\{b_k\},\{D_k\}}$ is not a spectral measure, which is a contradiction.
\end{proof}
The following proposition establishes the relationship on spectrality among the three measures $\mu_1$, $\mu_2$ and $ \mu_1\ast \mu_2$.
\begin{prop}\cite[Theorem 3.3]{Wu2024}\label{prop(4.10-1)}
Let $\mu=\mu_1\ast \mu_2$, where the support of $\mu_1$ is a finite set and $\mu_2$ is a periodic function. If $\mu $ is a spectral measure and satisfies the following $(*)$ condition
$$(*): \ \  If \ \ \lambda_{1}, \lambda_{2} \in \mathcal{Z}(\hat{\mu}_2) \backslash \mathcal{Z}(\hat{\mu}_1) \ \  and  \ \ \lambda_{1}-\lambda_{2} \in \mathcal{Z}(\hat{\mu}), \ \
  then  \ \  \lambda_{1}-\lambda_{2} \in \mathcal{Z}(\hat{\mu}_2) \backslash \mathcal{Z}(\hat{\mu}_1).$$
  Then  both $ \mu_1$ and $\mu_1$ are spectral measures.
\end{prop}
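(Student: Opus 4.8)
The plan is to run everything through the Jorgensen--Pedersen criterion (Proposition \ref{prop(2.3)}), exploiting that $\mu=\mu_1\ast\mu_2$ gives $\hat{\mu}=\hat{\mu}_1\hat{\mu}_2$ and hence $\mathcal{Z}(\hat{\mu})=\mathcal{Z}(\hat{\mu}_1)\cup\mathcal{Z}(\hat{\mu}_2)$. I would start from a spectrum $\Lambda$ of $\mu$ and, since any translate of a spectrum is again a spectrum, assume $0\in\Lambda$; then every nonzero $\lambda\in\Lambda$ lies in $\mathcal{Z}(\hat{\mu})$ by the orthogonality criterion \eqref{eq(2.6)}. I then split $\Lambda=\Lambda_1\sqcup\Lambda_2$, where $\Lambda_1=\Lambda\cap\mathcal{Z}(\hat{\mu}_1)$ and $\Lambda_2=\{\lambda\in\Lambda:\hat{\mu}_1(\lambda)\neq0\}$. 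Note $0\in\Lambda_2$ because $\hat{\mu}_1(0)=1$, and every nonzero $\lambda\in\Lambda_2$ must satisfy $\hat{\mu}_2(\lambda)=0$, i.e.\ $\lambda\in\mathcal{Z}(\hat{\mu}_2)\setminus\mathcal{Z}(\hat{\mu}_1)$.

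The first substantive step is to show that $\Lambda_2$ is an orthogonal set for $\mu_2$, and this is precisely where hypothesis $(*)$ is used. For distinct $\lambda_1,\lambda_2\in\Lambda_2$, orthogonality of $\Lambda$ for $\mu$ gives $\lambda_1-\lambda_2\in\mathcal{Z}(\hat{\mu})$. If both are nonzero they lie in $\mathcal{Z}(\hat{\mu}_2)\setminus\mathcal{Z}(\hat{\mu}_1)$, so $(*)$ forces $\lambda_1-\lambda_2\in\mathcal{Z}(\hat{\mu}_2)\setminus\mathcal{Z}(\hat{\mu}_1)\subset\mathcal{Z}(\hat{\mu}_2)$; the remaining case $\lambda_2=0$ is immediate since then $\lambda_1\in\mathcal{Z}(\hat{\mu}_2)$. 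Thus $(\Lambda_2-\Lambda_2)\setminus\{0\}\subset\mathcal{Z}(\hat{\mu}_2)$, so by Proposition \ref{prop(2.3)} the set $\Lambda_2$ is orthogonal for $\mu_2$ and $Q_{\Lambda_2}^{\mu_2}(x)\le1$.

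It remains to upgrade orthogonality to completeness, i.e.\ to prove $Q_{\Lambda_2}^{\mu_2}\equiv1$, and this is the step I expect to be the main obstacle. Writing the known identity $1\equiv Q_{\Lambda}^{\mu}(x)=\sum_{\lambda\in\Lambda}|\hat{\mu}_1(x+\lambda)|^2\,|\hat{\mu}_2(x+\lambda)|^2$, the difficulty is that the finitely supported factor $|\hat{\mu}_1(x+\lambda)|^2$ couples the two pieces $\Lambda_1$ and $\Lambda_2$ and prevents reading off $Q_{\Lambda_2}^{\mu_2}$ directly. My plan is to invoke the periodicity of $\mu_2$: since $\hat{\mu}_2$ and its zero set are periodic, I would average the master identity over one period so that the finitely supported $\mu_1$-factor separates out and the sum decouples over $\Lambda_1$ and $\Lambda_2$. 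Combined with the a priori bound $Q_{\Lambda_2}^{\mu_2}\le1$ from the previous step, this should force $Q_{\Lambda_2}^{\mu_2}\equiv1$, so that $\Lambda_2$ is a spectrum and $\mu_2$ is a spectral measure.

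Finally, for $\mu_1$, which is finitely supported with $L^2(\mu_1)$ of dimension $m=\#\,\mathrm{supp}(\mu_1)$, I would use the same periodic structure to select a transversal of $\Lambda$ across the $\mu_2$-directions, producing a set $\Lambda_1'$ of cardinality $m$ whose pairwise differences lie in $\mathcal{Z}(\hat{\mu}_1)$; since an orthogonal family of cardinality $m$ in the $m$-dimensional space $L^2(\mu_1)$ is automatically an orthogonal basis, $\Lambda_1'$ is a spectrum of $\mu_1$. Equivalently, once $\mu_2$ is known to be spectral one expects $\Lambda$ to factor as a direct sum $\Lambda_1'\oplus\Lambda_2$ of spectra of the two factors, from which the spectrality of $\mu_1$ can be read off. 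The genuinely delicate point throughout is the decoupling in the completeness step, where the interaction between the finitely supported $\mu_1$ and the periodic $\mu_2$ must be controlled.
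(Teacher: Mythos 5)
A preliminary remark: the paper does not prove this proposition at all; it is imported from \cite[Theorem 3.3]{Wu2024}, so there is no in-paper argument to measure yours against and your proposal must stand on its own. Its opening is sound and surely matches the cited proof's setup: normalize $0\in\Lambda$, split $\Lambda$ into $\Lambda_1=\Lambda\cap\mathcal{Z}(\hat{\mu}_1)$ and $\Lambda_2=\{\lambda\in\Lambda:\hat{\mu}_1(\lambda)\neq0\}$, and use $(*)$ exactly where it is needed to conclude $(\Lambda_2-\Lambda_2)\setminus\{0\}\subset\mathcal{Z}(\hat{\mu}_2)$, hence that $\Lambda_2$ is an orthogonal set of $\mu_2$. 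That part is correct.

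The proposal stops being a proof, however, at the two places you yourself flag, and these are the entire content of the proposition. (1) Completeness of $\Lambda_2$ for $\mu_2$: ``average the master identity over one period so that the sum decouples'' is a hope, not an argument. The obstruction is genuine: in $1\equiv\sum_{\lambda\in\Lambda}|\hat{\mu}_1(x+\lambda)|^2|\hat{\mu}_2(x+\lambda)|^2$ the $\Lambda_1$-terms need not vanish for $x\neq0$, and bounding $|\hat{\mu}_1(x+\lambda)|^2\le1$ on the $\Lambda_2$-terms only gives $\sum_{\lambda\in\Lambda_2}|\hat{\mu}_2(x+\lambda)|^2\geq 1-\sum_{\lambda\in\Lambda_1}|\hat{\mu}(x+\lambda)|^2$, which is weaker than the required identity $\equiv1$. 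Averaging the product $|\hat{\mu}_1|^2|\hat{\mu}_2|^2$ over a period of $\hat{\mu}_2$ does not separate the two factors --- the mean of $e^{2\pi i\theta x}$ over a period of length $T$ is $(e^{2\pi i\theta T}-1)/(2\pi i\theta T)$, not an indicator, so the cross-frequencies of the trigonometric polynomial $|\hat{\mu}_1|^2$ and the periodic function $|\hat{\mu}_2|^2$ do not decouple --- and you supply no computation showing otherwise. (2) Spectrality of $\mu_1$: you assert the existence of an $m$-element set $\Lambda_1'$ with $(\Lambda_1'-\Lambda_1')\setminus\{0\}\subset\mathcal{Z}(\hat{\mu}_1)$, but nothing in your construction produces it; elements of $\Lambda_1$ individually lie in $\mathcal{Z}(\hat{\mu}_1)$, yet their pairwise differences need not, and the hoped-for factorization $\Lambda=\Lambda_1'\oplus\Lambda_2$ is precisely what a proof would have to establish rather than ``expect.'' As written, both conclusions of the proposition rest on ``this should work,'' so the proposal is an outline with a genuine gap rather than a proof.
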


Define \begin{equation}\label{eq(3.9)}\omega_1=\ast_{k\in\mathbf{A}}\delta_{b_{1}^{-1}b_{2}^{-1} \cdots b_{k}^{-1}D_k} \ \ \ \text{and} \ \ \ \omega_2=\ast_{k\in\mathbb{N}^{+}\setminus\mathbf{A}}\delta_{b_{1}^{-1}b_{2}^{-1} \cdots b_{k}^{-1}D_k}.\end{equation}
Hence, \eqref{eq(2.08)} can be expressed as  $\mu_{\{b_k\},\{D_k\}}=\omega_1\ast \omega_2$.
\begin{lem}\label{lem(3.2)}
 Let $\mu_{\{b_k\},\{D_k\}}:=\omega_1\ast \omega_2$ be defined by \eqref{eq(3.9)}, and let $\left\{\lambda_{1}, \lambda_{2}\right\}$ be any orthogonal set  of $\mu_{\{b_k\},\{D_k\}}$.
 If  $\lambda_{1}, \lambda_{2} \in \mathcal{Z}(\hat{\omega}_2) \backslash \mathcal{Z}(\hat{\omega}_1)$, then $\lambda_{1}-\lambda_{2} \in \mathcal{Z}(\hat{\omega}_2) \backslash \mathcal{Z}(\hat{\omega}_1)$.
\end{lem}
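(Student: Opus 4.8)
The plan is to read the conclusion straight off the $N$-adic structure of the two zero sets displayed in \eqref{eq(3.8)}, exploiting that the minimal exponent $\mathbf{s}$ is attained \emph{exactly} on the index set $\mathbf{A}$. Since $\mu_{\{b_k\},\{D_k\}}=\omega_1\ast\omega_2$ gives $\hat{\mu}_{\{b_k\},\{D_k\}}=\hat{\omega}_1\cdot\hat{\omega}_2$, the zero set splits as $\mathcal{Z}(\hat{\mu}_{\{b_k\},\{D_k\}})=\mathcal{Z}(\hat{\omega}_1)\cup\mathcal{Z}(\hat{\omega}_2)$, and the hypothesis that $\{\lambda_1,\lambda_2\}$ (with $\lambda_1\neq\lambda_2$) is an orthogonal set of $\mu_{\{b_k\},\{D_k\}}$ means precisely $\lambda_1-\lambda_2\in\mathcal{Z}(\hat{\mu}_{\{b_k\},\{D_k\}})$. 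So it will suffice to prove $\lambda_1-\lambda_2\notin\mathcal{Z}(\hat{\omega}_1)$; membership $\lambda_1-\lambda_2\in\mathcal{Z}(\hat{\omega}_2)$ then follows automatically from the splitting, and once I show the two zero sets are disjoint this already lands $\lambda_1-\lambda_2$ in $\mathcal{Z}(\hat{\omega}_2)\setminus\mathcal{Z}(\hat{\omega}_1)$.

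The key device is to extend $\tau=\tau_N$ to nonzero rationals by $\tau(p/q)=\tau(p)-\tau(q)$, which is the $N$-adic valuation and hence obeys the ultrametric inequality $\tau(x-y)\geq\min\{\tau(x),\tau(y)\}$. From the definitions $\mathbf{b}_k=b'_1\cdots b'_k$ and $t'_k=t_k/N^{\tau(t_k)}$, every factor $\mathbf{b}_k$, every $t'_k$, and every element of $\mathbb{Z}\setminus N\mathbb{Z}$ is coprime to the prime $N$; therefore each point of $\frac{N^{\mathbf{s}_k}\mathbf{b}_k(\mathbb{Z}\setminus N\mathbb{Z})}{t'_k}$ is a rational with $\tau$-value exactly $\mathbf{s}_k$. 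Reading \eqref{eq(3.8)}, this shows every element of $\mathcal{Z}(\hat{\omega}_1)$ has $\tau$-value exactly $\mathbf{s}$, whereas every element of $\mathcal{Z}(\hat{\omega}_2)$ has $\tau$-value at least $\mathbf{s}+1$, since $\mathbf{s}_k>\mathbf{s}$ for all $k\in\mathbb{N}^{+}\setminus\mathbf{A}$ by \eqref{eq(3.7)}. In particular $\mathcal{Z}(\hat{\omega}_1)\cap\mathcal{Z}(\hat{\omega}_2)=\emptyset$, so $\mathcal{Z}(\hat{\omega}_2)\setminus\mathcal{Z}(\hat{\omega}_1)=\mathcal{Z}(\hat{\omega}_2)$.

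To finish, I would argue as follows. By hypothesis $\lambda_1,\lambda_2\in\mathcal{Z}(\hat{\omega}_2)$, so $\tau(\lambda_1)\geq\mathbf{s}+1$ and $\tau(\lambda_2)\geq\mathbf{s}+1$; the ultrametric inequality then forces $\tau(\lambda_1-\lambda_2)\geq\mathbf{s}+1$. Since every element of $\mathcal{Z}(\hat{\omega}_1)$ has $\tau$-value exactly $\mathbf{s}$, it follows that $\lambda_1-\lambda_2\notin\mathcal{Z}(\hat{\omega}_1)$. Combining this with $\lambda_1-\lambda_2\in\mathcal{Z}(\hat{\omega}_1)\cup\mathcal{Z}(\hat{\omega}_2)$ yields $\lambda_1-\lambda_2\in\mathcal{Z}(\hat{\omega}_2)=\mathcal{Z}(\hat{\omega}_2)\setminus\mathcal{Z}(\hat{\omega}_1)$, which is exactly the claim.

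I do not expect a genuine obstacle here: the entire argument pivots on the single observation that $\mathbf{s}$ is the \emph{strict} minimum of the exponents $\{\mathbf{s}_k\}$ off $\mathbf{A}$, and that stripping all powers of $N$ from $b_k$ and $t_k$ leaves quantities coprime to $N$, so the valuation $\tau$ separates $\mathcal{Z}(\hat{\omega}_1)$ from $\mathcal{Z}(\hat{\omega}_2)$ cleanly. The only points needing care are checking that $\tau$ is well defined and additive on these rationals (where primality of $N$ is used) and recording the strict gap $\mathbf{s}_k\geq\mathbf{s}+1$ on $\mathbb{N}^{+}\setminus\mathbf{A}$, both immediate from \eqref{eq(3.6.1)} and \eqref{eq(3.7)}.
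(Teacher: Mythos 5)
Your proof is correct and follows exactly the route the paper intends: the paper omits the argument as ``easy to verify by \eqref{eq(3.8)}'', and your $N$-adic valuation argument (elements of $\mathcal{Z}(\hat{\omega}_1)$ have valuation exactly $\mathbf{s}$, elements of $\mathcal{Z}(\hat{\omega}_2)$ have valuation at least $\mathbf{s}+1$, and the ultrametric inequality preserves the latter under differences) is precisely the verification being alluded to. No gaps; the only hypotheses you rely on — primality of $N$, coprimality of $\mathbf{b}_k$, $t'_k$ to $N$, and the strict inequality $\mathbf{s}_k>\mathbf{s}$ off $\mathbf{A}$ — are all in place in the paper's setup.
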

\begin{proof}
This proof is easy to verify by \eqref{eq(3.8)}, so we omit it here.
\end{proof}

\begin{prop}\label{prop(3.3)}
Let $\mu_{\{b_k\},\{D_k\}}=\omega_1\ast \omega_2$ be defined by \eqref{eq(3.9)}. If $\mu_{\{b_k\},\{D_k\}}$ is a spectral measure, then $\omega_1$ and $\omega_2$ are also spectral measures.
\end{prop}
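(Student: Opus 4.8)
The plan is to read Proposition~\ref{prop(3.3)} as essentially a corollary of Proposition~\ref{prop(4.10-1)}, applied to the factorization $\mu_{\{b_k\},\{D_k\}}=\omega_1\ast\omega_2$ recorded in \eqref{eq(3.9)} (and coming from \eqref{eq(2.08)}), with the identification $\mu_1=\omega_1$ and $\mu_2=\omega_2$. Under this identification the conclusion of Proposition~\ref{prop(4.10-1)} is exactly the spectrality of $\omega_1$ and $\omega_2$, so the entire task reduces to checking its four hypotheses: that $\mu_{\{b_k\},\{D_k\}}$ factors as the convolution of the two measures (immediate from the definitions), that $\omega_1$ has finite support, that $\omega_2$ is the periodic factor, and that the $(*)$ condition holds; the spectrality of $\mu_{\{b_k\},\{D_k\}}$ is the standing hypothesis of the proposition.

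First I would dispose of the two hypotheses that the preceding lemmas hand us directly. Since $\mu_{\{b_k\},\{D_k\}}$ is spectral, Lemma~\ref{lem(3.1)} forces the index set $\mathbf{A}$ of \eqref{eq(3.7)} to be finite; as $\omega_1=\ast_{k\in\mathbf{A}}\delta_{b_1^{-1}\cdots b_k^{-1}D_k}$ is then a convolution of finitely many atomic measures, each supported on the $N$-point set $b_1^{-1}\cdots b_k^{-1}D_k$, its support is finite, matching the requirement on $\mu_1$. For the $(*)$ condition, I would observe via \eqref{eq(2.6)} that a pair $\{\lambda_1,\lambda_2\}$ is an orthogonal set of $\mu_{\{b_k\},\{D_k\}}$ precisely when $\lambda_1-\lambda_2\in\mathcal{Z}(\hat\mu_{\{b_k\},\{D_k\}})$; hence the hypothesis of $(*)$ coincides verbatim with the hypothesis of Lemma~\ref{lem(3.2)}, whose conclusion $\lambda_1-\lambda_2\in\mathcal{Z}(\hat\omega_2)\setminus\mathcal{Z}(\hat\omega_1)$ is exactly the conclusion of $(*)$. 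Thus $(*)$ holds with no further computation.

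The step I expect to be the main obstacle is verifying that $\omega_2$ genuinely plays the role of the periodic factor $\mu_2$ in Proposition~\ref{prop(4.10-1)}: unlike $\omega_1$, it is an infinite convolution over the indices $k\in\mathbb{N}^+\setminus\mathbf{A}$, and it is not literally a periodic measure. Here I would work from the explicit zero set in \eqref{eq(3.8)}, using that every factor contributing to $\hat\omega_2$ has $\mathbf{s}_k>\mathbf{s}$, so that the zeros of $\hat\omega_2$ sit in the strictly higher-order part $\bigcup_{k\notin\mathbf{A}} N^{\mathbf{s}_k}\mathbf{b}_k(\mathbb{Z}\setminus N\mathbb{Z})/t'_k$, disjoint from the $N^{\mathbf{s}}$-level zeros carried by $\omega_1$. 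Combined with the standing normalization $b_k\geq2$, $t_k\geq1$, $Nt_k\mid b_1$ (under which all scalings are integral), this is what I would use to present $\omega_2$ in the form required by the cited theorem. Once that identification is secured, every hypothesis of Proposition~\ref{prop(4.10-1)} is in place, and it returns the spectrality of both $\omega_1$ and $\omega_2$, completing the proof.
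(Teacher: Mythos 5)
Your proposal is correct and follows exactly the paper's route: the paper's entire proof of Proposition~\ref{prop(3.3)} is the single sentence that it ``is easily obtained from Lemma~\ref{lem(3.1)}, Lemma~\ref{lem(3.2)} and Proposition~\ref{prop(4.10-1)}'', i.e.\ the same three ingredients you assemble (finiteness of $\mathbf{A}$ giving $\omega_1$ finite support, Lemma~\ref{lem(3.2)} giving the $(*)$ condition, and the cited factorization theorem). Your additional concern about verifying that $\omega_2$ qualifies as the ``periodic'' factor is a legitimate point that the paper does not address at all, so if anything your write-up is more careful than the original.
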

\begin{proof}
This is easily obtained from Lemma \ref{lem(3.1)}, Lemma \ref{lem(3.2)} and Proposition \ref{prop(4.10-1)}.
\end{proof}
Based on the above preparations, now we can prove Theorem \ref{th(1.2)}.
\begin{proof}[\bf Proof of  Theorem \ref{th(1.2)}]
Since $\mu_{\{b_k\},\{D_k\}}$ is a spectral measure, it follows from Proposition \ref{prop(3.3)} that $\omega_1$ and $\omega_2$ are also spectral measures. Let $\Lambda$ be a spectrum of $\omega_1$. Then the cardinality of $\Lambda$ is equivalent to the dimension of $L^2(\omega_1)$, that is,
$\#\Lambda=N^{\#\mathbf{A}}$.
According to \eqref{eq(3.8)}, we have
\begin{equation}\label{eq(2.4.1)}
\mathcal{Z}(\hat{\omega}_{1})= \bigcup_{k\in\mathbf{A}} \frac{N^{\mathbf{s}}\mathbf{b}_k(\mathbb{Z}\setminus N\mathbb{Z})}{t'_k} \subset\frac{N^{\mathbf{s}}(\mathbb{Z}\setminus N\mathbb{Z})}{\tilde{t} },
\end{equation}
where $\tilde{t}\in\mathbb{Z}\setminus N\mathbb{Z}$ is the least common multiple of all elements in $\{ t'_k : k\in\mathbf{A}\}$. From \eqref{eq(2.4.1)}, we conclude that $L^2(\omega_{1})$ contains at most $N$ mutually orthogonal
exponential functions. This implies that $\#\Lambda=N^{\#\mathbf{A}}\leq N$, and further we obtain $\#\mathbf{A}=1 $. Hence, there exists  unique $i_1\in\mathbb{N}^+$ such that $ \mathbf{s}_{i_1}=\mathbf{s}$  and $ \mathbf{s}_{j}>\mathbf{s}_{i_1}$ for all $j\in\mathbb{N}^{+}\setminus\{i_1\}$.

Since $\omega_2$ is a spectral measures, we can replace the above $\mu_{\{b_k\},\{D_k\}}$ with $\omega_2$ and repeat the above process. Hence, there exists  unique $i_2\in\mathbb{N}^+\setminus\{i_1\}$ such that  $\mathbf{s}_{j}>\mathbf{s}_{i_2}>\mathbf{s}_{i_1}$ for all $j\in\mathbb{N}^{+}\setminus\{i_1, i_2\}$. Repeat this operations, we can get that  $ \mathbf{s}_i\neq\mathbf{s}_j$ for all $i\neq j\in\mathbb{N}^{+}$. Combining this with  Theorem \ref{th(1.2-1)}, the proof is completed.
\end{proof}
\section{\bf Proof of Theorem \ref{th(1.4)}} \label{sect.4}
 \
In this section, we  focus on proving  $``(ii)\Longrightarrow (i)"$ of Theorem \ref{th(1.4)}.
We first set up some notations in  the rest of this paper and give some lemmas and propositions that are needed in the subsequent proof.

Let  \begin{equation}\label{eq(3.4.4)}
\mathfrak{n}_k:=\max\{j\geq k: \mathbf{s}_k \geq\mathbf{s}_j\}
 \end{equation}
for all $k\geq1$.

\begin{lem}\label{lem(4.1-1)}With some of the above notions, suppose that $\mathbf{s}_i\neq\mathbf{s}_j$ for all $i\neq j$, then $\alpha:=\sup_{k\geq1}\left\{\mathfrak{n}_k-k\right\}<\infty$.
%
\end{lem}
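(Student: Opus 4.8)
The plan is to write $\mathbf{s}_k$ as a difference of a monotone quantity and a bounded one, and to exploit this splitting. Concretely, since $N$ is prime the $N$-adic valuation $\tau$ is additive on products, so setting $P_k:=\tau(b_1b_2\cdots b_k)=\sum_{i=1}^{k}\tau(b_i)$ and $q_k:=\tau(Nt_k)$ we have $\mathbf{s}_k=P_k-q_k$. Here each $\tau(b_i)\ge 0$, so $\{P_k\}_{k\ge1}$ is non-decreasing; and because $\{t_k\}_{k=1}^{\infty}$ is bounded, the integers $t_k$ take only finitely many values, whence $C:=\sup_{k\ge1}\tau(Nt_k)<\infty$, while $q_k=\tau(Nt_k)\ge 1$ since $N\mid Nt_k$. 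Thus $1\le q_k\le C$ for every $k$. I would first note that $\mathfrak{n}_k$ is well defined: as the $\mathbf{s}_j$ are pairwise distinct nonnegative integers, at most $\mathbf{s}_k+1$ of them are $\le \mathbf{s}_k$, so $\{j\ge k:\mathbf{s}_k\ge\mathbf{s}_j\}$ is finite and its maximum exists.

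The crux is to bound the length of the interval $[k,\mathfrak{n}_k]$ uniformly in $k$. Fix $k$. Since $\mathfrak{n}_k$ belongs to the defining set, $\mathbf{s}_{\mathfrak{n}_k}\le\mathbf{s}_k$, i.e. $P_{\mathfrak{n}_k}-q_{\mathfrak{n}_k}\le P_k-q_k$; combining this with $q_{\mathfrak{n}_k}\le C$ and $q_k\ge 1$ gives $P_{\mathfrak{n}_k}-P_k\le q_{\mathfrak{n}_k}-q_k\le C-1$. Now for any $j$ with $k\le j\le\mathfrak{n}_k$, monotonicity of $P$ yields $P_k\le P_j\le P_{\mathfrak{n}_k}\le P_k+(C-1)$, while $1\le q_j\le C$; hence $\mathbf{s}_j=P_j-q_j$ lies in the integer interval $[P_k-C,\,P_k+C-2]$, which contains only $2C-1$ integers. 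Because $\mathbf{s}_i\ne\mathbf{s}_j$ for $i\ne j$, the $\mathfrak{n}_k-k+1$ numbers $\{\mathbf{s}_j:k\le j\le\mathfrak{n}_k\}$ are distinct, forcing $\mathfrak{n}_k-k+1\le 2C-1$, that is $\mathfrak{n}_k-k\le 2C-2$. Taking the supremum over $k$ gives $\alpha\le 2C-2<\infty$, as desired.

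The main obstacle is conceptual rather than computational: a naive count of the indices $j$ with $\mathbf{s}_j\le\mathbf{s}_k$ does \emph{not} control $\mathfrak{n}_k-k$, since the interval $[k,\mathfrak{n}_k]$ may well contain many indices with large $\mathbf{s}_j$. The decisive observation is instead to confine the entire range of $\mathbf{s}_j$ over $[k,\mathfrak{n}_k]$ to a window of fixed width, which is possible only after separating the non-decreasing $N$-adic contribution $\tau(b_1\cdots b_k)$ from the uniformly bounded contribution $\tau(Nt_k)$; once this window has bounded size, the hypothesis that the $\mathbf{s}_j$ are pairwise distinct immediately closes the argument.
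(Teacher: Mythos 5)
Your proof is correct, and it takes a genuinely different (and leaner) route than the paper's. The paper argues by contradiction: assuming $\mathfrak{n}_{k_0}-k_0$ is very large, it first derives $\tau(b_{k_0+1}\cdots b_{\mathfrak{n}_{k_0}})\le \tau(t_{\mathfrak{n}_{k_0}})-\tau(t_{k_0})\le L$ (essentially your inequality $P_{\mathfrak{n}_k}-P_k\le q_{\mathfrak{n}_k}-q_k$), then locates a window of $M+2$ consecutive indices on which every $\tau(b_j)=0$, and finally pigeonholes on the \emph{values} of $t_k$ (using $1\le t_k\le M$) to produce two indices with equal $t$ and equal $\tau(b_1\cdots b_\cdot)$, hence equal $\mathbf{s}$. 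You bypass the two-stage pigeonhole entirely: after isolating the monotone part $P_k=\tau(b_1\cdots b_k)$ from the bounded part $q_k=\tau(Nt_k)\in[1,C]$, you confine all of $\{\mathbf{s}_j: k\le j\le \mathfrak{n}_k\}$ to an integer window of width $2C-1$ and let pairwise distinctness do the counting in one step. This buys an explicit and sharper bound, $\alpha\le 2C-2=2\sup_k\tau(t_k)$, versus the paper's implicit bound of order $(L+2)(M+2)$, and it shows the lemma only needs $\sup_k\tau(t_k)<\infty$ rather than the full boundedness of $\{t_k\}$ (the latter is of course used elsewhere in the paper, e.g.\ in Lemma \ref{lem(3.1)}). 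All the individual steps check out: $P_k$ is non-decreasing because $\tau$ is additive on products ($N$ prime) and $\tau(b_i)\ge 0$; $q_k\ge 1$ since $N\mid Nt_k$; and $\mathbf{s}_{\mathfrak{n}_k}\le\mathbf{s}_k$ holds because $\mathfrak{n}_k$ lies in the defining set. Your preliminary remark that $\mathfrak{n}_k$ is well defined (the $\mathbf{s}_j$ being distinct integers bounded below) is a point the paper glosses over.
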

\begin{proof} Since the sequence of positive integers  $\{t_{k}\}_{k=1}^{\infty}$ is bounded, there exist two positive integers $ M$ and $L$ such that $ 1\leq t_k\leq M$ and $0\leq\tau(t_k)\leq L$ far all $k\geq1$. Suppose $\left\{\mathfrak{n}_k-k\right\}_{k=1}^{\infty}$ is unbounded, then there exists an a positive integer $ k_{0}$  such that $\mathfrak{n}_{k_0}-k_0\geq(L+2)( M+2)$.
According to the definition of $\mathbf{s}_{k_0}$ and $\mathfrak{n}_{k_0}$, we have
\begin{equation}\label{eq(3.5.002)}\tau\left(\frac{b_1b_2\cdots b_{\mathfrak{n}_{k_0}}}{Nt_{\mathfrak{n}_{k_0}}}\right)\leq \tau\left(\frac{b_1b_2\cdots b_{k_0} }{Nt_{k_0}}\right).
\end{equation}
Applying \eqref{eq(3.5.002)}, one may get
\begin{equation}\label{eq(3.5.00200)}\begin{aligned}
\tau ( b_{k_{0}+1}b_{k_{0}+2}\cdots b_{\mathfrak{n}_{k_0}})
&=\tau( b_1b_2\cdots b_{\mathfrak{n}_{k_0}} )-\tau(b_1b_2\cdots b_{k_0} )\\&\leq \tau({t_{\mathfrak{n}_{k_0}}})-\tau(t_{k_0})\\&\leq L.
\end{aligned}\end{equation}
For any $i\in\{1, 2,\cdots, \mathfrak{n}_{k_0}-k_0-M-1\}$, write
$$A_i=\{k_0+i, k_0+i+1,\cdots,k_0+i+M+1\}.$$
We claim there exists $i_0\in\{ 1, 2,\cdots, \mathfrak{n}_{k_0}-k_0-M-1\}$ such that $ \Sigma_{j\in A_{i_0}}\tau( b_{j})=0$. Otherwise, for any $i\in T$, there exists $j_i\in A_i$ such that $\tau( b_{j_i})\geq1$, where $T=\{ 1, 2+M+1,3+2(M+1),\cdots, L+1+L(M+1)\}$. Then $\tau ( b_{k_{0}+1}b_{k_{0}+2}\cdots b_{\mathfrak{n}_{k_0}})\geq \Sigma_{i\in T  }\tau( b_{j_i})\geq L+1,$ which  contradicts \eqref{eq(3.5.00200)}. Hence, the claim  follows.

Combining these with $1\leq t_k\leq M$, we conclude that there exist $i_1\neq i_2\in\{i_{0},i_{0}+1,\cdots,i_{0}+M\}$ such that $t_{i_1} = t_{i_2}$ and $\tau(b_1b_2\cdots b_{i_1})=\tau(b_1b_2\cdots b_{i_2})$.
This illustrates that
$$\mathbf{s}_{i_1}=\tau(\frac{b_1b_2\cdots b_{i_1}}{Nt_{i_1}})=\tau( \frac{b_1b_2\cdots b_{i_2}}{Nt_{i_2}})=\mathbf{s}_{i_2},$$
which contradicts the fact $\mathbf{s}_{i_1}\neq\mathbf{s}_{i_2}$.
\end{proof}
\begin{lem}\label{lem(4.1)}
With some of the above notions, suppose that $\mathbf{s}_j>\mathbf{s}_i$,   then following two  statements hold.
\begin{enumerate}[\rm(i).]
 \item    $\mathfrak{n}_j\geq \mathfrak{n}_i$ and $\mathbf{b}_{\mathfrak{n}_i} \mid \mathbf{b}_{\mathfrak{n}_j}$.
\item For any $\lambda_1\in N^{\mathbf{s}_{i}} \mathbf{b}_{\mathfrak{n}_i}(\mathbb{Z}\setminus N\mathbb{Z})$ and $\lambda_2\in N^{\mathbf{s}_{j}} \mathbf{b}_{\mathfrak{n}_j}(\mathbb{Z}\setminus N\mathbb{Z})$, we have $\lambda_1+\lambda_2\in N^{\mathbf{s}_{i}} \mathbf{b}_{\mathfrak{n}_i} (\mathbb{Z}\setminus N\mathbb{Z}).$
 \end{enumerate}
\end{lem}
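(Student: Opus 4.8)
The plan is to prove the two statements in order, since (ii) is a short congruence computation once (i) is available; the whole content of the lemma is really the index comparison in (i). Throughout I would use the three defining properties of $\mathfrak{n}_k$ coming from \eqref{eq(3.4.4)}: that $\mathfrak{n}_k\geq k$ (the index $j=k$ always lies in the set $\{j\geq k:\mathbf{s}_k\geq\mathbf{s}_j\}$), that $\mathbf{s}_{\mathfrak{n}_k}\leq\mathbf{s}_k$ (membership of the maximizer), and that $\mathbf{s}_l>\mathbf{s}_k$ for every $l>\mathfrak{n}_k$ (by maximality of $\mathfrak{n}_k$). To get $\mathfrak{n}_j\geq\mathfrak{n}_i$ I would split according to the position of $\mathfrak{n}_i$ relative to $j$. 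If $\mathfrak{n}_i<j$, then since $\mathfrak{n}_j\geq j$ we immediately have $\mathfrak{n}_j\geq j>\mathfrak{n}_i$. If instead $\mathfrak{n}_i\geq j$, I would check that $\mathfrak{n}_i$ itself is an admissible index in the definition of $\mathfrak{n}_j$: indeed $\mathbf{s}_{\mathfrak{n}_i}\leq\mathbf{s}_i<\mathbf{s}_j$ by the defining property of $\mathfrak{n}_i$ together with the hypothesis $\mathbf{s}_j>\mathbf{s}_i$, so $\mathfrak{n}_i\in\{l\geq j:\mathbf{s}_j\geq\mathbf{s}_l\}$ and hence $\mathfrak{n}_j=\max\{l\geq j:\mathbf{s}_j\geq\mathbf{s}_l\}\geq\mathfrak{n}_i$. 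Either way $\mathfrak{n}_j\geq\mathfrak{n}_i$.

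The divisibility $\mathbf{b}_{\mathfrak{n}_i}\mid\mathbf{b}_{\mathfrak{n}_j}$ is then a formality: since $b'_l=b_l/N^{\tau(b_l)}\in\mathbb{Z}$ for every $l$, the factorization $\mathbf{b}_{\mathfrak{n}_j}=\mathbf{b}_{\mathfrak{n}_i}\prod_{l=\mathfrak{n}_i+1}^{\mathfrak{n}_j}b'_l$ exhibits $\mathbf{b}_{\mathfrak{n}_j}$ as an integer multiple of $\mathbf{b}_{\mathfrak{n}_i}$, which settles (i). For (ii), I would write $\lambda_1=N^{\mathbf{s}_i}\mathbf{b}_{\mathfrak{n}_i}a$ and $\lambda_2=N^{\mathbf{s}_j}\mathbf{b}_{\mathfrak{n}_j}c$ with $a,c\in\mathbb{Z}\setminus N\mathbb{Z}$, and factor out $N^{\mathbf{s}_i}\mathbf{b}_{\mathfrak{n}_i}$ to obtain
$$
\lambda_1+\lambda_2=N^{\mathbf{s}_i}\mathbf{b}_{\mathfrak{n}_i}\Bigl(a+N^{\mathbf{s}_j-\mathbf{s}_i}\frac{\mathbf{b}_{\mathfrak{n}_j}}{\mathbf{b}_{\mathfrak{n}_i}}\,c\Bigr).
$$
By part (i) the quotient $\mathbf{b}_{\mathfrak{n}_j}/\mathbf{b}_{\mathfrak{n}_i}$ is an integer, and $\mathbf{s}_j-\mathbf{s}_i\geq1$ because $\mathbf{s}_j>\mathbf{s}_i$ are integers; hence the second summand inside the parentheses is divisible by $N$, whereas $N\nmid a$. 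Consequently the bracketed factor is an integer not divisible by $N$, i.e. it lies in $\mathbb{Z}\setminus N\mathbb{Z}$, which is exactly the assertion $\lambda_1+\lambda_2\in N^{\mathbf{s}_i}\mathbf{b}_{\mathfrak{n}_i}(\mathbb{Z}\setminus N\mathbb{Z})$.

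The only genuinely nontrivial point I anticipate is the case split establishing $\mathfrak{n}_j\geq\mathfrak{n}_i$ in (i); everything after that, including the divisibility and the congruence argument in (ii), is mechanical. I would also note that this argument uses only the hypothesis $\mathbf{s}_j>\mathbf{s}_i$ rather than the full standing distinctness assumption $\mathbf{s}_i\neq\mathbf{s}_j$ for all $i\neq j$, and that the maxima defining $\mathfrak{n}_i,\mathfrak{n}_j$ are taken over nonempty sets bounded above (the latter guaranteed by Lemma \ref{lem(4.1-1)}), so no well-definedness issue arises.
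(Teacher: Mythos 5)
Your proposal is correct and follows essentially the same route as the paper: the index comparison $\mathfrak{n}_j\geq\mathfrak{n}_i$ via the defining properties of $\mathfrak{n}_k$ (the paper phrases it as a contradiction from $\mathfrak{n}_i>\mathfrak{n}_j$ forcing $\mathbf{s}_{\mathfrak{n}_i}>\mathbf{s}_j$, while you argue directly that $\mathfrak{n}_i$ is admissible in the maximum defining $\mathfrak{n}_j$ — a trivial variation), followed by the factorization $\mathbf{b}_{\mathfrak{n}_j}=\mathbf{b}_{\mathfrak{n}_i}b'_{\mathfrak{n}_i+1}\cdots b'_{\mathfrak{n}_j}$ and the same congruence computation for (ii).
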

\begin{proof}
(i). Since $\mathbf{s}_j>\mathbf{s}_i$, we have $\mathbf{s}_j>\mathbf{s}_i\geq\mathbf{s}_{\mathfrak{n}_i}$. Suppose $\mathfrak{n}_i>\mathfrak{n}_j$, then $\mathbf{s}_{\mathfrak{n}_i}>\mathbf{s}_j$, which contradicts the fact $\mathbf{s}_j>\mathbf{s}_{\mathfrak{n}_i}$. Hence, $\mathfrak{n}_j\geq \mathfrak{n}_i$. According to the definition of $\mathbf{b}_{\mathfrak{n}_j}$, we can get $\mathbf{b}_{\mathfrak{n}_j}=\mathbf{b}_{\mathfrak{n}_i}b'_{\mathfrak{n}_{i+1}}\cdots b'_{\mathfrak{n}_j}$, which means $\mathbf{b}_{\mathfrak{n}_i} \mid \mathbf{b}_{\mathfrak{n}_j}$.

(ii). For any $\lambda_1\in N^{\mathbf{s}_{i}} \mathbf{b}_{\mathfrak{n}_i}(\mathbb{Z}\setminus N\mathbb{Z})$ and $\lambda_2\in N^{\mathbf{s}_{j}} \mathbf{b}_{\mathfrak{n}_j}(\mathbb{Z}\setminus N\mathbb{Z})$,
we have $\lambda_1=N^{\mathbf{s}_{i}} \mathbf{b}_{\mathfrak{n}_i}l_{1}$ and $\lambda_2= N^{\mathbf{s}_{j}} \mathbf{b}_{\mathfrak{n}_j}l_{2}$
for some $l_1, l_2\in\mathbb{Z}\setminus N\mathbb{Z}$.
Since $\mathbf{s}_{j}> \mathbf{s}_{i}$,  it follows from (i) that
$$\lambda_1+\lambda_2=  N^{\mathbf{s}_{i}} \mathbf{b}_{\mathfrak{n}_i}l_{1}+N^{\mathbf{s}_{j}} \mathbf{b}_{\mathfrak{n}_j}l_{2}\in N^{\mathbf{s}_{i}} \mathbf{b}_{\mathfrak{n}_i}(\mathbb{Z}\setminus N\mathbb{Z}).$$
Hence, the lemma follows.\end{proof}

We will use the above lemmas to obtain the following Proposition \ref{prop(4.2.1)}. Before we do that, we need to give some important symbolic definitions.
For any $k\in\mathbb{N}^+$,  we write
\begin{equation} \label{eq(4.3)}
\mu_{k}=\delta_{b_{1 }^{-1} D_{1}} \ast  \delta_{ b_{1}^{-1} b_{2}^{-1}   D_{2}}\ast \cdots  \ast \delta_{  b_{1}^{-1} b_{2}^{-1}\cdots b_{k}^{-1}  D_{k}}
\end{equation}
and
\begin{equation} \label{eq(4.2)}
\nu_{>k}=\delta_{b_{k+1 }^{-1} D_{ k+1} } \ast \delta_{ b_{k+1 }^{-1} b_{k+2 }^{-1} D_{k+2}}\ast \delta_{ b_{k+1 }^{-1} b_{k+2 }^{-1} b_{k+3 }^{-1}D_{k+3}} \ast\cdots.
\end{equation}

For any two positive integers $k'>k$, suppose that $\mathbf{s}_i\neq\mathbf{s}_j$ for all $i\neq j\in\mathbb{N}^{+}$,  we define  \begin{equation} \label{eq(4.3-2)} \Lambda_{k, k'}:=\bigcup_{\lambda\in \mathcal{B}_{k, k'}}(\lambda+b_1b_2\cdots b_{m_{k'}}z_{\lambda}),
\end{equation}
where $\Lambda_{k,k'}$ satisfy the following three conditions\\
{\bf (i)} $z_{0}=0$ and $z_{\lambda}\in\mathbb{Z}$;\\
{\bf (ii)} $ m_{k'}\geq\max\{ \mathfrak{n}_{j}: j\leq k', j\in\mathbb{N}^+\}$;\\
{\bf (iii)} $\mathcal{B}_{k,k'}=\bigoplus_{j=k+1}^{k'}\left(N^{\mathbf{s}_j} \mathbf{b}_{\mathfrak{n}_{j}} c_{ j } \{0, 1, \cdots, N-1\}\right)$ with $c_j\in\mathbb{Z}\setminus N\mathbb{Z} $.
\begin{rem}{\rm Under the observation of Lemma \ref{rem(1-1)}, we can obtain that  $\mathcal{B}_{k,k'}$ is a direct sum.}
\end{rem}

\begin{prop}\label{prop(4.2.1)}
 Given a strictly  increasing sequence $\{k_n\}_{n=0}^{\infty}$ with $k_0=0$, let $\Lambda_{k_{n-1},k_{n}}$  be defined by \eqref{eq(4.3-2)}. Suppose that  $\mathbf{s}_i\neq\mathbf{s}_j$ for all $i\neq j$, then
 $$ \Lambda_{n}=\Lambda_{k_0,k_{1}}+\Lambda_{k_{1},k_{2}}+\cdots+\Lambda_{k_{n-1},k_{n}}$$
 is a spectrum of $\mu_{k_{n}}$ and $\Lambda_{n}\subset\Lambda_{n+1}$  for all $n\geq1$, where $\mu_{k_{n}}$ is defined by \eqref{eq(4.3)}.
\end{prop}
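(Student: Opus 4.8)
The plan is to establish the two assertions separately, obtaining the spectrum claim from the Jorgensen--Pedersen criterion (Proposition \ref{prop(2.3)}). Since $\mu_{k_n}$ is supported on at most $N^{k_n}$ points, the space $L^2(\mu_{k_n})$ has dimension at most $N^{k_n}$; hence it suffices to exhibit $N^{k_n}$ mutually orthogonal exponentials, i.e.\ to prove that $\Lambda_n$ is an orthogonal set of $\mu_{k_n}$ with $\#\Lambda_n=N^{k_n}$, because an orthogonal set of that cardinality is automatically an orthonormal basis. The inclusion $\Lambda_n\subset\Lambda_{n+1}$ is immediate: choosing all digits equal to $0$ in $\mathcal{B}_{k_n,k_{n+1}}$ and using $z_0=0$ shows $0\in\Lambda_{k_n,k_{n+1}}$, so $\Lambda_n=\Lambda_n+\{0\}\subset\Lambda_n+\Lambda_{k_n,k_{n+1}}=\Lambda_{n+1}$.

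Both remaining points follow from a single computation of differences. Write a generic element of $\Lambda_n$ as $\lambda=\sum_{i=1}^{n}(\beta_i+b_1\cdots b_{m_{k_i}}z_{\beta_i})$ with $\beta_i\in\mathcal{B}_{k_{i-1},k_i}$, where each block $\mathcal{B}_{k_{i-1},k_i}=\bigoplus_{j=k_{i-1}+1}^{k_i}N^{\mathbf{s}_j}\mathbf{b}_{\mathfrak{n}_j}c_j\{0,\dots,N-1\}$ is a direct sum (Lemma \ref{rem(1-1)}). Given $\lambda\neq\lambda'$ with digit data $\{d_j\}$, $\{d_j'\}$, let $j_0$ be the index with $d_{j_0}\neq d_{j_0}'$ having the smallest value $\mathbf{s}_{j_0}$; this is well defined precisely because the $\mathbf{s}_j$ are pairwise distinct. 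At level $j_0$ the contribution $N^{\mathbf{s}_{j_0}}\mathbf{b}_{\mathfrak{n}_{j_0}}c_{j_0}(d_{j_0}-d_{j_0}')$ lies in $N^{\mathbf{s}_{j_0}}\mathbf{b}_{\mathfrak{n}_{j_0}}(\mathbb{Z}\setminus N\mathbb{Z})$, since $c_{j_0}$ and $d_{j_0}-d_{j_0}'\in\pm\{1,\dots,N-1\}$ are coprime to the prime $N$. Every other differing level $j$ has $\mathbf{s}_j>\mathbf{s}_{j_0}$, so by Lemma \ref{lem(4.1)} its contribution in $N^{\mathbf{s}_j}\mathbf{b}_{\mathfrak{n}_j}(\mathbb{Z}\setminus N\mathbb{Z})$ is absorbed into $N^{\mathbf{s}_{j_0}}\mathbf{b}_{\mathfrak{n}_{j_0}}(\mathbb{Z}\setminus N\mathbb{Z})$ without affecting coprimality to $N$.

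The delicate point, which I expect to be the main obstacle, is absorbing the translation terms $b_1\cdots b_{m_{k_i}}(z_{\beta_i}-z_{\beta_i'})$; such a term is nonzero only when $\beta_i\neq\beta_i'$, in which case the smallest differing level $j_i$ of block $i$ satisfies $\mathbf{s}_{j_i}\geq\mathbf{s}_{j_0}$ (with equality only for the block containing $j_0$). By Lemma \ref{lem(4.1)}(i) and condition (ii) this forces $m_{k_i}\geq\mathfrak{n}_{j_0}\geq j_0$, so $b_1\cdots b_{j_0}\mid b_1\cdots b_{m_{k_i}}$, whence $\tau(b_1\cdots b_{m_{k_i}})\geq\tau(b_1\cdots b_{j_0})=\mathbf{s}_{j_0}+1+\tau(t_{j_0})\geq\mathbf{s}_{j_0}+1$ while $\mathbf{b}_{\mathfrak{n}_{j_0}}\mid\mathbf{b}_{m_{k_i}}$. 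As $\gcd(N,\mathbf{b}_{\mathfrak{n}_{j_0}})=1$, this gives $N^{\mathbf{s}_{j_0}+1}\mathbf{b}_{\mathfrak{n}_{j_0}}\mid b_1\cdots b_{m_{k_i}}$, so each translation term lies in $N^{\mathbf{s}_{j_0}+1}\mathbf{b}_{\mathfrak{n}_{j_0}}\mathbb{Z}$ and is absorbed exactly as the higher levels. Combining the three types of contribution yields $\lambda-\lambda'\in N^{\mathbf{s}_{j_0}}\mathbf{b}_{\mathfrak{n}_{j_0}}(\mathbb{Z}\setminus N\mathbb{Z})$.

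This one conclusion closes both gaps. Since $N^{\mathbf{s}_{j_0}}\mathbf{b}_{\mathfrak{n}_{j_0}}(\mathbb{Z}\setminus N\mathbb{Z})$ omits $0$, distinct data give distinct points, so the blocks sum directly and $\#\Lambda_n=\prod_{i=1}^{n}N^{k_i-k_{i-1}}=N^{k_n}$. Moreover, writing $\mathbf{b}_{\mathfrak{n}_{j_0}}=\mathbf{b}_{j_0}\,b_{j_0+1}'\cdots b_{\mathfrak{n}_{j_0}}'$ shows $N^{\mathbf{s}_{j_0}}\mathbf{b}_{\mathfrak{n}_{j_0}}(\mathbb{Z}\setminus N\mathbb{Z})\subset\frac{N^{\mathbf{s}_{j_0}}\mathbf{b}_{j_0}}{t'_{j_0}}(\mathbb{Z}\setminus N\mathbb{Z})=\mathcal{Z}(\hat\delta_{b_1^{-1}\cdots b_{j_0}^{-1}D_{j_0}})\subset\mathcal{Z}(\hat\mu_{k_n})$ because $j_0\leq k_n$; thus $(\Lambda_n-\Lambda_n)\setminus\{0\}\subset\mathcal{Z}(\hat\mu_{k_n})$ and $\Lambda_n$ is orthogonal. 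With the dimension count this makes $\Lambda_n$ a spectrum of $\mu_{k_n}$, completing the proof. The essential hypothesis throughout is the distinctness $\mathbf{s}_i\neq\mathbf{s}_j$, which both makes the minimal differing level $j_0$ unambiguous and strictly separates the $N$-adic scales so that Lemma \ref{lem(4.1)} applies.
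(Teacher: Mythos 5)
Your proposal is correct and follows essentially the same route as the paper's own proof: decompose each difference via the level $j_0$ minimizing $\mathbf{s}_{j}$ among differing digits, use Lemma \ref{lem(4.1)} to absorb the higher levels, use condition (ii) on $m_{k'}$ together with Lemma \ref{lem(4.1)}(i) to absorb the translation terms into $N^{\mathbf{s}_{j_0}+1}\mathbf{b}_{\mathfrak{n}_{j_0}}\mathbb{Z}$, and conclude by the cardinality count $\#\Lambda_n=N^{k_n}$. Your treatment of the translation terms is in fact slightly more explicit than the paper's, but the argument is the same.
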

\begin{proof}
Obviously, $\Lambda_{n}\subset\Lambda_{n+1}$  for all $n\geq1$.
For any $n\geq1$ and two distinct sequences $ \{ \lambda_j \}_{j=1}^{n}$ and $ \{ \tilde{\lambda}_j \}_{j=1}^{n}$ with $\lambda_j, \tilde{\lambda}_j\in\Lambda_{k_{j-1},k_{j}}$. Let $\lambda= \sum_{j=1}^{n}\lambda_j$ and $\tilde{\lambda}=\sum_{j=1}^{n}\tilde{\lambda}_j$. It follows from \eqref{eq(4.3-2)} that there exist $l_i, \tilde{l}_i\in\{ 0, 1, 2, \cdots, N-1\}$ for $1\leq i\leq k_{n}$ and  $z_{j},\tilde{z}_{j}\in\mathbb{Z}$ for  $1\leq j\leq n$  such that
$$\lambda=\sum_{i=1}^{k_{n}}N^{\mathbf{s}_i} \mathbf{b}_{\mathfrak{n}_{i}}c_i l_i+ \sum_{j=1}^{ n}b_1b_2\cdots b_{m_{k_{j}}}z_{j}\ \ \text{and}\ \ \tilde{\lambda}= \sum_{i=1}^{k_{n}}N^{\mathbf{s}_i} \mathbf{b}_{\mathfrak{n}_{i}}c_i \tilde{l}_i+ \sum_{j=1}^{ n}b_1b_2\cdots b_{m_{k_{j}}}\tilde{z}_{j},$$
 where \begin{align*}  \sum_{i=k_{j-1}+1}^{k_{j}}N^{\mathbf{s}_i} \mathbf{b}_{\mathfrak{n}_{i}}c_i l_i+b_1b_2\cdots b_{k_j}z_j= \lambda_j \ \ \text{and} \ \ \sum_{i=k_{j-1}+1}^{k_{j}}N^{\mathbf{s}_i} \mathbf{b}_{\mathfrak{n}_{i}}c_i \tilde{l}_i+b_1b_2\cdots b_{k_j}\tilde{z}_j=\tilde{\lambda}_j.\end{align*}
 Then
$$\tilde{\lambda}-\lambda= \sum_{i=1}^{k_{n}}N^{\mathbf{s}_i} \mathbf{b}_{\mathfrak{n}_{i}}c_i(\tilde{l}_i - l_i)+ \sum_{j=1}^{ n}b_1b_2\cdots b_{m_{k_{j}}}(\tilde{z}_{j}-z_j)$$
and there exists $i_{0}\in \{ 1,2,\cdots, k_n\}$  such that $$\mathbf{s}_{i_{0}}=\min\left\{ \mathbf{s}_i : N^{\mathbf{s}_i} \mathbf{b}_{\mathfrak{n}_{i}}c_i(\tilde{l}_i - l_i)\neq0, 1\leq i\leq k_{n} \right\}.$$
According to Lemma \ref{lem(4.1)} (ii), we have
$$\tilde{\lambda}-\lambda\in N^{\mathbf{s}_{i_{0}}} \mathbf{b}_{\mathfrak{n}_{i_0}}(\mathbb{Z}\setminus N\mathbb{Z}) + \sum_{j=1}^{ n}b_1b_2\cdots b_{m_{k_{j}}}(\tilde{z}_{j}-z_j).$$
Note that for any $j\in\{1, 2,\cdots, n\}$, the following two statements are easily obtained by  the definition of $\Lambda_{k_{j-1},k_{j}}$  and Lemma \ref{lem(4.1)}(i) :

$(a).$ If $ \tilde{l}_i =l_i$   for all $k_{j-1}+1\leq i\leq k_{j}$, we have  $\tilde{z}_{j}=z_j$;

$(b).$ If $ \tilde{l}_{i_0}\neq l_{i_0}$ for  some $k_{j-1}+1\leq i_0\leq k_{j}$, we have $m_{k_j}\geq\mathfrak{n}_{i_0}$.

This means that $ \tilde{\lambda}-\lambda\in N^{\mathbf{s}_{i_{0}}} \mathbf{b}_{\mathfrak{n}_{i_0}}(\mathbb{Z}\setminus N\mathbb{Z}) + b_1b_2\cdots b_{\mathfrak{n}_{i_0}}\mathbb{Z}$. Based on the definition of $\mathbf{s}_{i_{0}}$ and $\mathfrak{n}_{ i_0}$, it's easy to show that
$$\tilde{\lambda}-\lambda\in N^{\mathbf{s}_{i_{0}}} \mathbf{b}_{\mathfrak{n}_{i_0}}(\mathbb{Z}\setminus N\mathbb{Z})\subset\mathcal{Z}(\hat{\delta}_{b_{1}^{-1}b_{2}^{-1}\cdots b_{i_{0}}^{-1}D_{i_0}}).$$

Therefore, $\tilde{\lambda}-\lambda= \sum_{j=1}^{n}(\tilde{\lambda}_j- \lambda_j)\neq0$ and  $\{ \tilde{\lambda}, \lambda\}$ is an orthogonal set of  $\mu_{k_{n}}$.
According to the arbitrariness of two distinct sequences $ \{ \lambda_j \}_{j=1}^{n}$ and $ \{ \tilde{\lambda}_j \}_{j=1}^{n}$ with $\lambda_j, \tilde{\lambda}_j\in\Lambda_{k_{j-1},k_{j}}$, we have $\Lambda_{n}=\Lambda_{k_0,k_{1}}\oplus\Lambda_{k_{1},k_{2}}
\oplus\cdots\oplus\Lambda_{k_{n-1},k_{n}}$, i.e., $\#\Lambda_{n}=N^{k_{n}}$ is equivalent to the dimension of
$L^2( \mu_{k_{n}})$, and $\Lambda_{n}$ is an orthogonal set of  $\mu_{k_{n}}$. Then $\Lambda_{n}$ is a spectrum of $\mu_{k_{n}}$
 and the proof is complete.
\end{proof}

The following well-known result will be useful in this section.
\begin{prop}\cite[Lemma 2.2]{AFL19}\label{prop(4.1)}
Let $\{\nu_k\}_{k=1}^{\infty} $ be a sequence
of probability measures with compact support set. Then $\{ \hat{\nu}_{k} \}_{k=1}^{\infty} $ is equicontinuous.
\end{prop}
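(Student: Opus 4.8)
The plan is to derive equicontinuity directly from the integral formula for the Fourier transform, using in an essential way that all the measures are supported in one fixed compact set. First I would record the reading of the hypothesis that actually makes the conclusion true: there is a single radius $R>0$ with $\operatorname{supp}\nu_k\subseteq[-R,R]$ for every $k$. This uniform boundedness is what upgrades the (automatic) individual uniform continuity of each $\hat\nu_k$ to equicontinuity of the whole family; without it the statement would fail, as $\nu_k=\tfrac12(\delta_0+\delta_k)$ already illustrates, since there $\hat\nu_k(x)=\tfrac12\bigl(1+e^{2\pi i kx}\bigr)$ oscillates with frequency $k\to\infty$. In the present applications the relevant measures are the tails $\nu_{>k}$ of a convergent infinite convolution with bounded digit sets, so their supports do lie in a common compact set and this reading is the intended one.

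With $R$ fixed, for arbitrary $x,y\in\mathbb{R}$ I would write
\[
\hat\nu_k(x)-\hat\nu_k(y)=\int_{-R}^{R}\bigl(e^{2\pi i x\xi}-e^{2\pi i y\xi}\bigr)\,d\nu_k(\xi)
\]
and estimate the integrand pointwise. Factoring out $e^{2\pi i y\xi}$ gives $\bigl|e^{2\pi i x\xi}-e^{2\pi i y\xi}\bigr|=\bigl|e^{2\pi i(x-y)\xi}-1\bigr|$, and the elementary inequality $|e^{i\theta}-1|\le|\theta|$ bounds this by $2\pi|x-y|\,|\xi|\le 2\pi R\,|x-y|$ on $[-R,R]$. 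Integrating against the probability measure $\nu_k$ (whose total mass equals $1$) then yields the uniform estimate
\[
\bigl|\hat\nu_k(x)-\hat\nu_k(y)\bigr|\le 2\pi R\,|x-y|\qquad\text{for all }k.
\]

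Since the Lipschitz constant $2\pi R$ does not depend on $k$, the family $\{\hat\nu_k\}$ is equi-Lipschitz; given $\varepsilon>0$ the single choice $\delta=\varepsilon/(2\pi R)$ then witnesses equicontinuity simultaneously for every $k$, which is the claim. I do not expect any genuine obstacle here: the whole argument is the one-line modulus-of-continuity estimate above, and the only point demanding attention is the conceptual one of isolating the correct hypothesis, namely a common compact support, which is exactly what powers the uniformity.
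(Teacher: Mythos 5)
Your proof is correct. Note that the paper does not prove this proposition at all --- it is quoted verbatim from \cite[Lemma 2.2]{AFL19} --- so there is no in-paper argument to compare against; your uniform Lipschitz estimate $|\hat\nu_k(x)-\hat\nu_k(y)|\le 2\pi R|x-y|$ via $|e^{i\theta}-1|\le|\theta|$ is the standard (and essentially the only) way to prove it. You are also right to flag that the hypothesis must be read as a \emph{common} compact support: the statement as printed is ambiguous on this point, your counterexample $\nu_k=\tfrac12(\delta_0+\delta_k)$ correctly shows the result fails otherwise, and the only place the proposition is used in the paper is for the tail measures $\nu_{>k}$, which (as shown in Proposition \ref{prop(4.2)}) are all supported in $[0,1]$, so the intended reading is indeed the one you adopt.
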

We will give the definition of equi-positive family, which helps us to understand the proof of Proposition \ref{prop(4.2)}. And the following  Proposition \ref{prop(4.2)} plays an important role in studying the sufficiency of Theorem \ref{th(1.4)}.
\begin{defi}\label{de(4.3)}
Let $\Xi$ be a collection of probability measures on compact set $[0, 1]$. We
say that $\Xi$ is an equi-positive family if there exists $ \varepsilon_0> 0$ such that for all $\nu \in\Xi$ and $x \in[0, 1] $, there exists an integer $\mathbf{k}_{ \nu, x}$ such that
$|\hat{\nu}(x+\mathbf{k}_{\nu, x })|\geq  \varepsilon_0$.
\end{defi}
\begin{prop}\label{prop(4.2)}
Let $\mu_{\{b_k\},\{D_k\}}$ and $\nu_{>k}$  be defined by \eqref{eq(2.08)} and \eqref{eq(4.2)} respectively, and let integer $m_0\geq1$. Suppose $b_k>(N-1)t_k$ for all $k\geq m_{0}$, then there exist $C>0$ and $\theta_0>0$ such that for any $x\in[0,1]$ and  $k\geq m_{0}$, there exists an integer $\mathbf{k}_{k, x}$ such that
$$
|\hat{\nu}_{>k}(x+y+\mathbf{k}_{k, x})|>C
$$
for any $y\in[-\theta_0, \theta_0]$, where $\mathbf{k}_{k, 0}=0$  for all $k\geq m_{0}$.
\end{prop}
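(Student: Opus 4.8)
The plan is to run everything through the explicit product formula for $\hat\nu_{>k}$. Writing $R_{k,j}=b_{k+1}b_{k+2}\cdots b_{j}$ (with $R_{k,k}=1$) and $m_N(u)=\frac1N\sum_{d=0}^{N-1}e^{2\pi i d u}$, the convolution structure of \eqref{eq(4.2)} gives
$$\hat\nu_{>k}(\xi)=\prod_{j=k+1}^{\infty}m_N\!\left(\frac{t_j\,\xi}{R_{k,j}}\right).$$
I would first record the two features of $m_N$ that drive the argument: $m_N(0)=1$ with $|m_N(u)|\ge 1-Cu^{2}$ for small $|u|$ (because $\mathrm{Re}\,m_N'(0)=0$), and, for any fixed $\rho'<1/N$, a lower bound $c_0:=\min_{|u|\le\rho'}|m_N(u)|>0$. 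This is exactly where the hypotheses enter. Since $\{t_k\}$ is bounded, say $1\le t_k\le M$, and $b_j\ge (N-1)t_j+1$ for $j>k\ge m_0$ (the integrality is essential), the ratios obey $s^{*}:=\sup_{j>m_0}\frac{t_j}{b_j}\le \frac{M}{(N-1)M+1}<1$, and a direct check shows $\rho:=s^{*}/2<1/N$ (equivalently $M(2-N)<2$, valid for every $N\ge2$). I then fix $\rho'\in(\rho,1/N)$. The point is that $s^{*}<1$, $\rho<1/N$, $\rho'$ and $c_0$ are all \emph{uniform in $k$}, precisely because they come from the $k$-free inequalities $1\le t_j\le M$ and $b_j\ge(N-1)t_j+1$.

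The key idea for the translate is that a single near-integer alignment of the first factor, combined with a geometric contraction, forces every later argument to be small automatically. For a candidate integer $n$ set $U_j:=t_j(x+n)/R_{k,j}$. From $U_{j+1}=\frac{t_{j+1}}{t_j b_{j+1}}U_j$, together with $\frac{t_{j+1}}{b_{j+1}}\le s^{*}$ and $t_j\ge1$, I get the contraction $|U_{j+1}|\le s^{*}|U_j|$, hence $|U_j|\le (s^{*})^{\,j-k-1}|U_{k+1}|$. So it suffices to make $|U_{k+1}|\le\rho$. Since $U_{k+1}=\frac{t_{k+1}}{b_{k+1}}(x+n)$, as $x$ ranges over $[0,1]$ the choices $n=0$ and $n=-1$ give $\frac{t_{k+1}}{b_{k+1}}x$ and $\frac{t_{k+1}}{b_{k+1}}(x-1)$, whose minimum modulus is $\frac{t_{k+1}}{b_{k+1}}\min\{x,1-x\}\le \frac{t_{k+1}}{2b_{k+1}}\le \frac{s^{*}}{2}=\rho$. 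Thus I would define $\mathbf{k}_{k,x}=0$ for $x\le\frac12$ and $\mathbf{k}_{k,x}=-1$ for $x>\frac12$; this is independent of $k$ and satisfies $\mathbf{k}_{k,0}=0$, and with it $|U_j|\le (s^{*})^{\,j-k-1}\rho$ for all $j>k$.

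Finally I would absorb the perturbation $y$. Writing $U_j(y)=U_j+t_j y/R_{k,j}$ and using $R_{k,j}\ge 2^{\,j-k}$, the perturbation is at most $M\theta_0\,2^{-(j-k)}$, so taking $\theta_0$ small (depending only on $M,N,\rho,\rho'$) with $\rho+M\theta_0\le\rho'$ keeps $|U_j(y)|\le\rho'<\tfrac12$, hence $\|U_j(y)\|=|U_j(y)|$, for every $j>k$ and every $y\in[-\theta_0,\theta_0]$; in particular $|U_j(y)|\le (s^{*})^{\,j-k-1}\rho+M\theta_0\,2^{-(j-k)}$ decays geometrically at a $k$-free rate. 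To bound $\prod_{j>k}|m_N(U_j(y))|$ below I would split the factors: fix $\delta>0$ with $|m_N(u)|\ge\frac12$ and $C\delta^{2}\le\frac12$ for $|u|\le\delta$. By the geometric decay the number of $j$ with $|U_j(y)|>\delta$ is at most a constant $K$ independent of $k,x,y$, and each such factor is $\ge c_0$; for the rest, $\prod_{|U_j(y)|\le\delta}|m_N(U_j(y))|\ge \prod(1-C|U_j(y)|^{2})\ge \exp\!\big(-2C\sum_{j>k}|U_j(y)|^{2}\big)$ via $1-\varepsilon\ge e^{-2\varepsilon}$, and $\sum_{j>k}|U_j(y)|^{2}$ is bounded by a constant independent of $k,x,y$. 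Hence $|\hat\nu_{>k}(x+y+\mathbf{k}_{k,x})|\ge c_0^{\,K}\exp(-2C\cdot\mathrm{const})=:C>0$ uniformly, proving the proposition.

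The heart of the matter—and the only place real care is needed—is uniformity in $k$: the constants $C$ and $\theta_0$ must not degrade as $k\to\infty$. Everything above is arranged so that $s^{*}<1$, $\rho<1/N$, $c_0>0$, the contraction rate $s^{*}$, and the decay rate $\tfrac12$ (from $b_j\ge2$) are all $k$-free, which is what makes the tail sum, and therefore the product bound, uniform. I expect the main obstacle to be bookkeeping these uniform constants cleanly—especially the inequality $\rho<1/N$ (where boundedness of $\{t_k\}$ is indispensable) and the split of the product into its uniformly finitely many ``large'' factors and its geometrically summable tail—rather than any conceptual difficulty, once the ``align the first factor, then contract'' mechanism is in place.
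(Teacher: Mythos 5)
Your proof is correct, but it takes a genuinely different route from the paper's. The paper's proof is essentially a reduction to external machinery: it verifies by a telescoping estimate (using $b_k>(N-1)t_k$) that $\mathrm{spt}(\nu_{>k})\subset[0,1]$ for all $k\geq m_0$, then invokes Theorem 5.4 of \cite{AFL19} to conclude that $\{\nu_{>k}\}_{k\geq m_0}$ is an equi-positive family (which directly supplies $C$ and the integers $\mathbf{k}_{k,x}$ with $\mathbf{k}_{k,0}=0$), and finally uses the equicontinuity of $\{\hat{\nu}_{>k}\}$ (Proposition \ref{prop(4.1)}) to widen the pointwise bound to the window $y\in[-\theta_0,\theta_0]$. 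You instead prove everything from scratch through the product formula $\hat{\nu}_{>k}(\xi)=\prod_{j>k}m_N(t_j\xi/R_{k,j})$: the explicit translates $\mathbf{k}_{k,x}\in\{0,-1\}$, the uniform contraction $|U_{j+1}|\leq s^{*}|U_j|$ with $s^{*}\leq M/((N-1)M+1)<1$, the verification $\rho=s^{*}/2<1/N$ keeping every argument away from the zeros of $m_N$, and the split of the product into boundedly many ``large'' factors plus a tail controlled by $\sum_j|U_j(y)|^2$. All of these steps check out, and the uniformity in $k$ is handled correctly. What the paper's route buys is brevity (the only computation is the support estimate); what your route buys is a self-contained argument with explicit, $k$-independent translates and constants, a single estimate that absorbs the $y$-perturbation without a separate equicontinuity appeal, and a transparent accounting of exactly where $b_k>(N-1)t_k$ and the boundedness of $\{t_k\}$ enter (namely, to force $s^{*}<1$ and $\rho<1/N$ uniformly --- as you note, for $N=2$ the boundedness of $\{t_k\}$ is genuinely needed for the uniform contraction rate).
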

\begin{proof}
Since the Cantor-Moran measure $\nu_{>k}$  is supported on a compact set
\begin{equation}\label{eq(0.0)}
\left\{\sum_{n=1}^{\infty} \frac{ d_{k+n} }{b_{k+1} b_{k+2} \cdots b_{k+n}} : d_{k+n}\in D_{k+n}\ \ \text{for all}\ \ n\geq1\right\},
 \end{equation}
we have  the support  $\text{spt}(\nu_{>k})\subset\left[0, \sum_{n=1}^{\infty} \frac{ (N-1)t_{k+n} }{b_{k+1} b_{k+2} \cdots b_{k+n}}\right].$
As $b_k>(N-1)t_k$ for all $k\geq m_0$, one has
$$\begin{aligned}
\sum_{n=1}^{\infty} \frac{ (N-1)t_{k+n} }{b_{k+1} b_{k+2} \cdots b_{k+n}}&\leq
\sum_{n=1}^{\infty} \frac{ [\left(N-1\right)t_{k+n}+1]-1 }{[\left(N-1\right)t_{k+1} +1][\left(N-1\right)t_{k+2} +1]\cdots [\left(N-1\right)t_{k+n}+1]}
\\&\leq1
\end{aligned}$$
for all $k\geq m_{0}$. Hence, $\text{spt}(\nu_{>k})\subset[0, 1]$ for all $k\geq m_{0}$. Since $\#D_k=N $ for all $k\geq1$, it follows from \cite[Theorem 5.4]{AFL19} that $\{ \nu_{>k}\}_{k=m_0}^{\infty}$ is an equi-positive family. Hence there exists $C>0$ such that for any $x\in[0,1]$ and  $k\geq m_{0}$, there exists an integer $\mathbf{k}_{k, x}$ with $\mathbf{k}_{k, 0}=0$ such that
$
|\hat{\nu}_{>k}(x+\mathbf{k}_{k, x})|>2C$.
 In view of Proposition $\ref{prop(4.1)}$, we have  $\{ \hat{\nu}_{>k} \}_{k=m_0}^{\infty} $ is equicontinuous. Thus there exists  $\theta_0>0$ such that
$
|\hat{\nu}_{>k}(x+y+\mathbf{k}_{k, x})|>C
$
for any  $y\in[-\theta_0, \theta_0]$, and the proposition follows.
\end{proof}
  Theorem 2.3 in \cite {AHH19} give a discriminating method that $\Lambda$ become a spectrum of the Cantor-Moran measure $\mu_{\{b_k\},\{D_k\}}$ . For the convenience of the readers, we provide its proof process and improve it to make it simpler to use.

\begin{thm}\label{prop(4.2.2)}
 With the above notations, let $\{k_n\}_{n=1}^{\infty}$ be a strictly increasing sequence, and let $\Lambda_{n}$ be a spectrum of $\mu_{k_{n}}$ for all $n\geq1$. If $\Lambda_{n}\subset\Lambda_{n+1}$  for all $n\geq1$ and  there exists $\varepsilon_0>0$   such that for any $n\geq1$
$$ \left|\hat{\nu}_{>k_{n}}\left( \frac{ \lambda}{b_1b_2 \cdots b_{k_{n}}}\right)\right|\geq\varepsilon_0$$
for all  $\lambda\in\Lambda_{n}$, then $\Lambda=\bigcup_{n=1}^{\infty}\Lambda_{n}$ is a spectrum of $\mu_{\{b_k\},\{D_k\}}$.
\end{thm}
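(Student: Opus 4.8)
The plan is to verify the spectrality criterion of Proposition \ref{prop(2.3)}: it suffices to prove that the entire function $Q_{\Lambda}(\xi)=\sum_{\lambda\in\Lambda}|\hat{\mu}_{\{b_k\},\{D_k\}}(\xi+\lambda)|^{2}$ equals $1$ identically. First I would settle the inequality $Q_\Lambda\le1$. The factorization $\hat{\mu}_{\{b_k\},\{D_k\}}(\xi)=\hat{\mu}_{k_n}(\xi)\,\hat{\nu}_{>k_n}\!\left(\xi/(b_1b_2\cdots b_{k_n})\right)$ shows $\mathcal{Z}(\hat{\mu}_{k_n})\subset\mathcal{Z}(\hat{\mu}_{\{b_k\},\{D_k\}})$; since the $\Lambda_n$ are nested and each is an orthogonal set of $\mu_{k_n}$, any two distinct points of $\Lambda=\bigcup_n\Lambda_n$ lie in a common $\Lambda_N$, so their difference is a zero of $\hat{\mu}_{\{b_k\},\{D_k\}}$. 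Hence $\Lambda$ is an orthogonal set of $\mu_{\{b_k\},\{D_k\}}$ and, by Proposition \ref{prop(2.3)}, $Q_\Lambda\le1$ and $Q_\Lambda$ is entire.

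For the reverse inequality I would use that $\Lambda_n\subset\Lambda$ and that $\Lambda_n$ is a spectrum of $\mu_{k_n}$, which gives the exact identity $\sum_{\lambda\in\Lambda_n}|\hat{\mu}_{k_n}(\xi+\lambda)|^{2}\equiv1$. Combining this with the factorization yields
\[
Q_{\Lambda}(\xi)\ \ge\ \sum_{\lambda\in\Lambda_n}|\hat{\mu}_{k_n}(\xi+\lambda)|^{2}\Big|\hat{\nu}_{>k_n}\!\Big(\tfrac{\xi+\lambda}{b_1b_2\cdots b_{k_n}}\Big)\Big|^{2},
\]
so that $1$ minus the right-hand side equals $\sum_{\lambda\in\Lambda_n}|\hat{\mu}_{k_n}(\xi+\lambda)|^{2}\big(1-|\hat{\nu}_{>k_n}((\xi+\lambda)/(b_1b_2\cdots b_{k_n}))|^{2}\big)$. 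I would estimate this by splitting $\Lambda_n$ according to the size of $(\xi+\lambda)/(b_1b_2\cdots b_{k_n})$. When this quantity is small, $\hat{\nu}_{>k_n}((\xi+\lambda)/(b_1b_2\cdots b_{k_n}))$ is close to $\hat{\nu}_{>k_n}(0)=1$ by the equicontinuity of $\{\hat{\nu}_{>k}\}$ (Proposition \ref{prop(4.1)}), so that block contributes at most $O(\eta)$. For the remaining points the standing bound $|\hat{\nu}_{>k_n}(\lambda/(b_1b_2\cdots b_{k_n}))|\ge\varepsilon_0$, transported to $(\xi+\lambda)/(b_1b_2\cdots b_{k_n})$ by the same equicontinuity (the shift $\xi/(b_1b_2\cdots b_{k_n})\to0$), keeps the factor bounded away from $0$.

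The hard part will be this second block: I must show that the spectral mass $\sum_{\lambda}|\hat{\mu}_{k_n}(\xi+\lambda)|^{2}$ carried by those $\lambda$ for which $(\xi+\lambda)/(b_1b_2\cdots b_{k_n})$ stays away from $0$ becomes negligible as $n\to\infty$; without this, the pointwise information only delivers $Q_\Lambda\ge\varepsilon_0^{2}$, not $Q_\Lambda\equiv1$. It is exactly here that the uniform lower bound $\varepsilon_0$ (which in the applications of the theorem is supplied by the equi-positivity of Proposition \ref{prop(4.2)}) must be played off against the dimension bookkeeping $\#\Lambda_n=N^{k_n}=\dim L^2(\mu_{k_n})$, i.e.\ against the fact that the nested spectra exhaust $L^{2}(\mu_{\{b_k\},\{D_k\}})$ in the limit. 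Once this contribution is shown to vanish, the displayed lower bound gives $Q_{\Lambda}(\xi)\ge1-O(\eta)$ for every $\eta>0$, hence $Q_\Lambda(\xi)=1$. Carrying out the near/far estimate uniformly on each compact $\xi$-interval yields $Q_{\Lambda}\equiv1$ on $\mathbb{R}$, and Proposition \ref{prop(2.3)} then identifies $\Lambda$ as a spectrum of $\mu_{\{b_k\},\{D_k\}}$, completing the proof.
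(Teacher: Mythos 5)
Your setup is sound: orthogonality of the nested $\Lambda_n$ gives $Q_\Lambda\le 1$, and the factorization $\hat{\mu}=\hat{\mu}_{k_n}\cdot\hat{\nu}_{>k_n}(\,\cdot\,/(b_1\cdots b_{k_n}))$ together with the Parseval identity $\sum_{\lambda\in\Lambda_n}|\hat{\mu}_{k_n}(\xi+\lambda)|^2\equiv1$ is exactly the right starting point. But the proof is not complete, and you have in fact flagged the gap yourself. The quantity you need to show is negligible, namely
$$\sum_{\lambda\in\Lambda_n}|\hat{\mu}_{k_n}(\xi+\lambda)|^{2}\Bigl(1-\bigl|\hat{\nu}_{>k_n}\bigl(\tfrac{\xi+\lambda}{b_1\cdots b_{k_n}}\bigr)\bigr|^{2}\Bigr),$$
is precisely $1-Q_{k_n}(\xi)$, i.e.\ $1$ minus the $n$-th partial sum of $Q_\Lambda$. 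Showing that the ``far mass'' vanishes is therefore not an auxiliary estimate: it is equivalent to the statement $Q_{k_n}(\xi)\to1$, which is the theorem's conclusion. The near/far splitting and the appeal to $\#\Lambda_n=N^{k_n}=\dim L^2(\mu_{k_n})$ do not supply an independent handle on this quantity, so as written the argument only delivers $Q_\Lambda\ge\varepsilon_0^2$ (as you note) and then stalls.

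The paper closes this loop with a bootstrap that you are missing. Fix $\xi$ in a small interval $[-\rho_0,\rho_0]$ on which, by equicontinuity of $\{\hat{\nu}_{>k}\}$ (Proposition \ref{prop(4.1)}), the hypothesis $|\hat{\nu}_{>k_n}(\lambda/(b_1\cdots b_{k_n}))|\ge\varepsilon_0$ upgrades to $|\hat{\nu}_{>k_n}((\xi+\lambda)/(b_1\cdots b_{k_n}))|\ge\varepsilon_0/2$ uniformly in $n$ and $\lambda\in\Lambda_n$. Then compare two partial sums: for $p\ge1$, summing over $\Lambda_{n+p}\setminus\Lambda_n$ and using that $\Lambda_{n+p}$ is a \emph{full} spectrum of $\mu_{k_{n+p}}$ gives
$$Q_{k_{n+p}}(\xi)\ \ge\ Q_{k_n}(\xi)+\tfrac{\varepsilon_0^2}{4}\Bigl(1-\sum_{\lambda\in\Lambda_n}|\hat{\mu}_{k_{n+p}}(\xi+\lambda)|^{2}\Bigr).$$
Letting $p\to\infty$ yields $Q_\Lambda(\xi)-Q_{k_n}(\xi)\ge\tfrac{\varepsilon_0^2}{4}(1-Q_{k_n}(\xi))$, and then letting $n\to\infty$ forces $1-Q_\Lambda(\xi)\le0$, hence $Q_\Lambda\equiv1$ on $[-\rho_0,\rho_0]$; analyticity of $Q_\Lambda$ (the ``moreover'' clause of Proposition \ref{prop(2.3)}) extends this to all of $\mathbb{R}$. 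The key idea is that the deficit $1-Q_{k_n}$ is fed back into the inequality rather than estimated directly; without this (or some substitute), your proposal does not prove the theorem.
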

\begin{proof}
According to Proposition \ref{prop(4.1)}, there exists $\rho_{0}>0$ such that for any $n\geq1$,
$$ \left|\hat{\nu}_{>k_{n}}\left( \frac{\xi+\lambda}{b_1b_2 \cdots b_{k_{n}}}\right)\right|\geq\frac{\varepsilon_0}{2}$$
for all  $\lambda\in\Lambda_{n}$ and $\xi\in[-\rho_0, \rho_0]$.
Let $Q_{k_n}(\xi)= \sum_{\lambda \in \Lambda_{n}}|\hat{\mu}(\xi+\lambda)|^{2}$ for $\xi\in[-\rho_0, \rho_0]$. Then
\begin{equation}\label{eq(0.001)}
Q_{\Lambda}(\xi)= \sum_{\lambda \in \Lambda }|\hat{\mu}(\xi+\lambda)|^{2}=  \lim_{n\rightarrow\infty}Q_{k_n}(\xi).
\end{equation}
For any $p\geq1$, we have
\begin{align}\label{eq(0.002)}
Q_{k_{n+p}}(\xi)\nonumber&=Q_{k_n}(\xi)+ \sum_{\lambda \in \Lambda_{n+p}\setminus \Lambda_{n} }\left|\hat{\mu}(\xi+\lambda)\right|^{2}
\nonumber\\&=Q_{k_n}(\xi)+ \sum_{\lambda \in \Lambda_{n+p}\setminus \Lambda_{n} }\left|\hat{\mu}_{k_{n+p}}\left(\xi+\lambda\right)\right|^{2}\left|\hat{\nu}_{>k_{n+p}}
\left(  b_{1}^{-1}b_{2}^{-1} \cdots b_{k_{n+p}}^{-1} (\xi+\lambda)\right)\right|^{2}
\nonumber\\&\geq Q_{k_n}(\xi)+\frac{\varepsilon_{0}^{2}}{4}\sum_{\lambda \in \Lambda_{n+p}\setminus \Lambda_{n} }\left|\hat{\mu}_{k_{n+p}}\left(\xi+\lambda\right)\right|^{2}
\nonumber\\&= Q_{k_n}(\xi)+\frac{\varepsilon_{0}^{2}}{4} \left(1-\sum_{\lambda \in \Lambda_{n} }\left|\hat{\mu}_{k_{n+p}}\left(\xi+\lambda\right)\right|^{2}\right).
\end{align}
 Letting $p\rightarrow\infty$,  it follows from \eqref{eq(0.001)} and  \eqref{eq(0.002)} that
$Q_{\Lambda}(\xi)- Q_{k_n}(\xi)\geq\frac{\varepsilon_{0}^{2}}{4}\left(1-Q_{k_n}(\xi)\right).$
Taking $n\rightarrow\infty$, we have $Q_{\Lambda}(\xi)=\sum_{\lambda \in \Lambda }|\hat{\mu}(\xi+\lambda)|^{2}=1$ for $\xi\in[-\rho_0, \rho_0]$.
In view of Proposition \ref{prop(2.3)}, we have $\Lambda=\bigcup_{n=1}^{\infty}\Lambda_{n}$ is a spectrum of $\mu_{\{b_k\},\{D_k\}}.$
\end{proof}

Next, we will decompose the proof of  $``(ii)\Longrightarrow (i)"$ of Theorem \ref{th(1.4)} into  the following two cases.

 \textbf{Case I:}
 There exists an infinite subsequence $\{k_n\}_{n=1}^{\infty}$ of $\mathbb{N}^+$
 such that $\min\{\mathbf{s}_j : j> k_n\}> \max\{\mathbf{s}_j : j\leq k_n\}$ for all $n\geq1$.

 \textbf{Case II:} There exists $k_0\in\mathbb{N}^+$ such that $\min\{\mathbf{s}_j : j> k\}< \max\{\mathbf{s}_j : j\leq k\}$ for all $k\geq k_0$.

\subsection{\bf Case  I }
In Case I, the proof of Theorem \ref{th(1.4)}  is relatively simple, and we can use the above preparation to prove it directly.
\begin{thm}\label{th(1.4_1)}
 Under the assumption of Theorem \ref{th(1.4)}, suppose that  $\mathbf{s}_i\neq\mathbf{s}_j$ for all $i\neq j$ and
there exists a subsequence $\{k_{n}\}_{n=1}^{\infty}$ of $\{k\}_{k=1}^{\infty}$ such that $\min\{\mathbf{s}_j: j> k_n\}> \max\{\mathbf{s}_j: j\leq k_n\}$ for all $n\geq1$,
then $\mu_{\{b_k\},\{D_k\}}$ is a spectral measure.
\end{thm}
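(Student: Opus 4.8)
The plan is to realize $\mu_{\{b_k\},\{D_k\}}$ as a weak limit of the partial measures $\mu_{k_n}$ along the Case~I subsequence and to glue spectra of the $\mu_{k_n}$ together via Theorem~\ref{prop(4.2.2)}. First I would record the structural consequence of the Case~I hypothesis: if $\min\{\mathbf{s}_j:j>k_n\}>\max\{\mathbf{s}_j:j\le k_n\}$, then for every $j\le k_n$ no index $i>k_n$ satisfies $\mathbf{s}_j\ge\mathbf{s}_i$, so $\mathfrak{n}_j\le k_n$ and in particular $\mathfrak{n}_{k_n}=k_n$. Hence $\max\{\mathfrak{n}_j:j\le k_n\}=k_n$, which legitimizes the choice $m_{k_n}=k_n$ in the definition \eqref{eq(4.3-2)} of $\Lambda_{k_{n-1},k_n}$; with this choice the level-$n$ integer shifts contribute \emph{exact} integers to $\frac{\lambda}{b_1\cdots b_{k_n}}$, which is what makes the bookkeeping below work.

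Next I would build the sets $\Lambda_n=\Lambda_{k_0,k_1}+\cdots+\Lambda_{k_{n-1},k_n}$ inductively, using the freedom in the integers $z_\lambda$ (and any admissible $c_j\in\mathbb{Z}\setminus N\mathbb{Z}$). Choosing the subsequence so that $k_1\ge m_0$, Proposition~\ref{prop(4.2.1)} already shows that each $\Lambda_n$ is a spectrum of $\mu_{k_n}$ and that $\Lambda_n\subset\Lambda_{n+1}$. The essential remaining task is the uniform non-vanishing $\bigl|\hat{\nu}_{>k_n}\bigl(\lambda/(b_1\cdots b_{k_n})\bigr)\bigr|\ge\varepsilon_0$ demanded by Theorem~\ref{prop(4.2.2)}. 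Here I would invoke Proposition~\ref{prop(4.2)} to obtain constants $C,\theta_0>0$ and, for each $x\in[0,1]$, an integer $\mathbf{k}_{k_n,x}$ (with $\mathbf{k}_{k_n,0}=0$) on whose translate $\hat{\nu}_{>k_n}$ exceeds $C$ throughout the window $[-\theta_0,\theta_0]$.

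The delicate point is that the level-$n$ shift $z_{\beta_n}$ is shared by all $\lambda=\lambda'+\beta_n+b_1\cdots b_{k_n}z_{\beta_n}$ with $\lambda'\in\Lambda_{n-1}$, so it cannot be tuned to each $\lambda$ individually. I would resolve this by sparsifying the Case~I subsequence inductively: having fixed the finite set $\Lambda_{n-1}$, choose $k_n$ so large that $b_1\cdots b_{k_n}>\theta_0^{-1}\max_{\lambda'\in\Lambda_{n-1}}|\lambda'|$, so that $\frac{\lambda'}{b_1\cdots b_{k_n}}\in[-\theta_0,\theta_0]$ for every $\lambda'\in\Lambda_{n-1}$; this is possible because $b_1\cdots b_{k_n}\to\infty$ and the Case~I property is inherited by subsequences. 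Letting $x_{\beta_n}\in[0,1)$ denote the fractional part of $\beta_n/(b_1\cdots b_{k_n})$ and setting $z_{\beta_n}=\mathbf{k}_{k_n,x_{\beta_n}}-\lfloor \beta_n/(b_1\cdots b_{k_n})\rfloor$ (which respects $z_0=0$ since $x_0=0$ and $\mathbf{k}_{k_n,0}=0$), I get
\[
\frac{\lambda}{b_1\cdots b_{k_n}}=x_{\beta_n}+\mathbf{k}_{k_n,x_{\beta_n}}+\frac{\lambda'}{b_1\cdots b_{k_n}},
\]
with the last summand lying in $[-\theta_0,\theta_0]$. Proposition~\ref{prop(4.2)} then forces $\bigl|\hat{\nu}_{>k_n}\bigl(\lambda/(b_1\cdots b_{k_n})\bigr)\bigr|>C$ for all $\lambda\in\Lambda_n$ and all $n$, so $\varepsilon_0=C$ is uniform, and Theorem~\ref{prop(4.2.2)} yields that $\Lambda=\bigcup_{n}\Lambda_n$ is a spectrum of $\mu_{\{b_k\},\{D_k\}}$.

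I expect the main obstacle to be precisely this inter-level coupling: securing one $\varepsilon_0$ valid for every $\lambda$ and every $n$ despite the shared shifts $z_{\beta_n}$. The sparsification step is what renders the ``old'' points $\lambda'\in\Lambda_{n-1}$ harmless by pushing them into the equi-positive window, while the identity $\mathfrak{n}_{k_n}=k_n$ (together with $m_{k_n}=k_n$) is what makes the ``new'' points land at integer translates where Proposition~\ref{prop(4.2)} applies. Verifying these two size estimates is the only genuine work; everything else is assembly on top of Proposition~\ref{prop(4.2.1)}, Proposition~\ref{prop(4.2)} and Theorem~\ref{prop(4.2.2)}.
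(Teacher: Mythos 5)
Your proposal is correct and follows essentially the same route as the paper: the Case~I condition is used to verify $\max\{\mathfrak{n}_j: j\le k_n\}\le k_n$ so that Proposition~\ref{prop(4.2.1)} applies with $m_{k_n}=k_n$, the subsequence is thinned so that $(b_1\cdots b_{k_n})^{-1}\Lambda_{n-1}\subset[-\theta_0,\theta_0]$, the shifts from Proposition~\ref{prop(4.2)} are attached only to the new block $\mathcal{B}_{k_{n-1},k_n}$, and Theorem~\ref{prop(4.2.2)} is invoked with $\varepsilon_0=C$. Your explicit treatment of the fractional part of $\beta_n/(b_1\cdots b_{k_n})$ is a minor tidying of a step the paper leaves implicit, not a different argument.
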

\begin{proof}Recall that $m_0$ is defined in Theorem \ref{th(1.4)}.
Let $k_{n_1}\geq m_0$ and $k_{n_1}\in\{k_{n}\}_{n=1}^{\infty}$, and let $$\mathcal{B}_{0,k_{n_1}}=\bigoplus_{j=1}^{k_{n_1}}\left(N^{\mathbf{s}_j} \mathbf{b}_{\mathfrak{n}_{j}}\{0, 1, \cdots, N-1\}\right).$$
 According to  Proposition \ref{prop(4.2)},  for any $\lambda_1\in\mathcal{B}_{0,k_{n_1}}$ there exists  an integer $\mathbf{k}_{1, \lambda_1}$ such that
$$
\left|\hat{\nu}_{>k_{n_1}}( \frac{\lambda_1}{b_1b_2\cdots b_{k_{n_1}}}+\mathbf{k}_{1, \lambda_1})\right|>C
$$
for some $C>0$, where $\mathbf{k}_{1,0}=0$.  Let
$$\Lambda_{n_1}:=\Lambda_{0,k_{n_1}}=\bigcup_{\lambda_1\in \mathcal{B}_{0,k_{n_1}}}(\lambda_1+b_1b_2\cdots b_{k_{n_1}} \mathbf{k}_{1, \lambda_1} ).$$ Then
 $|\hat{\nu}_{>k_{n_1}}( \frac{\lambda}{b_1b_2\cdots b_{k_{n_1}}})|>C$
for any $ \lambda\in\Lambda_{n_1}$. Since $\min\{\mathbf{s}_j : j> k_{n_1}\}> \max\{\mathbf{s}_j : j\leq k_{n_1}\}$, we have $ {k_{n_1}}\geq\max\{ \mathfrak{n}_{j}: j\leq k_{n_1}\}$. This means that $\Lambda_{0,k_{n_1}}$ satisfies  $(i)-(iii)$ of \eqref{eq(4.3-2)}. It follows from  Proposition \ref{prop(4.2.1)} that $\Lambda_{n_1}$ is a spectrum of $\mu_{k_{n_1}}$.

Let $k_{n_2}\in\{k_{n}\}_{n=1}^{\infty}$ satisfy $k_{n_2}>k_{n_1}$ and
$( b_1b_2\cdots b_{k_{n_2}})^{-1}\Lambda_{n_1}\subset[ -\theta_0, \theta_0]$, where $\theta_0$ is given in Proposition \ref{prop(4.2)}. Define  $$\mathcal{B}_{k_{n_1},k_{n_2}}=\bigoplus_{j=k_{n_1}+1 }^{k_{n_2}}\left(N^{\mathbf{s}_j} \mathbf{b}_{\mathfrak{n}_{j}}\{0, 1, 2, \cdots, N-1\}\right).$$
According to  Proposition \ref{prop(4.2)},  for any $\lambda_1\in\Lambda_{n_1}$ and $\lambda_2\in\mathcal{B}_{k_{n_1},k_{n_2}}$ there exists  an integer $\mathbf{k}_{2,\lambda_2}$ such that
$$
|\hat{\nu}_{>k_{n_2}}( \frac{\lambda_1+\lambda_2}{b_1b_2\cdots b_{k_{n_2}}}+\mathbf{k}_{2, \lambda_2})|>C
$$
and $\mathbf{k}_{2, 0}=0$. Let $ \Lambda_{k_{n_1}, k_{n_2}}=\cup_{\lambda_2\in \mathcal{B}_{k_{n_1}, k_{n_2}}} (\lambda_2+b_1b_2\cdots b_{k_{n_2}} \mathbf{k}_{2, \lambda_2})$ and  $ \Lambda_{n_2}= \Lambda_{0,k_{n_1}}+ \Lambda_{k_{n_1}, k_{n_2}}$.
Then
$|\hat{\nu}_{>k_{n_2}}( \frac{\lambda}{b_1b_2\cdots b_{k_{n_2}}})|>C$
for any $ \lambda\in\Lambda_{n_2}$.
Since $\min\{\mathbf{s}_j : j> k_{n_2}\}> \max\{\mathbf{s}_j : j\leq k_{n_2}\}$, we have   $ {k_{n_2}}\geq\max\{ \mathfrak{n}_{j}: j\leq k_{n_2}\}$ and   $\Lambda_{k_{n_1}, k_{n_2}}$ satisfies  $(i)-(iii)$ of \eqref{eq(4.3-2)}. It follows from Proposition \ref{prop(4.2.1)} that $\Lambda_{n_2}$ is a spectrum of $\mu_{k_{n_2}}$ and $ \Lambda_{n_1}\subset\Lambda_{n_2}$.

Repeat this operation,  we can find a strictly  increasing sequence $\{k_{n_i}\}_{i=1}^{\infty}$ such that for any $i\geq1$, the following three statements hold: (i) $\Lambda_{n_i}\subset \Lambda_{n_i+1}$; (ii) $\Lambda_{n_i}$ is a spectrum of $\mu_{k_{n_i}}$; (iii)
$|\hat{\nu}_{>k_{n_i}}( \frac{\lambda}{b_1b_2\cdots b_{k_{n_i}}})|>C$ for any $ \lambda\in\Lambda_{n_i}$.
Combining  these with Theorem \ref{prop(4.2.2)}, we have
$\Lambda=\bigcup_{i=1}^{\infty}\Lambda_{n_i}$ is a spectrum of $\mu_{\{b_k\},\{D_k\}}.$ Thus the proof follows.
\end{proof}
\subsection{\bf Case II }
%
To prove  Case II, we need to make some technical preparations, that is, construct the appropriate $\Lambda=\bigcup_{n=1}^{\infty}\Lambda_{n}$  to satisfy the conditions of Theorem \ref{prop(4.2.2)}, but the construction method is different from Case I. The $m_0$ mentioned in this section comes from Theorem \ref{th(1.4)} and will not be hinted at later for simplicity of writing. We begin with some propositions.
\begin{prop}\label{lem(4.4-1)}
  Under the assumption of Theorem \ref{th(1.4)}, suppose that $\mathbf{s}_i\neq\mathbf{s}_j$ for all $i\neq j$ and there exists  $k_0 \geq m_0$ such that $\min\{\mathbf{s}_j : j> k\}< \max\{\mathbf{s}_j : j\leq k\}$ for any $k\geq k_0$, then the following two statements hold.
\begin{enumerate}[\rm(i).]
\item There exists a positive integer $\beta$ such that for any $k\geq k_0$,
$$
\max\{t_n': n\geq1\}\mathbf{b}_{\mathfrak{n}_{k}}< \mathbf{b}_{i}
$$ if $ i \geq k+\beta $.
\item $\tau(b_k)\leq \max\{\tau(t_n): n\geq1\}$ for any $k\geq k_0 +1$.
 \end{enumerate}
\end{prop}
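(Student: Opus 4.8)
The plan is to treat the two parts separately, both through the additive valuation $f(k):=\tau(b_1b_2\cdots b_k)=\sum_{i=1}^{k}\tau(b_i)$ together with the identity $\mathbf{s}_k=f(k)-1-\tau(t_k)$, which holds because $N$ is prime and hence $\tau(Nt_k)=1+\tau(t_k)$. Write $L:=\max_{n}\tau(t_n)<\infty$, finite since $\{t_k\}$ is bounded. Statement (ii) then follows from a one-line use of Case II: if $\tau(b_{k+1})\ge L+1$ for some $k\ge k_0$, then for every $j>k$ we have $f(j)\ge f(k+1)\ge f(k)+L+1$, hence $\mathbf{s}_j\ge f(k)$, while for every $j\le k$ we have $\mathbf{s}_j\le f(j)-1\le f(k)-1$; thus $\min\{\mathbf{s}_j:j>k\}\ge f(k)>\max\{\mathbf{s}_j:j\le k\}$, contradicting the defining inequality of Case II. Therefore $\tau(b_{k+1})\le L$ for all $k\ge k_0$, which is exactly (ii).

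For (i) I would first reduce the claim to a bound on the \emph{pure-power indices}, i.e.\ the indices $j$ with $b'_j=1$ (equivalently $b_j=N^{\tau(b_j)}$). Since $\mathbf{b}_i=\mathbf{b}_{\mathfrak{n}_k}\prod_{\mathfrak{n}_k<j\le i}b'_j$ and, by Lemma \ref{lem(4.1-1)}, $\mathfrak{n}_k\le k+\alpha$, for $i\ge k+\beta$ the window $(\mathfrak{n}_k,i]$ contains $(k+\alpha,i]$, whose length is at least $\beta-\alpha$. As every $b'_j\ge 1$ and $b'_j\ge 2$ whenever $j$ is not a pure-power index, it suffices to know that pure-power runs are short: if no $2L$ consecutive indices in $[k_0,\infty)$ are all pure-power, then any window of length $2L(\lceil\log_2 T\rceil+1)$ with entries $\ge k_0$, partitioned into $\lceil\log_2 T\rceil+1$ blocks of length $2L$, meets at least $\lceil\log_2 T\rceil+1$ non-pure-power indices, so $\prod_{\mathfrak{n}_k<j\le i}b'_j\ge 2^{\lceil\log_2 T\rceil+1}>T$, giving $\mathbf{b}_i>T\,\mathbf{b}_{\mathfrak{n}_k}$. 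Taking $\beta:=\alpha+2L(\lceil\log_2 T\rceil+1)$ then finishes (i).

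The heart of the argument — and the step I expect to be the main obstacle — is proving this run bound, and here the hypothesis $|b_k|>(N-1)|t_k|$ is essential. On a pure-power index $j\ge m_0$ one has $N^{\tau(b_j)}=b_j>(N-1)t_j\ge (N-1)N^{\tau(t_j)}$, whence $\tau(b_j)\ge \tau(t_j)+1$; consequently, along any run of consecutive pure-power indices, $\mathbf{s}_j-\mathbf{s}_{j-1}=\tau(b_j)-\tau(t_j)+\tau(t_{j-1})\ge 1$, so $\mathbf{s}$ is strictly increasing on the run. Now suppose, for contradiction, that $b'_j=1$ for all $j\in[a,a+2L-1]$ with $a\ge k_0$, and let this block sit inside a maximal pure-power run $[p+1,q]$; put $k=a+L-1\ (\ge k_0)$. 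Because $f(k)\ge f(p)+(k-p)\ge f(p)+L$, the increasing value $\mathbf{s}_k$ dominates all earlier ones (both inside the run and, via $\mathbf{s}_k\ge f(p)-1$, the pre-run ones), so $\max_{j\le k}\mathbf{s}_j=\mathbf{s}_k$. On the other side, the in-run future values exceed $\mathbf{s}_k$ by monotonicity, while for $j>q$ one has $f(q)\ge f(k)+L$, hence $\mathbf{s}_j\ge f(q)-1-\tau(t_j)\ge f(k)-1\ge \mathbf{s}_k$. Thus $\min_{j>k}\mathbf{s}_j\ge \mathbf{s}_k=\max_{j\le k}\mathbf{s}_j$, which is incompatible with the strict inequality required by Case II at $k$. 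Hence no such block exists, establishing the run bound.

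I expect the only delicate points to be bookkeeping: checking that $\mathbf{s}_k$ really is the running maximum at $k=a+L-1$ (which rests on $f(k)\ge f(p)+L$ and $\tau(t)\le L$) and that the post-run estimate survives the worst case $\tau(t_j)=L$. The conceptual obstacle is recognizing that the hypothesis forces strict monotonicity of $\mathbf{s}$ along pure-power runs; without it the $\tau(t_j)$-term can cancel the drift, $\mathbf{s}$ may oscillate, and one can then build an infinite pure-power run compatible with Case II, so that (i) genuinely fails — this is precisely why the condition $|b_k|>(N-1)|t_k|$ cannot be dropped here.
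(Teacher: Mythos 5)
Your proof is correct in substance, and the two parts fare differently against the paper. Part (ii) is essentially the paper's own argument: a single index $\tilde k\ge k_0+1$ with $\tau(b_{\tilde k})>L:=\max_n\tau(t_n)$ forces $\min\{\mathbf{s}_j:j>\tilde k-1\}>\max\{\mathbf{s}_j:j\le \tilde k-1\}$, contradicting the Case~II inequality at $k=\tilde k-1$. For part (i) you reach the same quantitative fact --- a positive density of indices with $b'_j\ge2$ --- by a genuinely different route. The paper's Claim~2 shows that every window of $\alpha+1$ consecutive indices starting at $\bar k\ge k_0$ contains some $j$ with $\tau(b_j)\le\tau(t_j)$ (hence $b'_j\ge 2$ via $b_j>(N-1)t_j$): otherwise $\mathbf{s}$ increases strictly across the window, so $\mathfrak{n}_{\bar k}=\bar k$ by Lemma \ref{lem(4.1-1)}, and Case~II then yields an earlier $i_{\bar k}$ with $\mathbf{s}_{i_{\bar k}}>\mathbf{s}_{\bar k}$, contradicting $\mathbf{s}_{\bar k}>\tau(b_1\cdots b_{\bar k-1})-1\ge\mathbf{s}_{i_{\bar k}}$. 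You instead bound the length of maximal runs of indices with $b'_j=1$ by $2L$, using the same monotonicity mechanism ($\tau(b_j)\ge\tau(t_j)+1$ on such indices, so $\mathbf{s}$ strictly increases along the run) together with an $L$-step buffer on each side of the midpoint $k=a+L-1$ to get $\min_{j>k}\mathbf{s}_j\ge\mathbf{s}_k=\max_{j\le k}\mathbf{s}_j$ directly. Your version avoids routing the contradiction through the identity $\mathfrak{n}_{\bar k}=\bar k$ and produces the explicit window length $2L$ instead of $\alpha+1$; the paper's version is shorter because Lemma \ref{lem(4.1-1)} does that bookkeeping for it. Both end with the same $\log_2$-counting of factors $\ge2$ against $T=\max_n t'_n$.

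Two small repairs are needed, neither a conceptual gap. First, your choice $\beta=\alpha+2L(\lceil\log_2T\rceil+1)$ degenerates when $L=0$ (all $t_k$ coprime to $N$): the blocks are empty and the run-length claim is vacuous, so the counting gives nothing. In that case part (ii) already yields $\tau(b_j)=0$, hence $b'_j=b_j\ge2$, for every $j\ge k_0+1$, and $\beta=\alpha+\lceil\log_2T\rceil+1$ works; so either treat $L=0$ separately or replace $2L$ by $2L+1$ throughout. Second, when you write $f(k)\ge f(p)+(k-p)$ (with your $f(k)=\tau(b_1\cdots b_k)$) you implicitly use $\tau(b_j)\ge1$ on the whole maximal run $(p,q]$, which is only guaranteed for indices $\ge m_0$; if the run begins below $k_0$ this step is not justified as stated. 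It is harmless, since all you actually need is $f(k)\ge f(a-1)+L$ and $\mathbf{s}_j\le f(a-1)-1$ for $j<a$, and these use only the block $[a,a+2L-1]\subset[k_0,\infty)$, but the write-up should say so.
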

\begin{proof}

(i).  We first prove the following claim.

{\rm {\bf Claim 2.} For any $k\geq k_0$, we have
$\#\left\{i: \frac{\tau(b_{k+i})}{\tau(t_{k+i})}\leq 1, 0\leq i\leq \alpha\right\}\geq1$, where $\alpha$ is defined in Lemma \ref{lem(4.1-1)}.}
\begin{proof}[\bf Proof of  Claim 2]
Suppose that
$\#\left\{i: \frac{\tau(b_{ \bar{k} +i})}{\tau(t_{ \bar{k} +i})}\leq 1, 0\leq i\leq\alpha\right\}=0$ for some $\bar{k}\geq k_0$, then $ \tau(b_{\bar{k}+i})- \tau(t_{\bar{k}+i})>0$ for all $0\leq i\leq\alpha$. This means $\mathbf{s}_{\bar{k} }<\mathbf{s}_{\bar{k}+1}<\cdots<\mathbf{s}_{\bar{k}+\alpha}.$ According to Lemma \ref{lem(4.1-1)}, we have $ \bar{k}=\mathfrak{n}_{\bar{k}}$. Combining these with $\min\{\mathbf{s}_j : j> \bar{k}\}< \max\{\mathbf{s}_j : j\leq \bar{k}\}$, there exists $i_{\bar{k}}<\bar{k}$ such that 
$\mathbf{s}_{\bar{k}}<\mathbf{s}_{i_{\bar{k}}}$. Note that $ \mathbf{s}_{\bar{k}}=\tau(b_{1}b_{2}\cdots b_{\bar{k}})-\tau(t_{\bar{k}})-1>\tau(b_{1}b_{2}\cdots b_{\bar{k}-1})-1\geq\mathbf{s}_{i_{\bar{k}}},$ which contradicts $ \mathbf{s}_{\bar{k}}<\mathbf{s}_{i_{\bar{k}}}$.  The claim follows.
\end{proof}
According to Claim 2, for any  $k\geq  k_0 $ there exists $0\leq i_0 \leq\alpha$ such that $ \tau(b_{k+i_0}) \leq \tau(t_{k+i_0})$. We have $b_{k+i_0}'>1$ since $b_{k+i_0}>(N-1)t_{k+i_0} $. This imply that at least one of  $b_{k}', b_{k+1}',\cdots, b_{k+\alpha}'$ is greater than or equal to two. Since  $\{t_{k}\}_{k=1}^{\infty}$ is bounded, there exists a positive integer $\gamma$ such that $\max\{t_n': n\geq1\}<2^{\gamma}.$ Hence,
 $ \max\{t_n': n\geq1\}\mathbf{b}_{\mathfrak{n}_{k}}< \mathbf{b}_{\mathfrak{n}_{k}+\gamma(\alpha+1)}
$ for all $k\geq k_0 $. Making $\beta=\mathfrak{n}_{k}+\gamma(\alpha+1)-k$, we have  $\max\{t_n': n\geq1\}\mathbf{b}_{\mathfrak{n}_{k}}<\mathbf{b}_{i}$
if $i\geq k+ \beta $.

(ii). Suppose that there exists $\tilde{k} \geq k_0+1$ such that $\tau(b_{\tilde{k} })> \max\{\tau(t_n): n\geq1\}$. For any $k_1$ and  $k_2$ satisfying $k_1<\tilde{k}\leq k_2$, we have
$ \mathbf{s}_{k_2} =\tau(b_{1}b_{2}\cdots b_{k_2})-\tau(t_{k_2})-1\geq \tau(b_{1}b_{2}\cdots b_{\tilde{k}})-\tau(t_{k_2})-1>\tau(b_{1}b_{2}\cdots b_{k_{1}})-1 \geq \mathbf{s}_{k_1}$. From the arbitrariness of $k_1$  and $k_2$, we get that  $\min\{\mathbf{s}_j : j> \tilde{k}-1\}>\max\{\mathbf{s}_j : j\leq \tilde{k}-1\}$, which contradicts  our assumption.
\end{proof}
\begin{prop}\label{prop(4.4.2)}
 Under the assumption of Theorem \ref{th(1.4)}, suppose that  $\mathbf{s}_i\neq\mathbf{s}_j$ for all $i\neq j$ and there exists $k_0\geq m_0$ such that $\min\{\mathbf{s}_j : j> k\}< \max\{\mathbf{s}_j : j\leq k\}$ for any $k\geq k_0 $. Then there exist  $\epsilon_0,\vartheta_0>0$ such that for any $ k_2>k_1\geq k_0$, we can choose a appropriate $\mathcal{B}_{k_1,k_2}:=\bigoplus_{j=k_1+1}^{k_{2}}\left(N^{\mathbf{s}_j} \mathbf{b}_{\mathfrak{n}_{j}}c_{ j } \{0, 1, 2, \cdots, N-1\}\right)$ with $c_j\in\mathbb{Z}\setminus N\mathbb{Z}$ to make
 \begin{equation} \label{eq(4.3-3)}\prod_{i=1}^{\alpha} \left| \hat{\delta}_{b_{k_2 +1}^{-1}b_{k_2 +2}^{-1} \cdots b_{k_2 +i}^{-1}D_{k_{2}+i}}\left(\xi+\frac{\lambda}{b_{1}b_{2 } \cdots b_{k_2} }\right) \right|>\epsilon_0
\end{equation}
for any $ \xi\in[-\vartheta_0, \vartheta_{0}]$ and $\lambda\in\mathcal{B}_{k_1,k_2}$, where $\alpha$ is defined in Lemma \ref{lem(4.1-1)}.
\end{prop}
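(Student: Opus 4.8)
The plan is to bound each of the $\alpha$ factors in the product separately, since a uniform lower bound $\epsilon$ on each factor yields the bound $\epsilon^{\alpha}=:\epsilon_0$ on the product. Fix $i\in\{1,\dots,\alpha\}$ and write the $i$-th factor as a normalized geometric sum: because $D_{k_2+i}=\{0,1,\dots,N-1\}t_{k_2+i}$, its modulus equals $\tfrac1N\bigl|\sin(\pi V_i)/\sin(\pi V_i/N)\bigr|$, where $V_i=\tfrac{N t_{k_2+i}}{b_{k_2+1}\cdots b_{k_2+i}}\bigl(\xi+\tfrac{\lambda}{b_1\cdots b_{k_2}}\bigr)$. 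This factor vanishes exactly when $V_i\in\mathbb{Z}\setminus N\mathbb{Z}$ and is bounded below once $V_i \bmod N$ is held inside a fixed closed set avoiding the residues $1,\dots,N-1$. So the whole problem reduces to choosing the multipliers $c_j$ so that, uniformly over all digit strings defining $\lambda\in\mathcal{B}_{k_1,k_2}$ and all $\xi\in[-\vartheta_0,\vartheta_0]$, every $V_i$ stays near $0\bmod N$.

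Next I would expand $\lambda=\sum_{j=k_1+1}^{k_2} N^{\mathbf{s}_j}\mathbf{b}_{\mathfrak{n}_j}c_j l_j$ with $l_j\in\{0,\dots,N-1\}$ and read off the $N$-adic valuation of each index's contribution to $V_i$. A short computation using $\mathbf{s}_k=\tau(b_1\cdots b_k)-\tau(t_k)-1$ gives $v_N=\mathbf{s}_j-\mathbf{s}_{k_2+i}$ for a nonzero digit. The hypothesis $\mathbf{s}_i\neq\mathbf{s}_j$ enters precisely here: since $j\neq k_2+i$, no contribution has valuation $0$, so no single term can contribute an $N$-unit integer summand. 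The decisive structural point is that if $\mathbf{s}_j>\mathbf{s}_{k_2+i}$ then $k_2+i$ is itself admissible in the maximum defining $\mathfrak{n}_j$, whence $\mathfrak{n}_j\geq k_2+i$, the ratio $\mathbf{b}_{\mathfrak{n}_j}/\mathbf{b}_{k_2+i}$ is an integer, and the whole contribution lies in $N\mathbb{Z}$. Consequently only the indices with $\mathbf{s}_j<\mathbf{s}_{k_2+i}$, all of valuation $\le-1$, can perturb $V_i \bmod N$.

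I would then split these perturbing indices using Proposition \ref{lem(4.4-1)}. Part (i) shows that once $\mathbf{b}_{\mathfrak{n}_j}$ lags $\mathbf{b}_{k_2+i}$ by the gap $\beta$, the factor $\mathbf{b}_{\mathfrak{n}_j}/\mathbf{b}_{k_2+i}$ is smaller than $1/\max_n t'_n$, so with the $c_j$ drawn from a fixed bounded set these far contributions form an archimedean-small, geometrically summable tail; part (ii), bounding $\tau(b_k)$, keeps the valuation gaps and $N$-powers over the remaining near window uniformly bounded. This confines the genuinely disturbing indices to a window of length $O(\alpha+\beta)$, a number controlled independently of $k_1,k_2$. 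Finally I would choose the finitely many $c_j\in\mathbb{Z}\setminus N\mathbb{Z}$ attached to this near window so that the near part of each $V_i$, $i=1,\dots,\alpha$, remains near $0\bmod N$ for every admissible digit string; since the count of near indices, the valuation pattern, the digit range and the ambient $\mathbf{b}$-ratios are all bounded, this is a finite selection producing a uniform $\epsilon_0$, after which $\vartheta_0$ is shrunk to absorb the $\xi$-perturbation and $\epsilon_0$ is replaced by its $\alpha$-th power.

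The hard part will be this last selection of the $c_j$: one bounded choice must keep all $\alpha$ quantities $V_i\bmod N$ safe at once, for every digit combination. The sharpest case is a near index with valuation exactly $-1$ and integral $\mathbf{b}$-ratio, that is $\mathbf{s}_j=\mathbf{s}_{k_2+i}-1$ with $\mathfrak{n}_j\ge k_2+i$, because as its digit $l_j$ runs through $\{0,\dots,N-1\}$ its single contribution already sweeps every residue mod $N$. The escape is that the remaining near contributions have strictly smaller valuation, so that $V_i$ stays $N$-adically non-integral and hence archimedean-bounded away from $\mathbb{Z}\setminus N\mathbb{Z}$, the separation being uniform precisely because $\mathbf{s}_i\ne\mathbf{s}_j$ forces a definite valuation gap.
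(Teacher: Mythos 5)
Your architecture is essentially the paper's: bound each of the $\alpha$ factors separately, read off the $N$-adic valuation $\mathbf{s}_j-\mathbf{s}_{k_2+i}$ of each digit's contribution, discard the indices with $\mathbf{s}_j>\mathbf{s}_{k_2+i}$ as integer shifts (your observation that $\mathfrak{n}_j\ge k_2+i$ forces $\mathbf{b}_{\mathfrak{n}_j}/\mathbf{b}_{k_2+i}\in\mathbb{Z}$ is exactly how the paper handles this), split the remaining indices into a far set and a near window via Proposition \ref{lem(4.4-1)}, clear denominators on the near window so that its contribution is an exact rational with denominator $N^{\omega}$, $\omega\le\kappa_0$, and finish by a valuation-gap separation. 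This is the paper's $\Omega_1/\Omega_2$ decomposition with $c_j=b'_{\mathfrak{n}_j+1}\cdots b'_{k_2+\alpha}$ on $\Omega_2$.

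There is, however, a genuine gap in your treatment of the far contributions. You claim that for \emph{any} $c_j$ from a bounded set these form an ``archimedean-small, geometrically summable tail.'' Quantitatively, each far term contributes $N^{\mathbf{s}_j-\mathbf{s}_{k_2+i}-1}c_jl_j\,t'_{k_2+i}\mathbf{b}_{\mathfrak{n}_j}/\mathbf{b}_{k_2+i}$ to the argument of $\hat{\delta}_{\{0,1,\cdots,N-1\}}$, with exponents $\le -2$ that are merely \emph{distinct} and a $\mathbf{b}$-ratio only known to be $<1$; the triangle inequality then bounds the tail by $\sum_{m\ge 2}(N-1)N^{-m}=1/N$, which is exactly the first zero of $\hat{\delta}_{\{0,1,\cdots,N-1\}}$, so no uniform $\epsilon_0$ follows. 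The same defect recurs when the near window is nonempty: the far part must be uniformly smaller than the $N$-adic gap $N^{-\omega}$ of the near part, and the naive bound gives exactly $N^{-\omega}$, so the total can drift onto the zero $(a_{\omega}+1)/N$. The paper closes this hole precisely by choosing $c_j=(-1)^{\mathbf{s}_j}$ on the far set $\Omega_1$, splitting the tail into sub-sums over even and odd valuation gaps and improving the bound by the factor $N/(N+1)$, which yields the uniform margin $1/(N^{\kappa_0}(N+1))$; your plan reserves the careful choice of $c_j$ for the near window only, so as written it cannot deliver the proposition. (A secondary issue: your intermediate goal of keeping every $V_i$ ``near $0\bmod N$'' is unattainable --- the near-window contribution genuinely lands at points like $1+N^{1-\omega}$, close to the forbidden residue $1$; what actually saves the argument, as your final paragraph begins to recognize, is the bounded-denominator $N$-adic separation, not proximity to $0$.)
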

\begin{proof}
Let $$\Omega_1=\left\{j: \max\{t_{k_2+i}': 1\leq i\leq\alpha\}\mathbf{b}_{\mathfrak{n}_{j}}< \mathbf{b}_{k_2+1},  k_1+1\leq j\leq k_2 \right\}$$
and
$$\Omega_2=\left\{j: \max\{t_{k_2+i}': 1\leq i\leq\alpha\}\mathbf{b}_{\mathfrak{n}_{j}}\geq \mathbf{b}_{k_2+1},  k_1+1\leq j\leq k_2 \right\}.$$
This imply that $\Omega_1\cap\Omega_2=\emptyset$ and $\mathfrak{n}_{j}<k_2+1$ for all $j\in\Omega_1$. We choose
\begin{equation*} \label{eq(4.3-3-1-1)}
\mathcal{B}_{k_1,k_2}=\bigoplus_{j=k_1+1}^{k_{2}}\left(N^{\mathbf{s}_j} \mathbf{b}_{\mathfrak{n}_{j}}c_{ j } \{0, 1, 2, \cdots, N-1\}\right)
\end{equation*}
satisfying
$$
c_j =
 \left\{
\begin{array}{ll}
 (-1)^{\mathbf{s}_j} , &\text{if}\ \ j\in\Omega_1; \\
 b'_{\mathfrak{n}_{j}+1} b'_{\mathfrak{n}_{j}+2}\cdots b'_{k_{2}+\alpha}, &\text{if}\ \ j\in\Omega_2.
\end{array}
\right.
$$
Next, we prove the following claim.

{\rm {\bf Claim 3.} There exists  $\tilde{\epsilon }>0$  such that
$$\left| \hat{\delta}_{\{0,1, \cdots, N-1\}}\left(\frac{\lambda t_{k_{2+i}}}{b_{1}b_{2 } \cdots b_{k_{2+i}} }\right) \right| >  \tilde{\epsilon} $$
for any $i\in\{1,2, \cdots, \alpha\}$ and  $\lambda\in\mathcal{B}_{k_1,k_2}$.}
\begin{proof}[\bf Proof of  Claim 3]
For any $i\in\{1,2, \cdots, \alpha\}$ and $\lambda\in\mathcal{B}_{k_1,k_2}$, there exist $l_j\in\{0, 1, 2, \cdots, N-1\}$ for $k_1+1\leq j\leq k_2$ such that $$\lambda =\sum_{j\in\Omega_1\cup\Omega_2} N^{\mathbf{s}_j} \mathbf{b}_{\mathfrak{n}_{j}} c_{ j } l_j=\sum_{j\in\Omega_1 } N^{\mathbf{s}_j} \mathbf{b}_{\mathfrak{n}_{j}} c_{ j } l_j+\sum_{j\in \Omega_2} N^{\mathbf{s}_j} \mathbf{b}_{\mathfrak{n}_{j}} c_{ j } l_j.$$
If $\{j: l_j\neq0,  j\in \Omega_1\}\neq\emptyset$, take $j_1\in\Omega_1$ such that
$$\mathbf{s}_{j_1}=\max\{\mathbf{s}_{j}: l_j\neq0,  j\in \Omega_1\}.$$
Similarly, if $\{j: l_j\neq0, j\in \Omega_2\}\neq\emptyset$, take $j_2\in\Omega_2$ such that $$\mathbf{s}_{j_2}=\min\{\mathbf{s}_{j}: l_j\neq0,  j\in \Omega_2\}.$$
In fact, $\mathbf{s}_{j_1}<\mathbf{s}_{j_2}$. Otherwise, it follows from Lemma \ref{lem(4.1)} (i) that $\mathbf{b}_{\mathfrak{n}_{j_2}} \mid \mathbf{b}_{\mathfrak{n}_{j_1}}$, which
contradicts the definition of $\Omega_1$ and $\Omega_2$.
Hence, we have $\mathbf{s}_{j_1}<\mathbf{s}_{j_2}$. In the following, we will make a classified discussion according to the situation of $\Omega_1, \Omega_2$ and $ l_j $.

(I). For any $m\in\{1,2\}$, if $\Omega_m=\emptyset$ or $l_j=0$ for all $j\in\Omega_m$, we have $\sum_{j\in\Omega_m}\frac{N^{\mathbf{s}_j} \mathbf{b}_{\mathfrak{n}_{j}}c_j l_j t_{k_2+i}}{b_{1}b_{2 } \cdots b_{k_2+i}}=0$.

(II). If $\Omega_1\neq\emptyset$ and  $l_j\neq0$ for some $j\in \Omega_1 $. For any $j\in\Omega_1$, according to the definition of $\Omega_1$, we have $t_{k_2+i}'<b_{\mathfrak{n}_{j+1}}'b_{\mathfrak{n}_{j+2}}'\cdots b_{ k_2+1 }'$, and we claim $ \mathbf{s}_{j_1}<\mathbf{s}_{k_2+i}$ for $1\leq i\leq\alpha$. Otherwise, by Lemma \ref{lem(4.1)} (i) and the definition of $\Omega_1$, we get $\mathbf{b}_{\mathfrak{n}_{k_2}+i}|\mathbf{b}_{\mathfrak{n}_{j_1}}$ and
$\mathbf{b}_{\mathfrak{n}_{j_1}}
\geq\mathbf{b}_{\mathfrak{n}_{k_2}+i}
\geq\mathbf{b}_{\mathfrak{n}_{k_2}+1}
>\mathbf{b}_{\mathfrak{n}_{j_1}},$
which is a contradiction and the claim follows.
Since $\mathbf{s}_{j_1}=\max\{\mathbf{s}_{j}: l_j\neq0, j\in \Omega_1\}$ and $\mathbf{s}_{j}\neq\mathbf{s}_{i}$ for $i\neq j$, we have
\begin{align}
\left|\sum_{j\in\Omega_1 }\frac{N^{\mathbf{s}_j} \mathbf{b}_{\mathfrak{n}_{j}}c_j l_j t_{k_2+i}}{b_{1}b_{2 } \cdots b_{k_2+i}}\right|\nonumber&=\left|\sum_{j\in\Omega_1 }\frac{ (-1)^{\mathbf{s}_j} N^{\mathbf{s}_j-\mathbf{s}_{k_2+i}-1} l_j t_{k_2+i}'}{b_{\mathfrak{n}_{j+1}}'b_{\mathfrak{n}_{j+2}}'\cdots b_{ k_2+i }'}\right|=\left|\sum_{j\in\Omega_1 }\frac{(-1)^{\mathbf{s}_{j_1}-s_j} N^{\mathbf{s}_{j_1}-\mathbf{s}_{k_2+i}-1} l_j }{N^{s_{j_1}-s_j}}\frac{t_{k_2+i}'}{b_{\mathfrak{n}_{j}+1}'b_{\mathfrak{n}_{j}+2}'\cdots b_{k_2+i }'}\right|\end{align}
Since $j\in\Omega_1$, the definition of $\Omega_1$ shows $\frac{t_{k_2+i}'}{b_{\mathfrak{n}_{j}+1}'b_{\mathfrak{n}_{j}+2}'\cdots b_{k_2+i }'}<1$. Hence,
\begin{align}\label{eq(0.002000)}
\left|\sum_{j\in\Omega_1 }\frac{N^{\mathbf{s}_j} \mathbf{b}_{\mathfrak{n}_{j}}c_j l_j t_{k_2+i}}{b_{1}b_{2 } \cdots b_{k_2+i}}\right|\nonumber &<N^{\mathbf{s}_{{j_1}}-\mathbf{s}_{k_2+i}-1}\left| \sum_{j\in\Omega_1,\mathbf{s}_{j_1}-\mathbf{s}_j\in 2 \mathbb{Z} }\frac{  l_j}{N^{\mathbf{s}_{j_1}-\mathbf{s}_j}}-\sum_{j\in\Omega_1,\mathbf{s}_{j_1}-\mathbf{s}_j\in 2 \mathbb{Z}+1}\frac{  l_j}{N^{\mathbf{s}_{j_1}-\mathbf{s}_j}} \right|
\nonumber\\&\leq N^{\mathbf{s}_{{j_1}}-\mathbf{s}_{k_2+i}-1}(N-1)
\max\left\{\sum_{j\in\Omega_1,\mathbf{s}_{j_1}-\mathbf{s}_j\in 2 \mathbb{Z} }\frac{1}{N^{\mathbf{s}_{j_1}-\mathbf{s}_j}},
\sum_{j\in\Omega_1,\mathbf{s}_{j_1}-\mathbf{s}_j\in 2 \mathbb{Z}+1}\frac{  1}{N^{\mathbf{s}_{j_1}-\mathbf{s}_j}}\right\}
\nonumber \\&=\frac{N^{\mathbf{s}_{{j_1}}+1-\mathbf{s}_{k_2+i}}}{N+1}.
\end{align}
We have
$\left|\sum_{j\in\Omega_1}\frac{ N^{\mathbf{s}_j} \mathbf{b}_{\mathfrak{n}_{j}}c_j l_j t_{k_2+i}}{b_{1}b_{2 } \cdots b_{k_2+i}}\right|<  \frac{1}{N+1} $,
which means $\min\left\{\left|\sum_{j\in\Omega_1}\frac{ N^{\mathbf{s}_j} \mathbf{b}_{\mathfrak{n}_{j}}c_j l_j t_{k_2+i}}{b_{1}b_{2 } \cdots b_{k_2+i}}-l\right|:l\in
\mathcal{Z}(\hat{\delta}_{\{0,1,\cdots,N-1\}})\right\}\geq\frac{1}{N(N+1)}$. Therefore, there exists $\epsilon_{i_0}>0$ such that
\begin{align}\label{eq(4.3-3-1-1--4)}\left| \hat{\delta}_{\{0,1, \cdots, N-1\}}\left(\frac{\sum_{j\in\Omega_1} N^{\mathbf{s}_j} \mathbf{b}_{\mathfrak{n}_{j}}c_j l_j t_{k_2+i}}{b_{1}b_{2 } \cdots b_{k_2+i}}+z\right) \right| >\epsilon_{i_0}
\end{align}
for any $z\in\mathbb{Z}$.

(III). If $\Omega_2\neq\emptyset$ and $l_j\neq0$ for some $j\in \Omega_2$. For any $j\in\Omega_2$, we have
\begin{align}\label{eq(4.3-3-1-1--3)}
 \sum_{j\in\Omega_2}\frac{ N^{\mathbf{s}_j} \mathbf{b}_{\mathfrak{n}_{j}}c_j l_j t_{k_2+i}}{b_{1}b_{2 } \cdots b_{k_2+i}} = \sum_{j\in\Omega_2}\frac{  N^{\mathbf{s}_j-\mathbf{s}_{k_2+i}-1}  \mathbf{b}_{k_{2}+\alpha}l_j t_{k_2+i}'}{\mathbf{b}_{k_2+i}}\in \frac{\mathbb{Z}\setminus N\mathbb{Z} }{ N^{\mathbf{s}_{k_2+i}+1-\mathbf{s}_{j_2}}}.
\end{align}

(i). If $\mathbf{s}_{j_2}>\mathbf{s}_{k_2+i}$, then $\sum_{j\in\Omega_2}\frac{ N^{\mathbf{s}_j} \mathbf{b}_{\mathfrak{n}_{j}}c_j l_j t_{k_2+i}}{b_{1}b_{2 } \cdots b_{k_2+i}}  \in \mathbb{Z}.$

(ii). If $\mathbf{s}_{j_2}<\mathbf{s}_{k_2+i}$. Writing $\omega=\mathbf{s}_{k_2+i}+1-\mathbf{s}_{j_2}$, then $\omega\geq2$. According to (I) and \eqref{eq(0.002000)}, we have
$$\left|\sum_{j\in\Omega_1}\frac{ N^{\mathbf{s}_j} \mathbf{b}_{\mathfrak{n}_{j}}c_j l_j t_{k_2+i}}{b_{1}b_{2 } \cdots b_{k_2+i}}\right|<\frac{N^{\mathbf{s}_{{j_1}}+1-\mathbf{s}_{k_2+i}}}{N+1}
\leq\frac{N^{\mathbf{s}_{{j_2}}-\mathbf{s}_{k_2+i}}}{N+1}
=\frac{N}{N^{\omega }(N+1)}.$$  By \eqref{eq(4.3-3-1-1--3)}, there exist  $z_0\in\mathbb{Z}$, $a_1\in\{1,2,\cdots,N-1\}$ and $a_m\in\{0,1,2,\cdots,N-1\}$ with $2\leq m \leq\omega$ such that $$\sum_{j\in\Omega_2}\frac{ N^{\mathbf{s}_j} \mathbf{b}_{\mathfrak{n}_{j}}c_j l_j t_{k_2+i}}{b_{1}b_{2 } \cdots b_{k_2+i}}=z_0+ \frac{a_{\omega}}{N}+\frac{a_1+a_2N+a_3N^2+\cdots+a_{\omega-1}N^{\omega-2}}{ N^{\omega}}.$$
By some simple calculations, we have
\begin{equation} \label{eq(4.3-3-1-1--2)}
z_0+ \frac{a_{\omega}}{N}+\frac{1}{ N^{\omega}}  \leq \sum_{j\in\Omega_2}\frac{ N^{\mathbf{s}_j} \mathbf{b}_{\mathfrak{n}_{j}}c_j l_j t_{k_2+i}}{b_{1}b_{2 } \cdots b_{k_2+i}} <z_0+ \frac{a_{\omega}}{N}+ \frac{N^{\omega-1}-1}{ N^{\omega}}.
\end{equation}
According to (I) and (II), we obtain $|\sum_{j\in\Omega_1}\frac{ N^{\mathbf{s}_j} \mathbf{b}_{\mathfrak{n}_{j}}c_j l_j t_{k_2+i}}{b_{1}b_{2 } \cdots b_{k_2+i}}|<\frac{N}{N^{\omega}(N+1)}$, which shows \begin{align*}
z_0+\frac{a_{\omega}}{N}+\frac{1}{N^{\omega}(N+1)} \leq\sum_{j\in\Omega_1}\frac{ N^{\mathbf{s}_j} \mathbf{b}_{\mathfrak{n}_{j}}c_j l_j t_{k_2+i}}{b_{1}b_{2 } \cdots b_{k_2+i}} + \sum_{j\in\Omega_2}\frac{ N^{\mathbf{s}_j} \mathbf{b}_{\mathfrak{n}_{j}}c_j l_j t_{k_2+i}}{b_{1}b_{2 } \cdots b_{k_2+i}}\leq z_0+\frac{a_{\omega}+1}{N}-\frac{1}{N^{\omega}(N+1)}.
\end{align*}
Hence, \begin{equation}\label{X1}
\begin{split}
\frac{a_{\omega}}{N}+\frac{1}{N^{\omega}(N+1)} &\leq\sum_{j\in\Omega_1}\frac{ N^{\mathbf{s}_j} \mathbf{b}_{\mathfrak{n}_{j}}c_j l_j t_{k_2+i}}{b_{1}b_{2 } \cdots b_{k_2+i}} + \sum_{j\in\Omega_2}\frac{ N^{\mathbf{s}_j} \mathbf{b}_{\mathfrak{n}_{j}}c_j l_j t_{k_2+i}}{b_{1}b_{2 } \cdots b_{k_2+i}}-z_0
 \leq\frac{a_{\omega}+1}{N}-\frac{1}{N^{\omega}(N+1)}.
\end{split}
\end{equation}
Let $\beta$ be given in Lemma \ref{lem(4.4-1)} (i), we have $k_2+1< j_2+\beta$. Otherwise, by Lemma \ref{lem(4.4-1)} (i), we have $\max\{t'_n:n\geq 1\}\mathbf{b}_{\mathfrak{n}_{j_2}}<\mathbf{b}_{k_2+1}$. On the other hand,   $\max\{t_{k_2+i}': 1\leq i\leq\alpha\}\mathbf{b}_{\mathfrak{n}_{j_2}}\geq \mathbf{b}_{k_2+1}$ since $j_2\in\Omega_2$, which is a contradiction. Hence, $k_2+1< j_2+\beta$.
 It follows from that Lemma \ref{lem(4.4-1)} (ii),
 \begin{align*}
 \omega=
 \mathbf{s}_{k_2+i}+1-\mathbf{s}_{j_2}&=\tau( b_1b_2\cdots b_{k_2+i} )-\tau (t_{k_2+i})-\tau ( b_1b_2\cdots b_{j_2} )+\tau(t_{j_2})+1
 \\&\leq
 \tau( b_{j_2+1}b_{j_2+2}\cdots b_{k_2+i} )+\tau(t_{j_2})+1
\\&\leq (k_2-j_2+i+1)\max\{\tau(t_n): n\geq1\}+1
\\&\leq (\alpha+\beta)\max\{\tau(t_n): n\geq1\}+1:=\bf{\kappa_0}.
\end{align*}
Let
$$
W=\sum_{j\in\Omega_1}\frac{ N^{\mathbf{s}_j} \mathbf{b}_{\mathfrak{n}_{j}}c_j l_j t_{k_2+i}}{b_{1}b_{2 } \cdots b_{k_2+i}} + \sum_{j\in\Omega_2}\frac{ N^{\mathbf{s}_j} \mathbf{b}_{\mathfrak{n}_{j}}c_j l_j t_{k_2+i}}{b_{1}b_{2 }\cdots b_{k_2+i}}.
$$
According to \eqref{X1}, $$
W-z_0\in \left[\frac{a_{\omega}}{N}+\frac{1}{N^{\omega}(N+1)},\frac{a_{\omega}+1}{N}-\frac{1}{N^{\omega}(N+1)}\right].
$$
If $0\leq a_{\omega}\leq N-2$, for any $
l\in\mathcal{Z}(\hat{\delta}_{\{0,1,\cdots,N-1\}})=\frac{\mathbb{Z}\setminus N\mathbb{Z}}{N} $, we have
\begin{align*}
|W-l|=\left|W-z_0-(l-z_0)\right|&\geq \Big||l-z_0|-|W-z_0|\Big| \\
&\geq|l-z_0|-\left(\frac{a_{\omega}+1}{N}-\frac{1}{N^{\omega}(N+1)}\right)\\
&\geq\frac{1}{N^{\omega}(N+1)} \ \ (\text{ take} \ \ l=z_0+\frac{a_{\omega}+1}{N} )\\
&\geq \frac{1}{N^{\bf{\kappa_0}}(N+1)}.
\end{align*}
If $a_{\omega}=N-1$, for any $
l\in\mathcal{Z}(\hat{\delta}_{\{0,1,\cdots,N-1\}})=\frac{\mathbb{Z}\setminus N\mathbb{Z}}{N} $, we have
\begin{align*}
|W-l|=\left|W-z_0-(l-z_0)\right|&\geq \Big||l-z_0|-|W-z_0|\Big|\\
&\geq\frac{1}{N^{\omega}(N+1)}\ \ (\text{take} \ \ l=1-\frac{1}{N}+z_0 )\\
&\geq \frac{1}{N^{\bf{\kappa_0}}(N+1)}.
\end{align*}
These show that  $$\min\left\{\left|\sum_{j\in\Omega_1}\frac{ N^{\mathbf{s}_j} \mathbf{b}_{\mathfrak{n}_{j}}c_j l_j t_{k_2+i}}{b_{1}b_{2 } \cdots b_{k_2+i}} + \sum_{j\in\Omega_2}\frac{ N^{\mathbf{s}_j} \mathbf{b}_{\mathfrak{n}_{j}}c_j l_j t_{k_2+i}}{b_{1}b_{2 } \cdots b_{k_2+i}}-l\right|:l\in \mathcal{Z}(\hat{\delta}_{\{0,1,\cdots,N-1\}})\right\}
\geq\frac{1}{N^{ \bf{\kappa_0}  }(N+1)}.$$
Since $\bf{\kappa_0}$ is a fixed constant, there exists $\epsilon_{i_1}>0$ such that  \begin{align}\label{eq(4.3-3-1-1--5)}\left| \hat{\delta}_{\{0,1, \cdots, N-1\}}\left(\frac{\sum_{j\in\Omega_1\cup\Omega_2} N^{\mathbf{s}_j} \mathbf{b}_{\mathfrak{n}_{j}}c_j l_j t_{k_2+i}}{b_{1}b_{2 } \cdots b_{k_2+i}} \right) \right| >\epsilon_{i_1}.
\end{align}

Let $  \tilde{\epsilon }:=\min\{\epsilon_{i_0}, \epsilon_{i_1}\}$. Note that $\tilde{\epsilon }$  is not dependent on $i$, $k_1$ and $k_2$. According to \eqref{eq(4.3-3-1-1--4)}, \eqref{eq(4.3-3-1-1--5)} and the analysis of (I)-(III), we conclude that
  $$\left| \hat{\delta}_{\{0,1, \cdots, N-1\}}\left(\frac{\lambda t_{k_{2+i}}}{b_{1}b_{2 } \cdots b_{k_{2+i}} }\right) \right| >  \tilde{\epsilon}$$
for any $\lambda\in\mathcal{B}_{k_1,k_2}$, and the proof of Claim 3 is complete.
\end{proof}
It follows from Claim 3 that $\prod_{i=1}^{\alpha} | \hat{\delta}_{b_{k_2 +1}^{-1}b_{k_2 +2}^{-1} \cdots b_{k_2 +i}^{-1}D_{k_{2}+i}} (\frac{\lambda}{b_{1}b_{2 } \cdots b_{k_2} } ) | >\tilde{\epsilon }^{\alpha}:=\epsilon_0.$
Combining this with Proposition \ref{prop(4.1)}, we deduce that the proof is complete.
\end{proof}
Having established the above preparations, we can now prove  Case II.
\begin{thm}\label{th(1.4_2)}
 Under the assumption of Theorem \ref{th(1.4)}, suppose that $\mathbf{s}_i\neq\mathbf{s}_j$ for all $i\neq j$ and there exists $k_0\geq m_0$ such that $\min\{\mathbf{s}_j : j> k\}< \max\{\mathbf{s}_k : j\leq k\}$ for all $k\geq k_0$,
then $\mu_{\{b_k\},\{D_k\}}$ is a spectral measure.
\end{thm}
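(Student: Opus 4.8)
The plan is to produce, exactly as in the proof of Theorem \ref{th(1.4_1)}, a strictly increasing sequence $\{k_n\}_{n\geq1}$ with $k_1\geq k_0$ together with nested sets $\Lambda_n\subset\Lambda_{n+1}$ of the form prescribed by \eqref{eq(4.3-2)}, so that each $\Lambda_n$ is a spectrum of $\mu_{k_n}$ via Proposition \ref{prop(4.2.1)}, and so that the uniform lower bound $\bigl|\hat{\nu}_{>k_n}(\lambda/(b_1\cdots b_{k_n}))\bigr|\geq\varepsilon_0$ required by Theorem \ref{prop(4.2.2)} holds for every $\lambda\in\Lambda_n$; the conclusion then follows from Theorem \ref{prop(4.2.2)}. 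The essential difference from Case I lies in the size of the admissible shifts. In Case I the hypothesis forced $\max\{\mathfrak{n}_j:j\leq k_n\}\leq k_n$, so condition (ii) of \eqref{eq(4.3-2)} held with $m_{k_n}=k_n$ and the equi-positivity of Proposition \ref{prop(4.2)} could be applied to $\nu_{>k_n}$ directly. In Case II one only knows $\mathfrak{n}_j-j\leq\alpha$ from Lemma \ref{lem(4.1-1)}, hence $\max\{\mathfrak{n}_j:j\leq k_n\}\leq k_n+\alpha$, and I shall take $m_{k_n}=k_n+\alpha$. Consequently the integer shifts enter at the scale $b_1\cdots b_{k_n+\alpha}$ and can no longer be used to correct the first $\alpha$ factors of $\hat{\nu}_{>k_n}$.

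The device that separates the two effects is the factorization
$$\hat{\nu}_{>k}(x)=\left(\prod_{i=1}^{\alpha}\hat{\delta}_{b_{k+1}^{-1}\cdots b_{k+i}^{-1}D_{k+i}}(x)\right)\hat{\nu}_{>k+\alpha}\left(\frac{x}{b_{k+1}\cdots b_{k+\alpha}}\right).$$
Writing a typical element of $\Lambda_n$ as $\lambda=\lambda_{<n}+\mu_n+b_1\cdots b_{k_n+\alpha}\,z$ with $\lambda_{<n}\in\Lambda_{n-1}$, $\mu_n\in\mathcal{B}_{k_{n-1},k_n}$ and $z\in\mathbb{Z}$, the shift term contributes an integer to each argument $x\,t_{k_n+i}/(b_{k_n+1}\cdots b_{k_n+i})$ of the first $\alpha$ factors, in which $\hat{\delta}_{\{0,\dots,N-1\}}$ is $1$-periodic; hence those factors depend only on $\mu_n/(b_1\cdots b_{k_n})$ up to the perturbation $\lambda_{<n}/(b_1\cdots b_{k_n})$. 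I therefore choose the coefficients $c_j$ of the block $\mathcal{B}_{k_{n-1},k_n}$ by Proposition \ref{prop(4.4.2)}, which makes this product exceed $\epsilon_0$ for all perturbations in $[-\vartheta_0,\vartheta_0]$, and I choose the integer $z=\mathbf{k}_{n,\mu_n}$ (absorbing the integer part of $\mu_n/(b_1\cdots b_{k_n+\alpha})$) by applying the equi-positive estimate of Proposition \ref{prop(4.2)} to $\nu_{>k_n+\alpha}$, which is legitimate since $k_n+\alpha\geq m_0$, so that $|\hat{\nu}_{>k_n+\alpha}(\cdot)|>C$. Multiplying the two bounds yields $\bigl|\hat{\nu}_{>k_n}(\lambda/(b_1\cdots b_{k_n}))\bigr|>\epsilon_0 C=:\varepsilon_0$.

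It remains to organize the induction. Having built the finite set $\Lambda_{n-1}$, I pick $k_n$ so large that $\lambda_{<n}/(b_1\cdots b_{k_n})\in[-\vartheta_0,\vartheta_0]$ and $\lambda_{<n}/(b_1\cdots b_{k_n+\alpha})\in[-\theta_0,\theta_0]$ for every $\lambda_{<n}\in\Lambda_{n-1}$, which is what permits both windows to be used above. The three requirements of \eqref{eq(4.3-2)} are then met: (i) holds because $\mathbf{k}_{k,0}=0$ in Proposition \ref{prop(4.2)} forces $z_0=0$; (ii) holds because $m_{k_n}=k_n+\alpha\geq\max\{\mathfrak{n}_j:j\leq k_n\}$ by Lemma \ref{lem(4.1-1)}; and (iii) holds because the coefficients $c_j$ delivered by Proposition \ref{prop(4.4.2)} lie in $\mathbb{Z}\setminus N\mathbb{Z}$. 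Thus Proposition \ref{prop(4.2.1)} shows each $\Lambda_n$ is a spectrum of $\mu_{k_n}$ with $\Lambda_n\subset\Lambda_{n+1}$, and Theorem \ref{prop(4.2.2)} gives that $\Lambda=\bigcup_{n}\Lambda_n$ is a spectrum of $\mu_{\{b_k\},\{D_k\}}$. The first block warrants a brief remark, since Proposition \ref{prop(4.4.2)} is stated for a left endpoint $\geq k_0$: taking $k_1$ large, the finitely many indices $j\leq k_0$ have $\mathbf{b}_{\mathfrak{n}_j}$ bounded, so they fall into the set $\Omega_1$ of that proposition and are controlled by the identical estimate, which extends the argument to the left endpoint $0$.

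The main obstacle is precisely the step already resolved by Proposition \ref{prop(4.4.2)} and having no analogue in Case I: because the admissible shifts act only beyond index $k_n+\alpha$, the zeros of the first $\alpha$ factors of $\hat{\nu}_{>k_n}$ produced by the indices $j\leq k_n$ with $\mathfrak{n}_j>k_n$ cannot be moved and must instead be \emph{avoided} by the single choice of signs/divisors $c_j$, yielding a lower bound $\epsilon_0$ that is uniform in $n$ and in the block length. Once that uniform bound is secured, the remaining work—bookkeeping the two perturbation windows $\theta_0,\vartheta_0$ and checking (i)--(iii)—is routine.
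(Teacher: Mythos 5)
Your proposal is correct and follows essentially the same route as the paper's proof: the same factorization of $\hat{\nu}_{>k_n}$ into the first $\alpha$ factors (controlled uniformly by the choice of $c_j$ from Proposition \ref{prop(4.4.2)}) and the tail $\hat{\nu}_{>k_n+\alpha}$ (controlled by the equi-positivity of Proposition \ref{prop(4.2)} with shifts at scale $b_1\cdots b_{k_n+\alpha}$, i.e.\ $m_{k_n}=k_n+\alpha$), the same two perturbation windows $\theta_0,\vartheta_0$, and the same conclusion via Propositions \ref{prop(4.2.1)} and Theorem \ref{prop(4.2.2)} with the uniform bound $C\epsilon_0$. The only cosmetic difference is your remark on the initial block; the paper simply absorbs $\Lambda_{0,k_0}$ into the perturbation window by choosing $k_1$ large, which your construction also does.
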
 \begin{proof}
 Let $ \Lambda_{0,k_{0}}=\bigoplus_{j=1}^{k_{0}} (N^{\mathbf{s}_j} \mathbf{b}_{\mathfrak{n}_{j}} \{0, 1, 2, \cdots, N-1\} )$ and let $\sigma_0=\min\{\theta_0,\vartheta_0\}$,  where $\theta_0$ and $\vartheta_0$ are given by  Proposition \ref{prop(4.2)} and \ref{prop(4.4.2)}, respectively.
Making  $k_{1}> k_{0} $ satisfy
$( b_1b_2\cdots b_{k_{1}})^{-1}\Lambda_{0,k_{0}}\subset[ -\sigma_0, \sigma_0]$. According to Proposition \ref{prop(4.4.2)}, we can choose a appropriate $\mathcal{B}_{k_0,k_1}:=\bigoplus_{j=k_0+1}^{k_{1}}\left(N^{\mathbf{s}_j} \mathbf{b}_{\mathfrak{n}_{j}}c_{ j } \{0, 1, 2, \cdots, N-1\}\right)$ with $c_j\in\mathbb{Z}\setminus N\mathbb{Z}$ to make
 \begin{align}\label{eq(4.3-3-1-1--6-1)}
 \prod_{i=1}^{\alpha} \left| \hat{\delta}_{b_{k_{1} +1}^{-1}b_{k_{1} +2}^{-1} \cdots b_{k_{1} +i}^{-1}D_{k_{1}+i}}\left(\frac{\lambda_0+\lambda_1}{b_{1}b_{2 } \cdots b_{k_{1}} }\right) \right|>\epsilon_{0}.
 \end{align}
 for any $\lambda_0\in \Lambda_{0,k_{0}}$ and $\lambda_1\in\mathcal{B}_{k_0,k_{1}}$. It follows from Proposition \ref{prop(4.2)} that for any $\lambda_1\in\mathcal{B}_{k_0,k_{1}}$,  there exists an integer $\mathbf{k}_{1, \lambda_1}$ such that
\begin{align}\label{eq(4.3-3-1-1--7)}
|\hat{\nu}_{>(k_{1}+\alpha)}( \frac{\lambda_0+\lambda_1}{b_1b_2\cdots b_{k_{1}+\alpha}}+\mathbf{k}_{1, \lambda_1})|>C
\end{align}
 for any $\lambda_0\in \Lambda_{0,k_{0}}$, where $\mathbf{k}_{1,0}=0$.
Let
$\Lambda_{k_0,k_{1}}=\bigcup_{\lambda_1\in \mathcal{B}_{k_0,k_{1}}}(\lambda_1+b_1b_2\cdots b_{k_{1}+\alpha} \mathbf{k}_{1, \lambda_1} )$ and $\Lambda_{1}:=\Lambda_{0,k_{0}}+\Lambda_{k_0,k_{1}} $. This means that $\Lambda_{0, k_{0}}$ and  $\Lambda_{k_0, k_{1}}$ satisfy  $(i)-(iii)$ of \eqref{eq(4.3-2)}.
From Proposition \ref{prop(4.2.1)}, \eqref{eq(4.3-3-1-1--6-1)} and \eqref{eq(4.3-3-1-1--7)}, we obtain  $\Lambda_{1}$ is a spectrum of $\mu_{k_{1}}$ and
  \begin{align*}
\left|\hat{\nu}_{>k_{1}}\left( \frac{\lambda}{b_1b_2\cdots b_{k_{1}}}\right)\right|&=\prod_{i=1}^{\alpha} \left| \hat{\delta}_{b_{k_{1} +1}^{-1}b_{k_{1} +2}^{-1} \cdots b_{k_{1} +i}^{-1}D_{k_{1}+i}}\left(\frac{\lambda}{b_{1}b_{2 } \cdots b_{k_{1}} }\right) \right|\left|\hat{\nu}_{>(k_{1}+\alpha)}\left( \frac{\lambda}{b_1b_2\cdots b_{k_{1}+\alpha}}\right)\right|
\\&>C\epsilon_0
\end{align*}
for any $ \lambda\in\Lambda_{1}$.

Let $k_{2}$ be a positive integer that satisfy $k_{2}>k_{1}$ and
$( b_1b_2\cdots b_{k_{2}})^{-1}\Lambda_{1}\subset[ -\sigma_0, \sigma_0]$.
Similarly, we can choose a appropriate $\mathcal{B}_{k_1,k_2}:=\bigoplus_{j=k_1+1}^{k_{2}}\left(N^{\mathbf{s}_j} \mathbf{b}_{\mathfrak{n}_{j}}c_{ j } \{0, 1, 2, \cdots, N-1\}\right)$ with $c_j\in\mathbb{Z}\setminus N\mathbb{Z}$ to make
 \begin{align}\label{eq(4.3-3-1-1--6)}
 \prod_{i=1}^{\alpha} \left| \hat{\delta}_{b_{k_{2} +1}^{-1}b_{k_{2} +2}^{-1} \cdots b_{k_{2} +i}^{-1}D_{k_{2}+i}}\left(\frac{\lambda_1+\lambda_2}{b_{1}b_{2 } \cdots b_{k_{2}} }\right) \right|>\epsilon_{0}.
 \end{align}
 for any  $\lambda_1\in \Lambda_{1}$ and $\lambda_2\in\mathcal{B}_{k_1,k_{2}}$. And for any $\lambda_2\in\mathcal{B}_{k_{1},k_{2}}$,  there exists  an integer $\mathbf{k}_{2,\lambda_2}$ such that
$$
|\hat{\nu}_{>k_{2}+\alpha}( \frac{\lambda_1+\lambda_2}{b_1b_2\cdots b_{k_{2}+\alpha}}+\mathbf{k}_{2, \lambda_2})|>C
$$
for any $\lambda_1\in \Lambda_{1}$, where $\mathbf{k}_{2, 0}=0$. Let $ \Lambda_{k_{1}, k_{2}}=\bigcup_{\lambda\in \mathcal{B}_{k_{1}, k_{2}}}(\lambda+b_1b_2\cdots b_{k_{2}+\alpha} \mathbf{k}_{{2}, \lambda})$  and  $ \Lambda_{2}= \Lambda_{1}+ \Lambda_{k_{1}, k_{2}}$. By Proposition \ref{prop(4.2.1)},  we have $\Lambda_{2}$ is a spectrum of $\mu_{k_{2}}$ and $ \Lambda_{1}\subset\Lambda_{2}$.
Moreover,
\begin{align*}
 \left|\hat{\nu}_{>k_{2}}( \frac{\lambda}{b_1b_2\cdots b_{k_{2}}})\right|&=\prod_{i=1}^{\alpha} \left| \hat{\delta}_{b_{k_{1} +1}^{-1}b_{k_{2} +2}^{-1} \cdots b_{k_{1} +i}^{-1}D_{k_{2}+i}}\left(\frac{\lambda}{b_{1}b_{2 } \cdots b_{k_{2}} }\right) \right| \left|\hat{\nu}_{>(k_{2}+\alpha)}\left( \frac{\lambda}{b_1b_2\cdots b_{k_{2}+\alpha}} \right)\right|
\\&>C\epsilon_0
\end{align*}
for any $ \lambda\in\Lambda_{2}$.

Repeat this operation,  we can find a strictly  increasing sequence $\{k_{i}\}_{i=1}^{\infty}$ such that for any $i\geq1$, the following three statements hold: (i) $\Lambda_{i}\subset \Lambda_{i+1}$ ; (ii) $\Lambda_{i}$ is a spectrum of $\mu_{k_{i}}$; (iii)
$
|\hat{\nu}_{>k_{i}}( \frac{\lambda}{b_1b_2\cdots b_{k_{i}}})|>C\epsilon_0
$
for any $ \lambda\in\Lambda_{i}$.
Combining this with Theorem \ref{prop(4.2.2)}, we have
$\Lambda=\bigcup_{i=1}^{\infty}\Lambda_{i}$ is a spectrum of $\mu_{\{b_k\},\{D_k\}}.$ Thus the proof follows.
\end{proof}
\begin{proof}[\bf Proof of  Theorem \ref{th(1.4)}]
 $``(i)\Longrightarrow (ii)\Longleftrightarrow (iii)"$ is obtained directly from  Theorems  \ref{th(1.2-1)} and \ref{th(1.2)}.

 $``(ii)\Longrightarrow (i)"$  can be  derived from  Theorems \ref{th(1.4_1)} and \ref{th(1.4_2)}.
\end{proof}

At the end of this paper, we give the following two examples to show that the Case I and Case II do exist respectively.
\begin{exam}{\rm
Let $D_{2k-1}=\{0,1\}$, $D_{2k}=\{0,1\}4$ and $b_{k}=18$ for all $k\geq 1$. It is easy to  verify that  $ \mathbf{s}_{2k-1}=2(k-1)$, $ \mathbf{s}_{2k }=2(k-1)-1$ for all $k\geq 1$ and $\min\{\mathbf{s}_j : j> k\}> \max\{\mathbf{s}_j : j\leq k\}$ for all $k\in 2\mathbb{N}^+$.
It follows from Theorem \ref{th(1.4_1)} that $\mu_{\{b_k\},\{D_k\}}$ is  a spectral measure.}
\end{exam}
\begin{exam}{\rm
Let $D_{2k-1}=\{0,1\}$, $D_{2k}=\{0,1\}16$ and $b_{k}=18$ for all $k\geq 1$. It is easy to  verify that  $ \mathbf{s}_{2k-1}=2(k-1)$, $ \mathbf{s}_{2k }=2(k-1)-3$ for all $k\geq 1$ and $\min\{\mathbf{s}_j : j> k\}< \max\{\mathbf{s}_j : j\leq k\}$ for all $k\geq 1$.
It follows from  Theorem \ref{th(1.4_2)} that $\mu_{\{b_k\},\{D_k\}}$ is a spectral measure.}
\end{exam}
\noindent \textbf{\bf Conflict of interest.} We declare that we do not have any commercial or associative interest that represents a conflict of interest in connection with the work submitted.

\end{document}